\newtheorem{thm}{Theorem}[section]
\newtheorem{lmm}[thm]{Lemma}
\newtheorem{cor}[thm]{Corollary}
\theoremstyle{definition}
\newcommand{\ee}{\mathbb{E}}
\newcommand{\ma}{\mathcal{A}}
\newcommand{\mf}{\mathcal{F}}
\newcommand{\pp}{\mathbb{P}}
\newcommand{\ra}{\rightarrow}
\newcommand{\rr}{\mathbb{R}}
\newcommand{\smallavg}[1]{\langle #1 \rangle}
\newcommand{\xp}{X^\prime}
\newcommand{\zz}{\mathbb{Z}}
\newcommand{\mb}{\mathcal{B}}
\numberwithin{equation}{section}
\newcommand{\eq}[1]{\begin{align*} #1 \end{align*}}
\newcommand{\tv}{d_{\textup{TV}}}
\begin{document}
\title{Wilson loops in Ising lattice gauge theory}
\author{Sourav Chatterjee}
\address{\newline Department of Statistics \newline Stanford University\newline Sequoia Hall, 390 Serra Mall \newline Stanford, CA 94305\newline \newline \textup{\tt souravc@stanford.edu}}
\thanks{Research partially supported by NSF grant DMS-1608249}
\keywords{Lattice gauge theory, Yang--Mills theory, Wilson loop}
\subjclass[2010]{70S15, 81T13, 81T25, 82B20}

\begin{abstract}
Wilson loop expectation in 4D $\mathbb{Z}_2$ lattice gauge theory is computed to leading order in the weak coupling regime. This is the first example of a rigorous theoretical calculation of Wilson loop expectation in the weak coupling regime of a 4D lattice gauge theory. All prior results are either inequalities or strong coupling expansions.
\end{abstract}

\maketitle

\tableofcontents

\section{Introduction}\label{isingsec}
Euclidean Yang--Mills theories are models of random connections on principal bundles that arise in quantum field theory. Lattice gauge theories are discrete approximations of Euclidean Yang--Mills theories.  A lattice gauge theory is characterized by its gauge group, and a parameter called the coupling constant. When the value of the coupling constant is small, we say that the theory is in the weak coupling regime, and when it is large, we say that the theory is in the strong coupling regime.

Wilson loop expectations are key quantities of interest in the study of lattice gauge theories, which represent discrete approximations of the integrals along curves of the random connections from Euclidean Yang--Mills theories.  Rigorous mathematical calculation of Wilson loop expectations is still mostly out of reach in dimension four, which is the dimension of greatest importance since spacetime is four-dimensional. The only cases where some rigorous approximations exist in 4D are strongly coupled theories, which are amenable to series expansions~\cite{chatterjee15, cj16, bg16, jafarov16}. In the weak coupling regime, only upper and lower bounds  are known~\cite{guth80, frohlichspencer82}. This paper gives a first-order approximation for Wilson loop expectations in weakly coupled 4D lattice gauge theory with gauge group $\zz_2$, also known as Ising lattice gauge theory. This is possibly the first instance of an explicit calculation of Wilson loop expectations in the weak coupling regime of any four-dimensional lattice gauge theory. (For a general introduction to open problems in lattice gauge theories, see~\cite{chatterjee18}.)

\subsection{Main result}
Let us begin with the definition of 4D Ising lattice gauge theory. A nice feature of the theory is that it is quite simple to describe as a model of statistical mechanics. For each $N$, let 
\[
B_N := [-N,N]^4\cap \zz^4.
\]
Let $E_N$ be the set of undirected nearest-neighbor edges of $B_N$, and let 
\[
\Sigma_N := \{-1,1\}^{E_N}
\]
be the set of all configurations of $\pm1$-valued spins assigned to edges of $B_N$. A plaquette in $\zz^4$ is a square bounded by four edges. Let $P_N$ be the set of all plaquettes whose edges are in $E_N$. If $p\in P_N$ is a plaquette with edges $e_1,e_2,e_3,e_4$, and $\sigma\in \Sigma_N$, define
\begin{equation}\label{plaquettedef}
\sigma_p := \sigma_{e_1}\sigma_{e_2}\sigma_{e_3}\sigma_{e_4}. 
\end{equation}
Ising lattice gauge theory on $B_N$ with coupling constant $g$ and free boundary condition is the probability measure on $\Sigma_N$ with probability mass function proportional to $e^{-\beta H_N(\sigma)}$ 
where $\beta := 1/g^2$ and 
\begin{equation}\label{hamdef}
H_N(\sigma) := -\sum_{p\in P_N} \sigma_p.
\end{equation}
Although $\beta$ is the square of the inverse coupling constant, we will abuse terminology and refer to $\beta$ as the {\it inverse coupling strength} in the rest of the manuscript.

Ising lattice gauge theory was in fact the first lattice gauge theory  to be defined~\cite{wegner71}, a few years before the general definition of lattice gauge theories~\cite{wilson74}. It has been investigated by physicists as a toy model for understanding phase transitions in Yang--Mills theories \cite{balianetal, fradkinshenker79, brezindrouffe82}. A few rigorous results are also known~\cite{borgs84, mm79, druhlwagner82, fm83}. In particular, a careful study with generalizations to lattice gauge theories with arbitrary finite Abelian gauge groups was conducted in~\cite{borgs84}.

Let $\gamma$ be a loop in $B_N$, with edges $e_1,\ldots, e_m$. If no edge is repeated, we will say that $\gamma$ is a self-avoiding loop. We will say that two loops are disjoint if they do not share any common edge. A finite collection of disjoint self-avoiding loops will be called a generalized loop. The length of a generalized loop $\gamma$ is defined to be the number of edges in $\gamma$. 

Given a configuration $\sigma \in \Sigma_N$ and a generalized loop $\gamma$, the Wilson loop variable $W_\gamma$ is defined as
\begin{equation}\label{wilsondef}
W_\gamma := \prod_{e\in \gamma}\sigma_{e}.
\end{equation}
Our main object of interest is the expected value of $W_\gamma$ in the lattice gauge theory defined above. This expected value will be denoted by $\smallavg{W_\gamma}_{N,\beta}$. Note that here $W_\gamma$ is a $\pm1$-valued random variable, and so its expected value lies between $-1$ and $1$. 
If $\beta$ is large enough, it turns out that the limit
\begin{equation}\label{wilsonexplim}
\smallavg{W_\gamma}_\beta := \lim_{N\to\infty} \smallavg{W_\gamma}_{N,\beta}
\end{equation}
exists for any $\gamma$ and is translation-invariant. We will prove this in Section~\ref{infproofsec}. This allows us to talk about Wilson loop expectations in Ising lattice gauge theory on the full lattice instead of finite cubes. 

Given a  generalized loop $\gamma$, an edge $e$ in $\gamma$ will be called a corner edge if there is some other edge $e'\in \gamma$ such that $e$ and $e'$ share a common plaquette. For example, a rectangular loop with length and width greater than one has exactly eight corner edges. The main result of this paper, stated below, gives a first-order approximation for Wilson loop expectation of a generalized loop when $\beta$ is large and the fraction of corner edges is small. 
\begin{thm}\label{genthm}
There exists $\beta_0>0$ such that the following holds when $\beta\ge \beta_0$. Let $\gamma$ be a nonempty generalized loop in $\zz^4$. Let $\ell$ be the number of edges in $\gamma$ and let $\ell_0$ be the number of corner edges of $\gamma$. Then 
\[
|\smallavg{W_\gamma}_\beta -e^{-2\ell e^{-12\beta}}|\le C_1\biggl(e^{-2\beta}+\sqrt{\frac{\ell_0}{\ell}}\biggr)^{C_2},
\]
where $C_1$ and $C_2$ are two positive universal constants. 
\end{thm}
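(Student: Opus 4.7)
The formula $e^{-2\ell e^{-12\beta}}$ is the product $\prod_{e\in\gamma}(1-2e^{-12\beta})$ to leading order. The exponent $12=2\cdot 6$ arises because each edge in $\mathbb{Z}^4$ is contained in six plaquettes, so flipping one edge from the all-$+1$ ground state raises the energy by $12$; at large $\beta$ each edge is therefore independently ``flipped'' with probability $\approx e^{-12\beta}$. If these flips were exactly independent the stated formula would hold. The plan is thus (i) to make this low-temperature picture rigorous via a Peierls-type expansion, and (ii) to quantify the failure of independence, which is strongest between edges of $\gamma$ that share a plaquette --- hence the $\ell_0$-dependence in the error.

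Because individual spins are not gauge-invariant (Elitzur's theorem forces $\langle\sigma_e\rangle=0$), I would first use the $\mathbb{Z}_2$ gauge symmetry $\sigma_e \to \tau_u\tau_v\sigma_e$ (with $\tau\in\{\pm 1\}^{V_N}$) to fix $\sigma_e=1$ along a spanning tree $T$ of $B_N$ chosen disjoint from $\gamma$. Gauge-fixed configurations are then in bijection with subsets $A\subseteq E_N\setminus T$ of flipped edges; the Hamiltonian becomes $H_N=-|P_N|+2d(A)$ with $d(A):=\#\{p\in P_N:|p\cap A|\text{ odd}\}$, and $W_\gamma=(-1)^{|A\cap\gamma|}$, so
\[
\langle W_\gamma\rangle_\beta \;=\; \frac{\sum_A (-1)^{|A\cap\gamma|}e^{-2\beta d(A)}}{\sum_A e^{-2\beta d(A)}},
\]
with the key combinatorial values $d(\{e\})=6$ and $d(\{e,e'\})=11$ when $e,e'$ share a plaquette (the ``corner'' case), versus $12$ otherwise.

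Declaring two flipped edges adjacent when they share a plaquette and decomposing $A$ into maximal connected polymers, the smallest polymer weight $e^{-12\beta}$ together with finite local combinatorics verifies a Kotecky--Preiss convergence criterion for $\beta\ge\beta_0$. The resulting cluster expansion yields
\[
\log\langle W_\gamma\rangle_\beta \;=\; \sum_C \phi(C)\bigl((-1)^{|C\cap\gamma|}-1\bigr),
\]
with $|\phi(C)|$ controlled by $e^{-2\beta d(C)}$ times combinatorial factors. Clusters disjoint from $\gamma$ cancel, as do clusters with $|C\cap\gamma|$ even; the $\ell$ single-edge clusters $\{e\}$ with $e\in\gamma$ each contribute $-2e^{-12\beta}$, producing the leading $-2\ell e^{-12\beta}$ and hence $e^{-2\ell e^{-12\beta}}$ on exponentiation.

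The remaining terms come from (a) corner-related clusters, contributing $O(\ell_0 e^{-22\beta})$ through the anomalously light corner-pair weight, and (b) a geometrically convergent polymer tail producing the additive $e^{-2\beta}$ piece. Converting these logarithmic corrections into an absolute bound via $|e^x-e^y|\le\max(e^x,e^y)|x-y|$ and balancing the $\ell_0$-term against the Lipschitz factor of $e^{-2\ell e^{-12\beta}}$ should yield the $\sqrt{\ell_0/\ell}$ scaling. The main obstacle will be carrying out the cluster expansion with enough combinatorial care to track the corner contributions precisely and uniformly in $N$ and in the geometry of $\gamma$, so as to recover the specific form $(e^{-2\beta}+\sqrt{\ell_0/\ell})^{C_2}$ rather than a looser bound that degrades as $\ell\to\infty$.
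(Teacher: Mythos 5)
Your gauge-fixing idea runs into a genuine convergence problem that makes the proposed edge-based cluster expansion ill-defined. After fixing $\sigma_e=1$ on a spanning tree $T$, consider the gauge-fixed configuration $A(e_0)\subseteq E_N\setminus T$ whose only negative plaquettes are the six around an edge $e_0\in T$: concretely, $A(e_0)$ is the edge cut (minus $e_0$) between the two components of $T\setminus\{e_0\}$. This set is connected in your plaquette-sharing sense (any nontrivial component would carry a nonempty closed dual surface, hence $\ge 6$ negative plaquettes, forcing a single component), it has polymer weight exactly $e^{-12\beta}$, and its cardinality grows without bound as $N\to\infty$. Moreover $A(e_0)$ contains a fixed edge $e_1\notin T$ precisely when $e_0$ lies on the $T$-fundamental cycle of $e_1$, and every spanning tree of $\zz^4$ has unboundedly long fundamental cycles. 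So the Kotecky--Preiss sum over polymers containing $e_1$ already contains $\sim L(e_1)$ terms of size $e^{-12\beta}$ with $L(e_1)\to\infty$, and cannot be bounded uniformly in $N$. This is the gauge-theory analogue of trying to run the Peierls argument for the 2D Ising model using minus-regions rather than contours: small-energy excitations near the gauge-fixing tree are represented by arbitrarily large edge sets. The expansion must instead be organized around gauge-invariant excitations, namely the vortices (closed connected surfaces of negative plaquettes in the dual lattice), for which a genuine Peierls/entropy bound holds. A smaller point: for two edges $e,e'$ sharing a plaquette one gets $d(\{e,e'\})=6+6-2=10$, not $11$; the shared plaquette contains both and flips back to positive.

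By contrast, the paper never gauge-fixes and never runs a cluster expansion. It transfers the weak-coupling model to a strongly coupled dual model via the Wegner-type duality (Theorems \ref{zpropthm}, \ref{exppropthm}), establishes exponential decay of correlations there by Dobrushin's criterion (Theorem \ref{decaythm}), and pulls this back to get the rarity estimate for negative plaquettes (Theorem \ref{rarity}). It then works directly with vortices in the primal picture: writing $W_\gamma=(-1)^{|Q\cap(\text{negative plaquettes})|}$ for a spanning surface $Q$, it shows via Lemma \ref{evenlmm} that vortices far from $\gamma$ contribute evenly, reduces to minimal vortices on non-corner edges of $\gamma$ (this is where $\ell_0$ enters), and finishes by conditioning to expose independence of the non-corner edge spins. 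The $\sqrt{\ell_0/\ell}$ in the final bound arises not from a corner energy count but from a Markov-type truncation of $\mu_\beta(\theta^{-|\gamma'|})$ combined with $\ell_0\ge 1$; your ``balancing against the Lipschitz factor'' heuristic does not point to this mechanism. If you want to keep the expansion philosophy, redo it with vortex polymers (for which the entropy bound is honest) and express $W_\gamma$ through a spanning surface $Q$ — that would be a legitimate alternative route, but it is no longer the gauge-fixed edge expansion you proposed.
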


For the convenience of the reader, let us interpret the above result. Suppose that $\beta$ is large and the fraction of corner edges in the loop $\gamma$ is small. Then the error bound is small. Thus, in this circumstance, if $\ell \ll e^{12\beta}$, then Theorem \ref{genthm} implies that $\smallavg{W_\gamma}_\beta\approx 1$ (which means that $W_\gamma$ is highly likely to be $1$). On the other hand, if $\ell \gg e^{12\beta}$, then we get $\smallavg{W_\gamma}_\beta\approx 0$ (which means that $W_\gamma$ is nearly equally likely to be $-1$ or $1$). Nontrivial behavior happens if and only if $\ell$ is like a constant multiple of  $e^{12\beta}$, and in that case $\smallavg{W_\gamma}_\beta\approx e^{-2\ell e^{-12\beta}}$.

Incidentally, it will be shown later (Theorem~\ref{exppropthm}) that Wilson loop expectations in 4D Ising lattice gauge theory are always nonnegative. In particular, they always lie in the interval $[0,1]$. This is not obvious, since the Wilson loop variables themselves take value in $\{-1,1\}$. The proof of Theorem \ref{exppropthm} is based on duality relations; I do not have an intuitive (or probabilistic) explanation for this positivity phenomenon. 

\subsection{Open problems}
It is mathematically quite interesting to understand higher order terms in the computation of $\smallavg{W_\gamma}_\beta$. In particular, this would shed light on what happens when $\ell\gg e^{12\beta}$ or $\ell\ll e^{12\beta}$ more precisely than what is given by Theorem~\ref{genthm}. 


A result like Theorem \ref{genthm} for non-Abelian lattice gauge theories with gauge groups such as $SU(2)$ or $SU(3)$ (and possibly with a different kind of approximation involving the area enclosed by $\gamma$) would be of great physical importance. In fact, such a result would be a significant step towards the solution of the Yang--Mills existence problem~\cite{jaffewitten, gj87, seiler82, chatterjee18}. 

The problem of extending Theorem \ref{genthm} to arbitrary finite gauge groups was posed as an open question in the original draft of this manuscript. At the time of preparing this revision, this problem has been solved by \citet{cao20} in a beautiful work that will soon be uploaded to arXiv. Cao's work covers both Abelian and non-Abelian finite groups. 

\subsection{Organization of the paper}
The rest of the paper is devoted to the proof of Theorem \ref{genthm}. The proof involves discrete exterior calculus, duality relations, and certain discrete geometrical objects called vortices. The basic framework of discrete exterior calculus is introduced in Section \ref{calcsec}. The notions of discrete surfaces and vortices are introduced in Section~\ref{strucsec}. Duality relations for Ising lattice gauge theory are derived in Section~\ref{dualsec}. Exponential decay of correlations in the weak coupling regime is proved in Section~\ref{corsec}. The nature of vortices is investigated in Section~\ref{vorsec}. Finally, the proof of Theorem~\ref{genthm} is completed in Section \ref{proofsec}.

\section{Discrete exterior calculus}\label{calcsec}
A key tool in this paper is exterior calculus for the cell complex of $\zz^n$. Although we need this tool only for $n=4$, the presentation in this section will be for general $n$. Discrete exterior calculus has been used in similar contexts in the past, for example in~\cite{frohlichspencer82} and~\cite{gross83}. An extensive survey of the methods and applications of discrete exterior calculus in duality relations for lattice models can be found in~\cite{druhlwagner82}. For the uninitiated reader, the basics are presented below. We also need a few results that may be hard to find verbatim in the literature, so those are derived here. 

\subsection{The cell complex of $\zz^n$}
Take any $n\ge 1$ and any $x\in \zz^n$. There are $n$ edges coming out of $x$ in the positive direction.  Let us denote these edges by $dx_1,\ldots, dx_n$. For each $1\le k\le n$ and $1\le i_1<i_2<\cdots<i_k\le n$, the edges $dx_{i_1},\ldots, dx_{i_k}$ define a positively oriented $k$-cell of $\zz^n$. (For example, a plaquette is a $2$-cell.)  We will denote this $k$-cell by the wedge product 
\[
dx_{i_1}\wedge dx_{i_2}\wedge \cdots \wedge dx_{i_k}.
\]
 A $0$-cell is simply a vertex. We will use $-dx_{i_1}\wedge \cdots \wedge dx_{i_k}$ to denote the negatively oriented version of $dx_{i_1}\wedge \cdots \wedge dx_{i_k}$. The collection of all oriented $k$-cells, as $k$ ranges from $0$ to $n$, is called the cell complex of $\zz^n$.

Now take any arbitrary $1\le i_1,\ldots,i_k\le n$. Following the usual conventions for wedge products, $dx_{i_1}\wedge\cdots\wedge dx_{i_k}$ is defined to be equal to zero if $i_1,\ldots, i_k$ are not all distinct, and to be equal to $(-1)^m dx_{j_1}\wedge \cdots \wedge dx_{j_k}$ otherwise, where $j_1,\ldots, j_k$ is the increasing rearrangement of $i_1,\ldots, i_k$ and $m$ is the sign of the permutation that takes $i_1,\ldots, i_k$ to $j_1,\ldots, j_k$. For example, $dx_1\wedge dx_1 = 0$ and $dx_2\wedge dx_1 = -dx_1\wedge dx_2$.

A cube $B$  in $\zz^n$ is a set of the form 
\[
([a_1,b_1]\times [a_2,b_2]\times \cdots \times [a_n, b_n])\cap \zz^n,
\] 
where the $a_i$'s and $b_i$'s are integers, such that $b_i-a_i$ is the same for each $i$. We will say that a $k$-cell $c$ is in $B$ if all the vertices of $c$ belong to~$B$. We will say that a  $k$-cell $c$ of $\zz^n$ belongs to the boundary of $B$ if every vertex of $c$ is a boundary vertex of $B$ and that $c$ is outside $B$ if at least one vertex of $c$ is outside $B$. If $c$ is in $B$ but not on the boundary of $B$, we will say that $c$ is an internal $k$-cell of $B$. 


\subsection{Differential forms on $\zz^n$}
Let $G$ be an Abelian group, with the group operation denoted by $+$, and the inverse of a group element $a$ denoted by $-a$. For simplicity, we will write $a-b$ instead of $a+(-b)$. A $G$-valued $k$-form on $\zz^n$ is a $G$-valued function on the set of positively oriented $k$-cells. A $G$-valued $k$-form $f$ will be  denoted by the formal expression
\[
f(x) = \sum_{1\le i_1<\cdots<i_k\le n} f_{i_1\cdots i_k}(x) dx_{i_1}\wedge \cdots \wedge dx_{i_k},
\]
where $f_{i_1\cdots i_k}(x)$ is the value assigned to the positively oriented $k$-cell $dx_{i_1}\wedge \cdots \wedge dx_{i_k}$. Note that a $0$-form is simply a $G$-valued function on $\zz^n$. For $k>n$ or $k<0$, there is only one $k$-form, which is denoted by $0$. If $c$ is a negatively oriented cell, $f(c)$ is defined as $-f(-c)$. 

If $B$ is a cube in $\zz^n$, then a $G$-valued $k$-form $f$ on $B$ is simply a function from the set of positively oriented $k$-cells of $B$ into $G$. 

Throughout the rest of this section, $G$ will denote an Abelian group, and $k$-forms will refer to $G$-valued $k$-forms.  Sometimes we will require $G$ to be finite. 


\subsection{Discrete exterior derivative}
For any function $h:\zz^n \to G$, any $x\in \zz^n$ and any $1\le i\le n$, define the discrete derivative of $h$ in direction $i$ at the point $x$ as
\[
\partial_i h(x) := h(x+e_i)-h(x),
\]
where $e_i$ is the vector that has $1$ in coordinate $i$ and $0$ in all other coordinates. For $0\le k\le n-1$, the exterior derivative of a $k$-form $f$ is the $(k+1)$-form
\begin{equation}
df(x)  :=  \sum_{1\le i_1<\cdots<i_k \le n} \sum_{1\le i\le n} \partial_i f_{i_1\cdots i_k}(x) dx_i \wedge dx_{i_1}\wedge \cdots\wedge dx_{i_k}.\label{dform}
\end{equation}
In other words, if $g = df$, then for any $1\le i_1<\cdots< i_{k+1}\le n$, 
\begin{equation}
g_{i_1\cdots i_{k+1}}(x) = \sum_{1\le j\le k+1} (-1)^{j-1} \partial_{i_j} f_{i_1\cdots \widehat{i_j}\cdots i_{k+1}}(x),
\label{gform}
\end{equation}
where $i_1\cdots \widehat{i_j}\cdots i_{k+1}$ is the list obtained by omitting $i_j$ from $i_1i_2\cdots i_{k+1}$. For example, if $n=4$ and 
\begin{equation}
f(x) = f_{12}(x) dx_1\wedge dx_2 + f_{13}(x) dx_1\wedge dx_3,\label{fex}
\end{equation}
then 
\begin{align*}
df(x) &= (\partial_3f_{12}(x) - \partial_2f_{13}(x)) dx_1\wedge dx_2 \wedge dx_3 \\
&\qquad+ \partial_4f_{12}(x) dx_1\wedge dx_2 \wedge dx_4  + \partial_4f_{13}(x) dx_1\wedge dx_3 \wedge dx_4.
\end{align*}
The exterior derivative of any $n$-form is zero, consistent with our convention that $0$ is the only $k$-form when $k> n$. A differential form $f$ is called closed if $df = 0$, and exact if $f = dg$ for some~$g$.

Let $B$ be a cube in $\zz^n$. If $f$ is a $k$-form on $B$, $df$ is defined the same way as before, using the formula \eqref{gform}.  This works because any $k$-cell that is contained in a $(k+1)$-cell of $B$ must be itself a $k$-cell of $B$.  Closed and exact forms on $B$ are defined as before.  

A basic result about the exterior derivative is the following lemma, which says that every exact form is closed. For completeness, the proof is provided.
\begin{lmm}\label{dlmm1}
For any $G$-valued differential form $f$, either on $\zz^n$ or on a cube $B$, $ddf = 0$.  
\end{lmm}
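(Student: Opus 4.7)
The plan is to expand $ddf$ in coordinates using the defining formula \eqref{gform} twice and then show that all terms cancel in pairs, the key input being the commutativity of the discrete partial derivatives.

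First I would set $g := df$ and apply \eqref{gform} once to obtain, for any strictly increasing tuple $1\le i_1 < \cdots < i_{k+2}\le n$,
\[
(ddf)_{i_1\cdots i_{k+2}}(x) = \sum_{j=1}^{k+2} (-1)^{j-1}\partial_{i_j} g_{i_1\cdots \widehat{i_j}\cdots i_{k+2}}(x),
\]
and then substitute the analogous expression for each $g_{i_1\cdots \widehat{i_j}\cdots i_{k+2}}$. This produces a double sum of terms of the form $\pm \partial_{i_j}\partial_{i_l} f_{i_1\cdots \widehat{i_j}\cdots \widehat{i_l}\cdots i_{k+2}}(x)$, one contribution for each ordered pair $(j,l)$ with $j\ne l$.

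Next I would record the trivial but essential fact that discrete partial derivatives commute: for any $G$-valued function $h$ on $\zz^n$ and any $i,p$,
\[
\partial_i\partial_p h(x) = h(x+e_i+e_p)-h(x+e_i)-h(x+e_p)+h(x),
\]
which is manifestly symmetric in $i$ and $p$. This allows me to group the $(j,l)$ and $(l,j)$ contributions together, since the differentiated functions agree.

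The remaining work is a sign calculation. For $j<l$: removing $i_j$ first leaves $i_l$ in position $l-1$ of the shortened tuple, giving a total sign of $(-1)^{j-1}(-1)^{(l-1)-1}=(-1)^{j+l-3}$; removing $i_l$ first leaves $i_j$ at its original position $j$, giving $(-1)^{l-1}(-1)^{j-1}=(-1)^{j+l-2}$. These two signs are opposite, so the two contributions cancel, and summing over unordered pairs $\{j,l\}$ shows that $(ddf)_{i_1\cdots i_{k+2}}(x)=0$. For a form on a cube $B$ the identical pointwise calculation applies, because (as the paper already observes) every $k$-cell contained in a $(k+1)$-cell of $B$ is itself a $k$-cell of $B$, so no boundary terms intervene. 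The main — really only — obstacle is the sign bookkeeping as two indices are removed in the two possible orders; past that, the argument is mechanical and mirrors the familiar smooth case.
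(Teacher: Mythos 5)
Your proof is correct and follows essentially the same strategy as the paper's: expand $ddf$ into a double sum, invoke commutativity of the discrete partial derivatives $\partial_i\partial_j = \partial_j\partial_i$, and observe that the $(j,l)$ and $(l,j)$ contributions cancel in pairs. The only difference is presentational — the paper works with the formal wedge expression~\eqref{dform}, letting the anticommutativity $dx_j\wedge dx_i = -dx_i\wedge dx_j$ absorb the sign bookkeeping, whereas you work directly with the coordinate formula~\eqref{gform} and compute the signs explicitly; both are valid and equivalent.
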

\begin{proof}
First suppose that $f$ is a differential form on $\zz^n$. 
If $f$ is a $k$-form for $k\ge n-1$, then $ddf =0$ automatically, so there is nothing to prove. Assume that $k< n-1$. Since the exterior derivative is a linear operator, the formula~\eqref{dform} implies that 
\eq{
ddf(x)  :=  \sum_{1\le i_1<\cdots<i_k \le n} \sum_{1\le i,j\le n}\partial_j\partial_i f_{i_1\cdots i_k}(x) dx_j\wedge dx_i \wedge dx_{i_1}\wedge \cdots\wedge dx_{i_k}.
}
Inside the sum, if $i=j$, the term is zero since $dx_i \wedge dx_i=0$. If $i\ne j$, the $(i,j)$-term cancels the $(j,i)$-term, since $dx_j \wedge dx_i = -dx_i \wedge dx_j$ and for any~$h$,
\eq{
\partial_j \partial_i h(x) &= \partial_i h(x+e_j)-\partial_i h(x)\\
&= (h(x+e_j+e_i)-h(x+e_j)) - (h(x+e_i)-h(x))\\
&= (h(x+e_j+e_i)-h(x+e_i)) - (h(x+e_j)-h(x))\\
&= \partial_i \partial_j h(x).
}
Thus, all terms cancel, showing that $ddf = 0$.

If $f$ is a differential form on a cube $B$, the same proof goes through. This is because any $k$-cell that is contained in a $(k+2)$-cell of $B$ must itself be a $k$-cell of~$B$. 
\end{proof}

\subsection{Discrete Poincar\'e lemma}
An important result about the exterior derivative is the Poincar\'e lemma. We will need a version of the classical Poincar\'e lemma in our discrete setting. Again, for completeness, the lemma is stated and proved below. Another purpose of giving a proof is that there are several fine points in the following statement that will important for us later, but are hard to find in off-the-shelf versions of this lemma.
\begin{lmm}[Poincar\'e lemma]\label{dlmm2}
Take any $1\le k\le n$. Let $G$ be an Abelian group. Let $B$ be any cube in $\zz^n$.  Then the exterior derivative $d$ is a surjective map from the set of $G$-valued  $(k-1)$-forms on $B$ onto the set of $G$-valued closed $k$-forms on $B$. Moreover, if $G$ is finite and $m$ is the number of closed $G$-valued $(k-1)$-forms on $B$, then this map is an $m$-to-$1$ correspondence. Lastly, if $1\le k\le n-1$ and $f$ is a closed $k$-form that vanishes on the boundary of $B$, then there is a $(k-1)$-form $h$ that also vanishes on the boundary of $B$ and $dh =f$. 
\end{lmm}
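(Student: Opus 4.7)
The plan is to establish surjectivity of $d$ (the first and main assertion) by induction on the ambient dimension $n$, peeling off one coordinate at a time. The base case $n = 1$ (forcing $k = 1$) is handled by telescoping: set $h$ equal to $0$ at one endpoint of $B$, and define $h$ on the remaining vertices so that successive differences match the prescribed edge values of $f$. For the inductive step, given a closed $k$-form $f$ on $B \subset \zz^n$, I split each wedge-monomial of $f$ according to whether it contains $dx_n$, writing $f = \alpha + dx_n \wedge \beta$ with $\alpha$ and $\beta$ not involving $dx_n$. For each $(k-1)$-index $J$ with $n \notin J$, I set
\[
h_J(x_1, \ldots, x_n) := \ve_J \sum_{j = 0}^{x_n - 1} f_{\{n\} \cup J}(x_1, \ldots, x_{n-1}, j)
\]
with an appropriate sign $\ve_J$, and take $h := \sum_J h_J \, dx_J$. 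By construction $\partial_n h_J$ matches $\beta_J$ up to sign, so $f - dh$ contains no wedge-monomial with $dx_n$. Since $f - dh$ is closed by Lemma~\ref{dlmm1}, evaluating $d(f - dh) = 0$ on the $(k+1)$-cells whose wedge-monomial does contain $dx_n$ forces every remaining coefficient of $f - dh$ to be independent of $x_n$. Thus $f - dh$ is pulled back from a closed $k$-form on the $(n-1)$-dimensional slab $B \cap \{x_n = 0\}$; the inductive hypothesis yields a primitive $g$ on this slab, which I extend to $B$ with no $x_n$-dependence to get $f = d(h + g)$. The case $k = n$ is a degenerate specialization: $\alpha = 0$ automatically, and the construction of $h$ alone already gives $f = dh$. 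The $m$-to-$1$ claim is then immediate, because two primitives of the same $k$-form differ by a closed $(k-1)$-form, so the fibers of $d$ are cosets of the finite group $\ker d$ of closed $(k-1)$-forms, each of size $m$.

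For the boundary-vanishing assertion I would first invoke the surjectivity already proved to obtain some primitive $h_0$ with $dh_0 = f$, and then correct $h_0$ by an exact form. The key observation is that $dh_0 = f = 0$ on every boundary $k$-cell and every face of a boundary cell is itself a boundary cell, so $h_0|_{\partial B}$ is a closed $(k-1)$-form on the subcomplex $\partial B$. When $2 \le k \le n - 1$, the combinatorial complex $\partial B$ (a cubulation of the $(n-1)$-sphere) has vanishing $(k-1)$-th cohomology, so $h_0|_{\partial B} = dg_0$ for some $(k-2)$-form $g_0$ on $\partial B$. Extending $g_0$ by $0$ to the interior cells of $B$ produces $\tilde g_0$ on $B$ for which $d\tilde g_0$ agrees with $dg_0$ on every boundary $(k-1)$-cell; hence $h := h_0 - d\tilde g_0$ satisfies $dh = f$ and $h|_{\partial B} = 0$. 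For $k = 1$ the statement degenerates pleasantly: $h_0|_{\partial B}$ is a closed $0$-form on the connected complex $\partial B$ (valid for $n \ge 2$), hence a constant $c$, and subtracting the constant $0$-form $c$ from $h_0$ gives the required primitive.

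The hard part is the Poincar\'e-type statement on $\partial B$ used in the previous step, since $\partial B$ is not a cube and the inductive hypothesis does not apply directly. I would establish it either by adapting the slab-reduction construction of part 1 to propagate boundary vanishing---using the hypothesis $f|_{\partial B} = 0$ to force the telescoped primitive to vanish on the top face $x_n = L$ as well as on the bottom $x_n = 0$, and then iterating on the other coordinates---or by an auxiliary induction that covers $\partial B$ by its $2n$ cubical faces, solves on each face by the inductive hypothesis, and glues the local primitives along their lower-dimensional intersections; either route demands careful bookkeeping of signs and compatibility on shared subfaces. The restriction $k \le n-1$ is unavoidable: for $k = n$, a discrete Stokes identity on the whole cube $B$ shows that any $h$ with $dh = f$ and $h|_{\partial B} = 0$ must satisfy $\sum_{p} f(p) = 0$ summed over all $n$-cells $p$, but this sum need not vanish merely because $f$ vanishes on boundary $n$-cells, so part 3 genuinely fails in that case.
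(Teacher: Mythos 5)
Your treatment of surjectivity and the $m$-to-$1$ claim is essentially the same as the paper's and is correct: both arguments split off the $dx_n$ part by a telescope (the paper builds the primitive $g$ directly with $g_{i_1\cdots i_{k-1}}=0$ whenever $i_{k-1}=n$, which is exactly your $f=\alpha+dx_n\wedge\beta$ split in disguise) and then reduce to the bottom slab $B_a$ by induction on $n$ with $k$ fixed, the base case being the degenerate $n=k$. The $m$-to-$1$ step, via cosets of the kernel, is identical.

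Part 3 has a genuine gap, which you flag yourself: you reduce the boundary-vanishing assertion to a Poincar\'e-type statement on the subcomplex $\partial B$ (a cubulation of $S^{n-1}$), but $\partial B$ is not a cube, so the inductive hypothesis does not apply, and neither of your two sketched fixes is carried out. The paper sidesteps $\partial B$ entirely by \emph{not} starting from an arbitrary primitive $h_0$. It reuses the explicit construction from part 1: since $f$ vanishes on $B_a$, one may take $g=0$ on $B_a$ in the first step, and then the telescope \eqref{gdef} together with $f|_{\partial B}=0$ forces $g$ to vanish on every $(n-1)$-face of $B$ \emph{except} possibly the top face $B_b$. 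On $B_b$, one has $dg=f=0$ and $g=0$ on $\partial B_b$, so part 3 of the lemma at dimension $n-1$ and degree $k-1$ (the degree drops, which is why the range $1\le k\le n-1$ is exactly right) yields $g|_{B_b}=dw$ with $w=0$ on $\partial B_b$; extending $w$ by zero and putting $h=g-dw$ finishes. Your first suggested repair---propagating boundary vanishing through the telescope---is gesturing at this, but by insisting on correcting a generic $h_0$ you are forced into a cohomology-of-$\partial B$ statement that you then have to prove separately. Working with the canonical primitive makes the correction a single inductive call on the $(n-1)$-cube $B_b$, not on the sphere $\partial B$, and that is the missing idea.
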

\begin{proof}
Let $\ma$ be the set of $G$-valued  $(k-1)$-forms on $B$, and let $\mb$ be the set of $G$-valued closed $k$-forms on $B$. 
By Lemma \ref{dlmm1}, $d$ maps $\ma$ into $\mb$. 
We will first prove that for every $f\in \mb$ there exists $g\in\ma$ such that $dg = f$. 
This claim will be proved by induction on $n$, fixing $k$. Note that the induction starts from $n=k$. We will define $g$ below, in several steps. In each step, the cases $n=k$ and $n>k$ are treated separately. When treating the case $n>k$, we will assume that the lemma has already been proved for smaller values of $n$.

For simplicity of notation, let us assume without loss of generality that $B = [a,b]^n \cap \zz^n$, where $a<b$ are two integers.   For each $a\le r\le b$, let 
\[
B_r := B \cap (\zz^{n-1}\times\{r\}).
\]
Let us now define the required $(k-1)$-form $g$ on $B$. We will define $g_{i_1\cdots i_{k-1}}(x)$ for every $x\in B$ and $1\le i_1<\cdots < i_{k-1}\le n$. In some cases, $dx_{i_1}\wedge \cdots \wedge dx_{i_{k-1}}$ may not be a $(k-1)$-cell of $B$; in those cases, we will still give the definition, but it will be irrelevant. If $k=1$, then $g_{i_1\cdots i_{k-1}}(x)$ will simply mean $g(x)$, since $g$ is a $0$-form in this case.

 Take any $1\le i_1<\cdots<i_{k-1}\le n$. If $k\ge 2$ and $i_{k-1}=n$, let $g_{i_1\cdots i_{k-1}}(x)=0$ for every $x\in B$. If $i_{k-1}< n$ or $k=1$, define $g_{i_1\cdots i_{k-1}}(x)$ as follows. 
 
 First, consider $x\in B_a$. If $n=k$, let $g_{i_1\cdots i_{k-1}}(x) = 0$. 
 If $n>k$, note that $f$ restricted to $B_a$ is a $k$-form on $B_a$ satisfying $df = 0$. By the induction hypothesis, there exists some $(k-1)$-form $g'$ on $B_a$ such that $dg' = f$ in $B_a$. Define $g_{i_1\cdots i_{k-1}}(x):=g'_{i_1\cdots i_{k-1}}(x)$, which is legitimate since $i_{k-1}< n$ or $k=1$. 
 
 Next, when $x\in B_r$ for some $a< r\le b$, and $i_{k-1}<n$ or $k=1$, define
\begin{equation}
g_{i_1\cdots i_{k-1}}(x) := g_{i_1\cdots i_{k-1}} (x-e_n) + (-1)^{k-1}f_{i_1\cdots i_{k-1} n}(x-e_n),\label{gdef}
\end{equation}
by induction on $r$. 

Let $h=dg$. Take any $1\le i_1<\cdots<i_k \le n$ and $x\in B_r$ for some $a\le r\le b$, such that $dx_{i_1}\wedge \cdots \wedge dx_{i_k}$ is a $k$-cell in $B$. We will prove by induction on $r$ that 
\begin{equation}
h_{i_1\cdots i_k}(x)=f_{i_1\cdots i_k}(x). \label{hkeq}
\end{equation}
First, suppose that $r=a$. If $n=k$, then $i_1=1, i_2=2, \ldots, i_n = n$. Thus,
\eq{
h_{i_1\cdots i_k}(x) = h_{12\cdots n}(x) = \sum_{1\le i\le n} (-1)^{i-1}\partial_i g_{12\cdots \hat{i}\cdots n} (x),
}
where, as before, the notation $12\cdots \hat{i}\cdots n$ means the list obtained by omitting $i$ from the full list $12\cdots n$. 
By the definition of $g$, $\partial_i g_{12\cdots \hat{i}\cdots n} (x) = 0$ when $i\ne n$. When $i=n$, \eqref{gdef} gives
\eq{
\partial_n g_{12\cdots (n-1)} (x) =  (-1)^{n-1}f_{12\cdots n}(x),
}
completing the proof of \eqref{hkeq} when $r=a$ and $n=k$. 

Next, suppose that $r=a$ and $n>k$. If $i_k <n$, then $dx_{i_1}\wedge \cdots \wedge dx_{i_k}$ is a $k$-cell in $B_a$. Thus, the value of $h$ on this cell is the same as that of $dg'$ on this cell. But $f = dg'$ in $B_a$. Therefore, if $r=a$ and $i_k<n$, then \eqref{hkeq} holds. 

Suppose that $r=a$ and $i_k = n$. Then
\eq{
h_{i_1\cdots i_k}(x) = \sum_{1\le j\le k}(-1)^{j-1}\partial_{i_j} g_{i_1\cdots\widehat{i_j}\cdots i_k}(x).
} 
But by the definition of $g$, $\partial_{i_j} g_{i_1\cdots\widehat{i_j}\cdots i_k}(x) = 0$ for $j<k$ (since $i_k=n$), and by \eqref{gdef},
\eq{
\partial_n g_{i_1\cdots i_{k-1}}(x)= (-1)^{k-1} f_{i_1\cdots i_{k-1} n}(x).
}
(Note that all this is valid even when $k=1$.) This completes the proof of \eqref{hkeq} when $x\in B_a$. 

Next, take $r>a$ and suppose that \eqref{hkeq} has been proved for all $x\in B_{r-1}$. Take $x\in B_r$ and any $1\le i_1<\cdots <i_k\le n$. If $i_k=n$ and $r< b$, the proof of \eqref{hkeq} is exactly the same as for $i_k =n$ and $r=a$. If $i_k=n$ and $r=b$, then $dx_{i_1}\wedge \cdots \wedge dx_{i_k}$ is not a $k$-cell in $B$, so we do not have to worry about it. This completes the proof of \eqref{hkeq} when $i_k=n$, for any $r$. 

Finally, consider the case $i_k < n$ and $r>a$. Let $y = x-e_n$, so that $y\in B_{r-1}$. Let $u=dh$ and $v = df$. Then by Lemma \ref{dlmm1}, $u= 0$, and by assumption, $v = 0$. Moreover, $dy_{i_1}\wedge \cdots \wedge dy_{i_k}\wedge dy_n$ is a $(k+1)$-cell in~$B$. Thus,
\begin{equation}
0=u_{i_1\cdots i_k n}(y) =  (-1)^k\partial_n h_{i_1\cdots i_k}(y)  + \sum_{1\le j\le k} (-1)^{j-1}\partial_{i_j} h_{i_1\cdots \widehat{i_j}\cdots i_k n}(y),\label{poin1}
\end{equation}
and 
\begin{equation}
0=v_{i_1\cdots i_k n}(y) =  (-1)^k\partial_n f_{i_1\cdots i_k}(y)  + \sum_{1\le j\le k} (-1)^{j-1}\partial_{i_j} f_{i_1\cdots \widehat{i_j}\cdots i_k n}(y).\label{poin2}
\end{equation}
Take any $1\le j\le k$. Then 
\eq{
&\partial_{i_j} h_{i_1\cdots \widehat{i_j}\cdots i_k n}(y) - \partial_{i_j} f_{i_1\cdots \widehat{i_j}\cdots i_k n}(y) \\
&= (h_{i_1\cdots \widehat{i_j}\cdots i_k n}(y+e_{i_j}) - h_{i_1\cdots \widehat{i_j}\cdots i_k n}(y)) \\
&\qquad - (f_{i_1\cdots \widehat{i_j}\cdots i_k n}(y+e_{i_j}) - f_{i_1\cdots \widehat{i_j}\cdots i_k n}(y)).
}
Since $i_k< n$, the point $z := y+e_{i_j}$ belongs to $B_{r-1}$, just like $y$. Moreover, it is easy to see that the $k$-cell $dz_{i_1}\wedge\cdots \wedge\widehat{dz_{i_j}}\wedge \cdots  \wedge dz_{i_k}\wedge dz_n$ belongs to $B$, just like $dy_{i_1}\wedge\cdots \wedge\widehat{dy_{i_j}}\wedge \cdots  \wedge dy_{i_k}\wedge dy_n$. (Here $dz_{i_1} \wedge \cdots \wedge \widehat{dz_{i_j}}\wedge \cdots \wedge dz_{i_k} \wedge dz_{i_n}$ means the wedge product $dz_{i_1} \wedge  \cdots \wedge dz_{i_k}\wedge dz_{i_n}$ with the term $dz_{i_j}$ omitted.) Therefore by the identity \eqref{hkeq} for points in $B_{r-1}$, the above expression vanishes. Thus, by \eqref{poin1} and \eqref{poin2}, we get
\eq{
\partial_n f_{i_1\cdots i_k}(y) = \partial_n h_{i_1\cdots i_k}(y)
}
But $f_{i_1\cdots i_k}(y) = h_{i_1\cdots i_k}(y)$ by \eqref{hkeq} for points in $B_{r-1}$. Since $y+e_n = x$, the above identity therefore reduces to \eqref{hkeq} for points in $B_r$. This completes the proof of the claim of that $d$ is surjective from $\ma$ onto $\mb$. 

To show that $d$ is $m$-to-$1$ from $\ma$ into $\mb$ when $G$ is finite and $m$ is defined as in the statement of the lemma, take any $f\in \mb$ and $g\in \ma$  such that $dg=f$. If $g'$ is another such $(k-1)$-form, then $d(g-g')=0$. So $g' = g + h$ for some $h$ on $B$ such that $dh =0$. Conversely, if $g' = g+h$ for some $h$ such that $dh = 0$, then $dg' = dg = f$. Thus, $dg' =f$ if and only if $g' = g + h$ for some $h$ on $B$ such that $dh=0$. 

Finally, suppose that $1\le k\le n-1$ and $f$ is a closed $k$-form on $B$ that vanishes on the boundary of $B$. We will now prove by induction on $n$ that $f=dh$ for some $(k-1)$-form $h$ that vanishes on the boundary of~$B$.

Define $g$ as before. Since $f$ vanishes on the boundary, we can choose $g=0$ on $B_a$ in the first step of the construction.  The iterative construction~\eqref{gdef} and the vanishing of $f$ on the boundary ensure that $g=0$ on every $(n-1)$-dimensional face of $B$ except $B_b$. On $B_b$, $f=dg=0$. Moreover, on the $(n-2)$-dimensional boundary of $B_b$, $g=0$ since any $(k-1)$-cell on this boundary also belongs to one of the other faces of $B$. 

If $k=1$, then $g$ is a $0$-form. In this case it follows trivially from the above observations that $g=0$ on $B_b$, allowing us to take $h=g$. This also completes the proof for $n=2$, since $k=1$ is the only possibility there. So suppose that $n>2$ and $k>1$. Then by the induction hypothesis, $g=dw$ for some $(k-2)$-form $w$ on $B_b$ which vanishes on the boundary of $B_b$. Extend $w$ to a $(k-2)$-form on $B$ by defining it to be zero outside $B_b$. Let $h = g-dw$, so that $dh=dg=f$ and $h$ vanishes on $B_b$. Now notice that $w$ vanishes on each of the other faces of $B$, because $w$ vanishes on the boundary of $B_b$ and the other faces intersect $B_b$ only at the boundary. Since $g$ vanishes on each of the other faces of $B$, this shows that $h=0$ on the boundary of $B$.
\end{proof}

\subsection{Discrete coderivative}
The exterior derivative operator $d$ has an adjoint, denoted by $\delta$ and sometimes called the `codifferential operator' or simply `coderivative'. Letting 
\[
\bar{\partial}_i h(x) := h(x)-h(x-e_i),
\]
the operator $\delta$ is defined as
\begin{equation}
\delta f(x) :=  \sum_{1\le i_1<\cdots<i_k \le n}\sum_{1\le l\le k} (-1)^l\bar{\partial}_{i_l} f_{i_1\cdots i_k}(x)dx_{i_1} \wedge \cdots \wedge \widehat{dx_{i_l}}\wedge \cdots \wedge dx_{i_k}.\label{deltadef}
\end{equation}
The operator $\delta$ takes $k$-forms to $(k-1)$-forms. If $k=0$, then $\delta f = 0$ by definition. As an example, suppose that $n=4$ and $f$ is the $2$-form defined in \eqref{fex}. Then
\begin{align*}
\delta f(x) = (\bar{\partial}_2f_{12}(x)+\bar{\partial}_3f_{13}(x))dx_1 - \bar{\partial}_1f_{12}(x)dx_2 - \bar{\partial}_1f_{13}(x)dx_3.
\end{align*}
Note that on manifolds, $\partial_i$ and $\bar{\partial}_i$ are the same. But in the discrete setting, the two operators are not the same, and we need to define $\delta$ using $\bar{\partial}_i$ as above.

\subsection{Discrete Hodge dual}
An important object for us is the dual of the lattice $\zz^n$, which we will denote by $*\zz^n$. This is just another copy of $\zz^n$, whose vertices are the centers of the $n$-cells of the primal lattice. This gives a correspondence between the $n$-cells of the primal lattice and the $0$-cells of the dual lattice. This correspondence extends to a correspondence between the cell complex of the primal lattice and the cell complex of the dual lattice, where an oriented $k$-cell $c$ of the primal lattice is paired with an oriented $(n-k)$-cell $*c$ of the dual lattice. The cell $*c$ is called the Hodge dual of the cell $c$. The duality is defined as follows.

Take any $x$ in the primal lattice. Let $y$ denote the center of the $n$-cell $dx_1\wedge\cdots \wedge dx_n$, so that $y = *(dx_1\wedge\cdots\wedge dx_n)$ according to our convention. Let $dy_1, \ldots, dy_n$ be the  edges coming out of $y$ in the {\it negative} direction. 
Take any $1\le i_1<\cdots< i_k \le n$. Let $j_1,\ldots, j_{n-k}$ be an enumeration of $\{1,\ldots, n\}\setminus \{i_1,\ldots, i_k\}$, 
and define
\[
*(dx_{i_1}\wedge \cdots \wedge dx_{i_k}) := s\,dy_{j_1}\wedge \cdots \wedge dy_{j_{n-k}},
\]
where $s$ is the sign of the permutation $(i_1,\ldots, i_k, j_1,\ldots, j_{n-k})$. 
It is easy to see that the right side does not depend on our choice of $j_1,\ldots, j_{n-k}$. By the same principle, define
\[
*(dy_{j_1}\wedge \cdots \wedge dy_{j_{n-k}}) := (-1)^{k(n-k)}s\,dx_{i_1}\wedge \cdots \wedge dx_{i_k},
\]
where $(-1)^{k(n-k)}$ is present because the sign of $(j_1,\ldots, j_{n-k}, i_1,\ldots, i_k)$ is equal to $(-1)^{k(n-k)}s$. For example, if $n=4$,  $*(dx_1 \wedge dx_2) = dy_3\wedge dy_4$ and $*(dx_1\wedge dx_3) = -dy_2\wedge dy_4$. 
With $x$, $y$, $s$ and $j_1,\ldots, j_{n-k}$ as above, the Hodge dual of a $G$-valued $k$-form $f$ is defined as the following $(n-k)$-form on the dual lattice:
\[
*f(y) := \sum_{1\le i_1<\cdots< i_k\le n} f_{i_1\cdots i_k} (x)  s\, dy_{j_1}\wedge \cdots \wedge dy_{j_{n-k}}.
\]
(Note that $s$ and $j_1,\ldots, j_{n-k}$ depend on $i_1,\ldots, i_k$ in the above sum, but this dependence has been suppressed for notational clarity.) 
For example, if $n=4$ and $f$ is given by \eqref{fex}, then
\[
*f(y) = f_{12}(x) dy_3\wedge dy_4 - f_{13}(x) dy_2 \wedge dy_4.
\]
It is easy to check that for any $f$, $*f(*c)=f(c)$ and ${*{*f}} = (-1)^{k(n-k)} f$. 

The exterior derivative operator on the cell complex of the dual lattice is defined just the same way as on the primal lattice, but with $\partial_i$  replaced by $\bar{\partial}_i$, since the edges in the dual lattice are oriented in the opposite direction. The exterior derivative on the dual lattice is also denoted by $d$. With this definition, the following lemma connects the Hodge star operator with the adjoint of the exterior derivative on the dual lattice. This is a discrete version of a well-known lemma about differential forms on manifolds.
\begin{lmm}\label{dlmm3}
For any $G$-valued $k$-form $f$ on $\zz^n$, 
\eq{
\delta f(x) = (-1)^{n(k+1)+1} {*d{*f}}(y),
}
where $y$ is center of the $n$-cell $dx_1\wedge\cdots \wedge dx_n$.
\end{lmm}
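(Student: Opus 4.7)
The plan is to verify the identity by a direct coefficient-by-coefficient calculation. Both sides are $G$-valued $(k-1)$-forms on the primal lattice: the left by inspection of \eqref{deltadef}, and the right because $*f$ is an $(n-k)$-form on the dual, $d*f$ is an $(n-k+1)$-form on the dual, and the outer Hodge star sends this back to a $(k-1)$-form on the primal. By linearity of both $\delta$ and $*d*$ in $f$, I may reduce to the case $f(x) = F(x)\, dx_{i_1} \wedge \cdots \wedge dx_{i_k}$ for a fixed increasing multi-index $I = (i_1 < \cdots < i_k)$ and a $G$-valued function $F$. I write $J = (j_1 < \cdots < j_{n-k})$ for the increasing enumeration of the complement $\{1, \ldots, n\} \setminus I$ and $s_I$ for the sign of the permutation $(i_1, \ldots, i_k, j_1, \ldots, j_{n-k})$.

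The left-hand side of the claimed identity is immediate from \eqref{deltadef}:
\[
\delta f(x) = \sum_{l=1}^{k} (-1)^l\, \bar\partial_{i_l} F(x)\, dx_{i_1} \wedge \cdots \wedge \widehat{dx_{i_l}} \wedge \cdots \wedge dx_{i_k}.
\]
For the right-hand side I unwind the three operators in turn. First, $*f(y) = s_I F(x)\, dy_{j_1} \wedge \cdots \wedge dy_{j_{n-k}}$. Applying $d$ on the dual lattice, using the analogue of \eqref{dform} with $\partial$ replaced by $\bar\partial$, produces $s_I \sum_m \bar\partial_m F(x)\, dy_m \wedge dy_{j_1} \wedge \cdots \wedge dy_{j_{n-k}}$, and this wedge vanishes unless $m$ is distinct from every $j_i$, i.e.\ unless $m \in I$. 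So only the terms $m = i_l$ for $l = 1, \ldots, k$ survive. For each surviving $l$, reordering $dy_{i_l} \wedge dy_{j_1} \wedge \cdots \wedge dy_{j_{n-k}}$ into increasing order contributes an explicit permutation sign, and a final Hodge star carries the resulting $(n-k+1)$-cell on the dual back to the primal $(k-1)$-cell $dx_{I \setminus \{i_l\}}$, picking up one further sign factor via the rule $*(dy_{m_1} \wedge \cdots \wedge dy_{m_{n-k+1}}) = (-1)^{(k-1)(n-k+1)}\tilde{s}\, dx_{I\setminus\{i_l\}}$ for an appropriate sign $\tilde s$.

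The main obstacle is organizing these sign contributions. Combining them, the coefficient of $\bar\partial_{i_l} F(x)\, dx_{I \setminus \{i_l\}}$ in $*d*f(y)$ is a product of three explicit permutation signs, and the task is to show this product equals $(-1)^{n(k+1)+1}(-1)^l$ for every $l$ and every $I$. I plan to realize each of the three signs as the parity of a concrete permutation of $(1, 2, \ldots, n)$, so that their composition reduces to the parity of a single permutation whose structure depends only on $n$, $k$, and $l$. The difference-operator structure on the two sides matches automatically once one observes that $d$ on the dual is defined via $\bar\partial$ precisely so that the primal coderivative \eqref{deltadef} and $*d*$ share the same finite differences, leaving only the sign bookkeeping to execute. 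A small-case check, for instance $n = 2$, $k = 1$ (where both sides evaluate to $-\bar\partial_1 F_1 - \bar\partial_2 F_2$), pins down the universal constant $(-1)^{n(k+1)+1}$ and serves as a sanity test for the general sign count.
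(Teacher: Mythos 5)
Your proposal follows essentially the same route as the paper's proof: unwind $*d*f$ by applying each operator in turn, observe that in $d*f$ only the terms $dy_{i_l}\wedge dy_{j_1}\wedge\cdots\wedge dy_{j_{n-k}}$ with $i_l\in I$ survive, and then match the coefficient of $\bar\partial_{i_l}f_{i_1\cdots i_k}\,dx_{I\setminus\{i_l\}}$ against $\delta f$ by a sign count. The only difference is presentational: you plan to split the sign into three permutation parities and compose them, whereas the paper computes it in one step by reading off the sign of the single permutation $(i_l, j_1,\ldots, j_{n-k}, i_1,\ldots,\widehat{i_l},\ldots, i_k)$ as $(-1)^{k(n-k)+n-k+l-1}s$; carrying out your plan would land on the same answer.
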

\begin{proof}
Note that if $f$ is a $G$-valued $k$-form on the primal lattice, and $x$, $y$, $s$ and $j_1,\ldots, j_{n-k}$ are as in the paragraphs preceding the statement of the lemma, then
\eq{
d{*f}(y) = \sum_{1\le i_1<\cdots< i_k\le n} \sum_{1\le i\le n} \bar{\partial}_if_{i_1\cdots i_k} (x) s\,dy_i\wedge dy_{j_1}\wedge \cdots \wedge dy_{j_{n-k}}.
}
The summand on the right is zero unless $i$ is one of $i_1,\ldots, i_k$.  If $i=i_l$ for some $l$, then 
\eq{
&{*(dy_i\wedge dy_{j_1}\wedge \cdots \wedge dy_{j_{n-k}})} \\
&= (-1)^{k(n-k)+n-k+l-1}s\, dx_{i_1}\wedge\cdots \wedge\widehat{dx_{i_l}} \wedge \cdots \wedge dx_{i_k},
}
since  the sign of $(i_l, j_1,\ldots, j_{n-k}, i_1,\ldots,\widehat{i_l}, \ldots, i_k)$ equals 
\[
(-1)^{k(n-k) + n-k+l-1}s.
\]
Thus, putting $r := k(n-k)+n-k-1$, we get
\eq{
&{*d{*f}}(y) \\
&= (-1)^{r}\sum_{1\le i_1<\cdots<i_k \le n}\sum_{1\le l\le k} (-1)^l\bar{\partial}_{i_l} f_{i_1\cdots i_k}(x)dx_{i_1} \wedge \cdots \wedge \widehat{dx_{i_l}}\wedge \cdots \wedge dx_{i_k}\\
&= (-1)^{r}\delta f(x).
}
Since $k(k+1)$ is even, $(-1)^r = (-1)^{n(k+1)+1}$. 
\end{proof}

\subsection{Hodge dual on a cube}
For any cube $B$ in $\zz^n$, define the dual cube $*B$ to be the union of all $n$-cells in the dual lattice that are duals of the vertices of $B$. It is not difficult to see that the dual of the cube 
\[
B = ([a_1,b_1]\times\cdots \times[a_n, b_n])\cap \zz^n
\]
is the cube 
\[
{*B} = ([a_1-1/2, b_1+1/2]\times \cdots \times[a_n-1/2, b_n+1/2])\cap *\zz^n,
\]
and 
\[
{*{*B}} = ([a_1-1, b_1+1]\times \cdots \times[a_n-1, b_n+1])\cap \zz^n.
\]
Recall that a $k$-cell $c$ is in $B$ if all the vertices of $c$ are vertices of $B$. We will say that a  $k$-cell $c$ belongs to the boundary of $B$ if every vertex of $c$ is a boundary vertex of $B$. We will say that $c$ is outside $B$ if at least one vertex of $c$ is outside $B$. If $c$ is in $B$ but not on the boundary of $B$, we will say that $c$ is an internal $k$-cell of $B$. 
\begin{lmm}\label{dlmm4}
Let $B$ be any cube in $\zz^n$. Then a $k$-cell $c$ is outside $B$ if and only if $*c$ is either outside $*B$ or on the boundary of $*B$. Moreover, if $c$ is a $k$-cell outside $B$ that contains a $(k-1)$-cell of $B$, then $*c$ belongs to the boundary of $*B$.
\end{lmm}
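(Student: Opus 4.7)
The plan is to reduce both assertions to coordinate-level statements and verify them by unpacking definitions. Let the $k$-cell $c=dx_{i_1}\wedge\cdots\wedge dx_{i_k}$ be based at $x$, let $j_1,\ldots,j_{n-k}$ enumerate the complementary directions, and write $B=\prod_{i=1}^n[a_i,b_i]\cap\zz^n$ together with $*B=\prod_{i=1}^n[a_i-\tfrac12,b_i+\tfrac12]\cap *\zz^n$. Recall that $*c$ is an $(n-k)$-cell based at $y=x+(\tfrac12,\ldots,\tfrac12)$ whose vertices are $y-\sum_{l\in T}e_{j_l}$ for $T\subseteq\{1,\ldots,n-k\}$. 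As a first step I would record three coordinate characterizations: (i) $c$ is in $B$ iff $x_{i_m}\in[a_{i_m},b_{i_m}-1]$ for every $m$ and $x_{j_l}\in[a_{j_l},b_{j_l}]$ for every $l$; (ii) $*c$ is in $*B$ iff $x_{i_m}\in[a_{i_m}-1,b_{i_m}]$ for every $m$ and $x_{j_l}\in[a_{j_l},b_{j_l}]$ for every $l$; and (iii) some vertex of $*c$ is internal to $*B$ iff one can choose signs $\epsilon_l\in\{0,1\}$ so that every coordinate of the vertex $y-\sum_l\epsilon_l e_{j_l}$ lies strictly in $(a_i-\tfrac12,\,b_i+\tfrac12)$.

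For the first assertion I will prove its contrapositive: $c$ is in $B$ iff $*c$ is an internal cell of $*B$. The forward direction combines (i) with (ii) (which follows since $[a_{i_m},b_{i_m}-1]\subseteq[a_{i_m}-1,b_{i_m}]$) and with (iii), by choosing $\epsilon_l=0$ when $x_{j_l}\le b_{j_l}-1$ and $\epsilon_l=1$ otherwise; this works because $[a_{j_l},b_{j_l}]=[a_{j_l},b_{j_l}-1]\cup[a_{j_l}+1,b_{j_l}]$. The reverse direction follows because a short direction-by-direction case analysis shows that condition (iii) collapses exactly to condition (i). Taking the contrapositive of the resulting biconditional yields the first claim.

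For the second assertion, I would assume $c$ is outside $B$ and contains a $(k-1)$-face $c'$ lying in $B$. Such a face is obtained by dropping one direction $i_l$ and fixing its coordinate at either $x_{i_l}$ or $x_{i_l}+1$. In either case, unfolding ``$c'$ in $B$'' forces $x_j\in[a_j,b_j]$ for every $j\in\{j_1,\ldots,j_{n-k}\}$ and $x_{i_m}\in[a_{i_m},b_{i_m}-1]$ for every $m\ne l$, so by (i) the only coordinate where $c$ can fail to be in $B$ is $x_{i_l}$; comparing the remaining allowed ranges then forces $x_{i_l}=b_{i_l}$ in the first case and $x_{i_l}=a_{i_l}-1$ in the second. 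In either situation, $x_{i_l}+\tfrac12$ equals one of the boundary values $a_{i_l}-\tfrac12$ or $b_{i_l}+\tfrac12$; since this coordinate is pinned across all vertices of $*c$, every vertex of $*c$ is a boundary vertex of $*B$, which is the desired conclusion. The main obstacle is essentially bookkeeping: one has to keep careful track of which coordinates of $*c$ are pinned (the $i_m$-directions) versus which vary (the $j_l$-directions), and correctly translate between the integer ranges defining $B$ and the shifted half-integer ranges defining $*B$.
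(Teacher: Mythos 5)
Your proof is correct and takes essentially the same coordinate-unpacking approach as the paper: reduce everything to inequalities on the base point $x$ and compare the integer ranges defining $B$ with the shifted half-integer ranges defining $*B$. The only cosmetic difference is in the second assertion, where the paper first checks $*c\in *B$ and then invokes the already-proved biconditional of the first part, whereas you re-derive the boundary membership directly by noting the $i_l$-th coordinate is pinned at $a_{i_l}-\tfrac12$ or $b_{i_l}+\tfrac12$; this is fine, but you should also explicitly note that your coordinate bounds put all vertices of $*c$ inside $*B$ (otherwise ``boundary vertex of $*B$'' would not be the right conclusion), which is immediate from the ranges you have already recorded.
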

\begin{proof}
Without loss of generality, suppose that $B = [a,b]^n \cap \zz^n$ for some $a<b$. First, suppose that $c$ is in $B$. Then  all the vertices of $*c$ belong to $*B$, and therefore $*c$ is in $*B$. Moreover, one of the vertices of $*c$ is the center of an $n$-cell of $B$. The center of an $n$-cell of $B$ cannot be a boundary vertex of $*B$. Thus, $*c$ cannot belong to the boundary of $*B$. This proves the `if' part of the first claim. 

To prove the `only if' part, take a $k$-cell $c$ outside $B$.  Suppose that 
 $c = dx_{i_1}\wedge \cdots \wedge dx_{i_k}$. 
Let $y$ be the center of the $n$-cell containing $c$. Since $c$ is outside $B$, at least one of the integers $x_1,\ldots, x_n, x_{i_1}+1,\ldots, x_{i_k}+1$ must be outside the interval $[a,b]$. 

Suppose that $x_{i_1}+1\not\in [a,b]$. Then at least one of the two numbers $y_{i_1}-1$ and $y_{i_1}+1$ is not in $[a-1/2,b+1/2]$. This allows us to produce, for each vertex of $*c$, a neighbor that does not belong to $*B$. Therefore $*c$ is either on the boundary of $*B$ or outside $*B$. 

Similarly, if  $x_1\not\in [a,b]$, then either $x_1 > b$ or $x_1 < a$. In the first case, $y_1\not\in [a-1/2, b+1/2]$, and in the second case, $y_1-1\not\in [a-1/2, b+1/2]$. Again, this allows us to produce for each vertex of $*c$ a neighbor that is not in $*B$. This completes the proof of the `only if' part of the claim.

To prove the last part, let $c = dx_{i_1}\wedge \cdots \wedge dx_{i_k}$ be a $k$-cell outside $B$. If $c$ contains a $(k-1)$-cell of $B$,  then there exists $1\le l\le k$ such that $x_{i_l}+1\in [a,b]$, $x_{i_j}\in [a,b-1]$ for every $j\ne l$, and $x_i\in [a,b]$ for every $i\not\in \{i_1,\ldots,i_k\}$. Let $y_i = x_i+1/2$ for each $i$. Then  $y_i\in [a-1/2, b+1/2]$ for all $i$, and  $y_i\in [a+1/2, b+1/2]$ for all $i\not\in \{i_1,\ldots,i_k\}$. This implies that $*c$ is in $*B$. By the first part, $*c$ must therefore be on the boundary of $*B$.
\end{proof}
Suppose that $f$ is a $G$-valued $k$-form on $B$. The Hodge dual $*f$ of $f$ is an $(n-k)$-form on $*B$ defined as follows. If $c$ is a $k$-cell of $B$, let $*f(*c) := f(c)$ as usual. By Lemma \ref{dlmm4}, this defines $*f$ on all the  internal $(n-k)$-cells of $*B$. On the boundary cells, define $*f$ to be zero. Another way to say this is the following. Extend $f$ to a $k$-form $f'$ on $\zz^n$ by defining it to be zero outside $B$. Then define $*f$ to be the restriction of $*f'$ to $*B$. By Lemma~\ref{dlmm4}, the two definitions are equivalent.

The  codifferential $\delta f$ is defined using a similar principle: First extend $f$ to a $k$-form $f'$ on all of $\zz^n$ by defining it to be zero outside $B$, then define $\delta f'$ using \eqref{deltadef}, and finally let $\delta f$ be the restriction of $\delta f'$ to the $(k-1)$-cells of $B$. 
\begin{lmm}\label{dlmm5}
Let $f$ be  a $G$-valued $k$-form on a cube $B$ in $\zz^n$. If $f'$ is a $k$-form on $\zz^n$ such that $f'=f$ on $B$ and $f'(c)=0$ for any $c$ such that $*c$ is a boundary cell of $*B$, then $\delta f = \delta f'$ in $B$ and $*f = *f'$ in $*B$.
\end{lmm}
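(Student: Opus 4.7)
The plan is to reduce everything to the special ``zero extension'' of $f$. Let $f'_0$ denote the $k$-form on $\zz^n$ that agrees with $f$ on $B$ and satisfies $f'_0(c)=0$ for every $k$-cell $c$ outside $B$. The paragraphs immediately preceding the lemma define $*f := *f'_0|_{*B}$ and $\delta f := \delta f'_0|_B$, so it suffices to check that $*f'=*f'_0$ on $*B$ and $\delta f'=\delta f'_0$ on $B$.

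\textbf{First identity.} I would verify $*f'(*c)=*f'_0(*c)$ one cell at a time. If $*c$ is in $*B$, there are two cases: either $c$ lies in $B$, in which case $f'(c)=f(c)=f'_0(c)$; or $c$ lies outside $B$. In the second case Lemma \ref{dlmm4} forces $*c$ to lie outside $*B$ or on its boundary, and since by assumption $*c$ is in $*B$, only the boundary option is possible; then the standing hypothesis on $f'$ gives $f'(c)=0$, and of course $f'_0(c)=0$. Since $*f'(*c)=f'(c)$ and $*f'_0(*c)=f'_0(c)$ by definition of the Hodge dual, the identity follows.

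\textbf{Second identity.} Let $g:=f'-f'_0$. The claim is $\delta g=0$ on $B$. Fix a $(k-1)$-cell $c'=dx_{j_1}\wedge\cdots\wedge dx_{j_{k-1}}$ of $B$. Reading off the coefficient of $c'$ from the definition \eqref{deltadef} of $\delta g(x)$, one sees that it is a $\zz$-linear combination of values $g_{i_1\cdots i_k}(x)$ and $g_{i_1\cdots i_k}(x-e_{i_l})$, where $(i_1,\ldots,i_k)$ ranges over tuples obtained by inserting one index into $(j_1,\ldots,j_{k-1})$. Each such summand evaluates $g$ on a $k$-cell that contains $c'$ as a face. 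Now the key dichotomy: any $k$-cell $c$ containing the $(k-1)$-cell $c'$ of $B$ is either itself in $B$, or it is outside $B$ while still containing a $(k-1)$-cell of $B$. In the first case $g(c)=f(c)-f(c)=0$. In the second case the last sentence of Lemma \ref{dlmm4} places $*c$ on the boundary of $*B$, so the hypothesis on $f'$ gives $f'(c)=0$, while $f'_0(c)=0$ because $c$ is outside $B$; hence $g(c)=0$ again. Every term in $(\delta g)(c')$ therefore vanishes.

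The main obstacle is spotting the right structural fact, which is the last sentence of Lemma \ref{dlmm4}: the $k$-cells that $\delta$ ``reaches'' from a $(k-1)$-cell of $B$ are exactly the cells that either sit inside $B$ or have their Hodge dual on the boundary of $*B$, and the hypothesis on $f'$ is tailored to kill exactly this second family. Once this observation is in hand the proof is a one-line unpacking of \eqref{deltadef}, with no need to invoke Lemma \ref{dlmm3}; the rest is routine bookkeeping with orientations and signs, which I would skip over in the writeup.
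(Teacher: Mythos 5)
Your proof is correct and takes essentially the same route as the paper's: introduce the zero extension ($f'_0$, the paper's $f''$), note that $\delta g(c')$ involves only values of $g$ on $k$-cells containing $c'$, and kill those values using the dichotomy in/outside $B$ together with the last assertion of Lemma~\ref{dlmm4}. The only difference is that you spell out the $*f=*f'$ case and the coefficient bookkeeping for $\delta$, which the paper compresses to a sentence.
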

\begin{proof}
It follows directly from the way $*f$ was defined above, the given conditions on $f'$, and Lemma~\ref{dlmm4}, that $*f = *f'$ in $*B$. To prove the other assertion, extend $f$ to a $k$-form $f''$ on $\zz^n$ by defining it to be zero outside $B$.  Let $c$ be a $(k-1)$-cell of $B$. By the formula \eqref{deltadef}, $\delta f''(c)$ is a linear combination of $f''(c')$ as $c'$ ranges over all $k$-cells $c'$ containing $c$. Take any such $c'$. If $c'$ is in $B$, then $f'(c') = f''(c')$. If $c'$ is not in $B$, then by Lemma~\ref{dlmm4}, $*c'$ is a boundary cell of $*B$, which implies that $f'(c')=0 = f''(c')$. This shows that $\delta f '= \delta f'' = \delta f$ in $B$.
\end{proof}

\subsection{Primal-dual correspondence}
Lemmas \ref{dlmm1}--\ref{dlmm5} combine to yield the following important result, which will be crucial for our calculations. 
\begin{lmm}\label{dproplmm}
Take any $1\le k\le n-1$. Let $G$ be an Abelian group. Let $B$ be any cube in $\zz^n$.  Then the exterior derivative $d$ is a surjective map from the set of $G$-valued  $(n-k-1)$-forms $g$ on $*B$ that satisfy $dg = 0$ on the boundary of $*B$, to the set of duals of $k$-forms $f$ on $B$ that satisfy $\delta f = 0$. Moreover, if $G$ is finite and $m$ is the number of closed $G$-valued $(n-k-1)$-forms on $*B$, then this map is an $m$-to-$1$ correspondence.
\end{lmm}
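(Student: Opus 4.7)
The plan is to exhibit the correspondence via the map $\Phi\colon g\mapsto dg$ and verify three things: (i) $\Phi$ sends its asserted domain into the asserted codomain, (ii) $\Phi$ is surjective, invoking the Poincar\'e lemma (Lemma \ref{dlmm2}) on $*B$, and (iii) when $G$ is finite, every fiber of $\Phi$ has exactly $m$ elements.

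For (i), take $g$ an $(n-k-1)$-form on $*B$ with $dg=0$ on boundary cells of $*B$. I would define a candidate $k$-form $f$ on $B$ by $f(c):=dg(*c)$ for each $k$-cell $c$ of $B$; this is legitimate because, by Lemma \ref{dlmm4}, $c\in B$ forces $*c$ to be an internal $(n-k)$-cell of $*B$, where $dg$ is defined. Comparing the definition of $*f$ on $*B$ with $dg$, both agree on internal $(n-k)$-cells (by construction) and both vanish on boundary $(n-k)$-cells (by the hypothesis on $g$ and the definition of $*f$), so $*f=dg$ on $*B$. To verify $\delta f=0$, fix a $(k-1)$-cell $c_0$ in $B$, extend $f$ by zero to $f'$ on $\zz^n$, and expand $\delta f(c_0)$ via \eqref{deltadef} as a signed sum of $f'(c)$ over $k$-cells $c\supset c_0$. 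Cells $c\in B$ contribute $\pm dg(*c)$, and cells $c$ outside $B$ contribute zero; by the last part of Lemma \ref{dlmm4}, each such outside $c$ has $*c$ on the boundary of $*B$, where $dg$ vanishes as well. Thus the full signed sum equals, up to sign, $d(dg)(*c_0)$, which is zero by Lemma \ref{dlmm1}. Hence $*f=dg$ with $\delta f=0$.

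For (ii), given $f$ on $B$ with $\delta f=0$, I would first check that $*f$ is closed on all of $*B$. For each internal $(n-k+1)$-cell $c''$ of $*B$, write $c''=*c_0$ for a $(k-1)$-cell $c_0$ of $B$; the same Hodge bookkeeping as in (i) converts $d*f(c'')$ into $\pm \delta f(c_0)=0$. For each boundary $(n-k+1)$-cell $c''$, every $(n-k)$-face $c^*\subset c''$ has its vertices among those of $c''$, hence all on the boundary of $*B$; so $*f(c^*)=0$ and $d*f(c'')=0$ trivially. Since $*f$ vanishes on the boundary of $*B$ and $1\le n-k\le n-1$, the final clause of Lemma \ref{dlmm2} yields an $(n-k-1)$-form $g$ on $*B$ with $dg=*f$ and $g=0$ on the boundary. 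In particular $dg=*f=0$ on boundary cells, so $g$ lies in the domain of $\Phi$ and $\Phi(g)=*f$.

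For (iii), fix $f$ with $\delta f=0$ and some $g_0\in\Phi^{-1}(*f)$. Any other preimage $g$ satisfies $d(g-g_0)=0$, and conversely for any closed $(n-k-1)$-form $h$ on $*B$, $g_0+h$ lies in the domain (the boundary condition $d(g_0+h)=dg_0=*f=0$ on boundary of $*B$ is automatic) and satisfies $\Phi(g_0+h)=*f$. So the fiber is a coset of the group of closed $(n-k-1)$-forms on $*B$, which has cardinality $m$. The main obstacle I expect is the careful tracking of signs and of the correspondence between primal cells outside $B$ and dual cells on the boundary of $*B$ in step (i)—that is, establishing the exact identity $\delta f(c_0)=\pm d(dg)(*c_0)$—since this is where both the Hodge-dual sign conventions and the boundary hypothesis on $dg$ are simultaneously in play.
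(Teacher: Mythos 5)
Your proof is correct and follows essentially the same route as the paper's, relying on the same key ingredients: the Poincar\'e lemma (Lemma \ref{dlmm2}), the relation $\delta = \pm *d*$ (Lemma \ref{dlmm3}), and the boundary correspondence between $B$ and $*B$ (Lemma \ref{dlmm4}). The only cosmetic differences are that the paper handles the ``maps into the codomain'' direction via Lemma \ref{dlmm5} rather than your direct expansion of $\delta f(c_0)$ into $\pm\, d(dg)(*c_0)$, and obtains the $m$-to-$1$ count by invoking the counting clause of Lemma \ref{dlmm2} on $*B$ instead of your separate fiber-coset argument (which essentially reproves that clause).
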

\begin{proof}
Let $\ma$ be the set of $G$-valued  $(n-k-1)$-forms $g$ on $*B$ that satisfy $dg = 0$ on the boundary of $*B$, and let $\mb$ be the set of $k$-forms $f$ on $B$ that satisfy $\delta f = 0$.  

Take any $f\in \mb$. Extend $f$ to a $k$-form $f'$ on $\zz^n$ by setting $f'=f$ in $B$ and $f'=0$ outside $B$. Then $\delta f' = 0$ in $B$ since $f\in \mb$, and $\delta f' = 0$ outside $B$ because any $(k-1)$-cell outside $B$ can belong to only $k$-cells outside $B$, where $f'$ is zero. Thus by Lemma \ref{dlmm3}, we see that $d{*f'} = 0$ everywhere. Therefore by Lemma~\ref{dlmm2}, there exists an $(n-k-1)$-form $g$ on $*B$ such that $dg = *f'$ in $*B$, and moreover, there are exactly $m$ such $g$ if $G$ is finite. But $*f' = *f$ in $*B$, so $dg = *f$ in $*B$. Since $f' = 0$ outside $B$, and the dual of any cell on the boundary of $*B$ must be outside $B$ by Lemma \ref{dlmm4}, $dg$ must be zero on the boundary of $*B$. Thus, $g\in \ma$. This shows that the dual of every $f\in \mb$ is the image of some element of $\ma$ under the map $d$. Moreover, if $G$ is finite, then exactly $m$ elements of $\ma$ map to the dual of a given element of $\mb$ under the map $d$. 

Next, take any $g\in \ma$. Extend $g$ to a $(n-k-1)$-form $g'$ on $*\zz^n$ by setting $g' = g$ in $*B$ and $g'=0$ outside $*B$. Let $f' := (-1)^{k(n-k)}{*dg'}$. Then $dg'=*f'$ is the dual of the $k$-form $f'$.  By Lemma~\ref{dlmm1} and Lemma~\ref{dlmm3}, this shows that $\delta f' = 0$. Moreover, since $dg' =0$ on the boundary of $*B$, $f'(c)=0$ for any $c$ such that $*c$ belongs to the boundary of $*B$. Let $f$ be the restriction of $f'$ to $B$. Then by Lemma \ref{dlmm5}, $\delta f = \delta f' = 0$ in $B$ and $*f = *f' = dg' = dg$ on $*B$. This shows that $d$ maps $\ma$ into the set of duals of elements of $\mb$, completing the proof.
\end{proof}

\subsection{Poincar\'e lemma for the coderivative}
There is a dual version of the Poincar\'e lemma that will be needed for certain purposes. The proof is essentially a combination of the Poincar\'e lemma and Lemma \ref{dlmm3}. 
\begin{lmm}[Poincar\'e lemma for the coderivative]\label{poinlmm2}
Take any $1\le k\le n-1$. Let $f$ be a $G$-valued $k$-form on $\zz^n$ which is zero outside a finite region. Suppose that $\delta f = 0$. Then there is a $(k+1)$-form $h$ such that $f = \delta h$. Moreover, if $f$ is zero outside a cube $B$, then there is a choice of $h$ that is zero outside $B$.
\end{lmm}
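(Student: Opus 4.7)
The plan is to dualize the statement, invoke the Poincar\'e lemma (Lemma \ref{dlmm2}) for the exterior derivative on the dual lattice, and then dualize back, using Lemma \ref{dlmm3} to handle the sign bookkeeping. The key mechanism is that $\delta$ on the primal lattice is $d$ on the dual lattice (up to sign and Hodge star), and the primal condition ``zero outside a finite region'' becomes the dual condition ``vanishes on the boundary of a suitable cube,'' which is exactly the hypothesis needed for the strong form of Lemma \ref{dlmm2}.

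First I would choose a cube $B\subset\zz^n$ outside which $f$ vanishes (taking the prescribed cube when one is given). Viewing $*f$ as an $(n-k)$-form on the dual cube $*B$, I verify two properties: (i) $*f$ is closed, since $\delta f=0$ together with Lemma \ref{dlmm3} and $**=\pm\mathrm{id}$ gives $d{*f}=0$ on all of $*\zz^n$, and in particular on $*B$; and (ii) $*f$ vanishes on the boundary of $*B$, because by Lemma \ref{dlmm4} every boundary cell of $*B$ is the Hodge dual of a cell outside $B$, where $f\equiv 0$.

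Since the assumption $1\le k\le n-1$ forces $1\le n-k\le n-1$, I can apply the last assertion of Lemma \ref{dlmm2} to obtain an $(n-k-1)$-form $g$ on $*B$ with $dg=*f$ on $*B$ and $g\equiv 0$ on the boundary of $*B$. I then extend $g$ by zero to a form $\tilde g$ on $*\zz^n$; the boundary vanishing ensures $d\tilde g=dg=*f$ inside $*B$, while outside $*B$ both sides vanish (for $*f$, again by Lemma \ref{dlmm4} applied to $*B$ together with the hypothesis that $f$ is zero outside $B$). Hence $d\tilde g=*f$ identically on $*\zz^n$.

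Finally, I set $h:=\varepsilon\,{*\tilde g}$ for an appropriate sign $\varepsilon\in\{\pm 1\}$, regarding $*\tilde g$ as a $(k+1)$-form on $\zz^n$ via the identification $**\zz^n=\zz^n$. A second application of Lemma \ref{dlmm3}, combined with $**\tilde g=\pm\tilde g$ and $**f=\pm f$, yields $\delta h=f$ after the sign $\varepsilon$ is chosen. Because $\tilde g$ vanishes both outside $*B$ and on the boundary of $*B$, and by Lemma \ref{dlmm4} these are precisely the duals of $(k+1)$-cells outside $B$, the form $h$ vanishes outside $B$, proving the stronger assertion. The only real (and very mild) obstacle is the sign and boundary bookkeeping; no new ingredient is needed beyond the lemmas already established in this section.
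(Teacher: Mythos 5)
Your proposal is correct and follows essentially the same route as the paper's proof: dualize $f$, note that $\delta f = 0$ becomes $d{*f}=0$ via Lemma \ref{dlmm3}, use Lemma \ref{dlmm4} to see that $*f$ vanishes on the boundary of $*B$, invoke the boundary-vanishing case of Lemma \ref{dlmm2} to produce a primitive $g$ on $*B$ vanishing on $\partial(*B)$, extend by zero, and dualize back with a sign correction. The only cosmetic difference is that the paper writes out the explicit power of $-1$ defining $h$ rather than leaving it as an unspecified $\varepsilon$, but the substance and the use of Lemmas \ref{dlmm2}--\ref{dlmm4} match exactly.
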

\begin{proof}
Since $\delta f = 0$, Lemma \ref{dlmm3} implies that $d{*f} = 0$. Since $f =0$ outside a finite region, there is a large enough cube $B$ such that $f=0$ outside $B$. Let $f'$ be the restriction of $f$ to $B$. By definition, $*f'$ vanishes on the boundary of $*B$. By Lemma \ref{dlmm4} and the fact that $f$ vanishes outside $B$, $*f = 0$ on the boundary of $*B$. Therefore,  $d{*f'}=d{*f} = 0$ everywhere in $*B$. By Lemma~\ref{dlmm2}, this implies that there is a $(n-k-1)$-form $g'$ on $*B$ such that $dg' = *f'$ and $g'$ vanishes on the boundary of $*B$. Extend $g'$ to a $(n-k-1)$-form $g$ on $*\zz^n$ by defining it to be zero outside $*B$. Since $g$ vanishes on the boundary of $*B$ and outside $*B$, $dg = 0 =*f$ outside $*B$. Combining, we see that $dg=*f$ everywhere. Let 
\[
h := (-1)^{-(k-1)(n-k+1)-k(n-k)-nk-1}{*g},
\]
so that
\[
*h = (-1)^{-k(n-k)-nk-1} g.
\]
Thus, by Lemma \ref{dlmm4}, 
\eq{
\delta h &= (-1)^{nk+1}{*d{*h}}\\
&= (-1)^{-k(n-k)} {*dg} = f.
}
Lastly, by Lemma \ref{dlmm4} and the definition of $h$, it follows that $h$ vanishes outside the cube $B$. 
\end{proof}

\section{Discrete surfaces and vortices}\label{strucsec}
In this section, we will specialize the results of Section \ref{calcsec} to the lattice $\zz^4$ and the group $\zz_2$, and make some important applications of the results. In this section, $\zz_2$ will be treated as the quotient group $\zz/2\zz$, with the operation of addition modulo $2$.

\subsection{Surfaces} 
Note that since $x=-x$ in the group $\zz_2$, there is no reason to distinguish between positively and negatively oriented cells in the cell complex of $\zz^4$ when we are dealing with $\zz_2$-valued differential forms. Accordingly, edges may be identified with $1$-cells and plaquettes with $2$-cells. A unique feature of $\zz^4$ is that duals of plaquettes are plaquettes in the dual lattice. 

Since $\zz_2$ contains only two elements, there is a natural one-to-one correspondence between sets of edges and $\zz_2$-valued $1$-forms on $\zz^4$.  Namely, given a $1$-form, we associate to it the set of edges where the $1$-form takes value $1$. Similarly, there is a one-to-one correspondence between $\zz_2$-valued $2$-forms and sets of plaquettes, and a one-to-one correspondence between $0$-forms and sets of vertices.

We will refer to a set of plaquettes as a `surface'. If $P$ is surface, we will refer to the set of dual plaquettes, $*P$, as the `dual surface'.

With the above correspondence, it is easy to see that if $\gamma$ is a finite set of edges, and $f$ is the $1$-form that corresponds to $\gamma$, then the $0$-form $\delta f$ corresponds to the set of vertices that are contained in an odd number of edges of $\gamma$. In other words, $\gamma$ is a generalized loop if and only if $\delta f = 0$.

We define the boundary of a surface $P$ as the set of edges that are contained in an odd number of plaquettes in $P$. In other words, if $f$ is the $2$-form corresponding to $P$, then the boundary of $P$ is simply the set of edges corresponding to the $1$-form $\delta f$. We will say that a surface is closed if its boundary is empty. An edge of $P$ that is not a boundary edge will be called an internal edge. 

The following result is a simple consequence of the Poincar\'e lemma for the coderivative.
\begin{lmm}\label{looplmm}
If $\gamma$ is a generalized loop in $\zz^4$, then $\gamma$ is the boundary of some surface $P$. Moreover, if $\gamma$ is contained in a cube $B$, then there is a choice of $P$ that is also contained in $B$.
\end{lmm}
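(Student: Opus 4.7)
The plan is to translate the statement into the language of $\zz_2$-valued differential forms and invoke the Poincar\'e lemma for the coderivative (Lemma \ref{poinlmm2}). Let $f$ be the $\zz_2$-valued $1$-form on $\zz^4$ corresponding to the edge set $\gamma$, so $f(e)=1$ exactly when $e\in\gamma$. Because $\gamma$ has only finitely many edges, $f$ vanishes outside a finite region. Moreover, as noted in the discussion preceding the lemma, $\gamma$ being a generalized loop is precisely equivalent to every vertex lying in an even number of edges of $\gamma$, which in turn is equivalent to $\delta f = 0$.

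With these two observations ($f$ finitely supported and $\delta f = 0$), Lemma \ref{poinlmm2} directly applies with $k=1$, producing a $\zz_2$-valued $2$-form $h$ on $\zz^4$ such that $f = \delta h$. Let $P$ be the set of plaquettes corresponding to $h$, i.e., the plaquettes on which $h$ takes the value $1$. By the correspondence between $2$-forms and sets of plaquettes, the boundary of $P$ (the set of edges lying in an odd number of plaquettes of $P$) is exactly the $1$-form $\delta h = f$, which corresponds to $\gamma$. Hence $\gamma$ is the boundary of $P$.

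For the second assertion, suppose $\gamma\subseteq B$ for some cube $B$. Then every edge outside $B$ fails to be in $\gamma$, so $f$ vanishes outside $B$. The second clause of Lemma \ref{poinlmm2} then provides a choice of $h$ that also vanishes outside $B$. The corresponding surface $P$ contains only plaquettes inside $B$, which is the required containment. This completes the proof. The argument is essentially a translation exercise, and the only step requiring care is checking that containment in $B$ for the edge set $\gamma$ implies $f$ vanishes outside $B$ so that the cube-preserving version of Lemma \ref{poinlmm2} applies; this is immediate from the identification of $\gamma$ with the support of $f$.
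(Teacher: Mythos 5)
Your proposal is correct and follows essentially the same route as the paper's own proof: translate $\gamma$ into a $\zz_2$-valued $1$-form $f$ with $\delta f=0$, invoke Lemma \ref{poinlmm2} to produce a $2$-form $h$ with $\delta h=f$ (vanishing outside $B$ when $f$ does), and read off the surface $P$ from the support of $h$. The only difference is that you spell out the translation steps more explicitly, which is harmless.
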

\begin{proof}
Let $f$ be the $1$-form corresponding to $\gamma$. Then $\delta f=0$, as observed above. Therefore by Lemma \ref{poinlmm2}, there is a $2$-form $h$ such that $\delta h = f$. Moreover, if $f$ is zero outside $B$, then $h$ can be chosen such that $h=0$ outside $B$. This completes the proof of the lemma.
\end{proof}
A plaquette $p$ will be called an internal plaquette of a surface $P$ if none of the edges of $p$ is a boundary edge of $P$. Otherwise, $p$ will be called a boundary plaquette of $P$. The following lemma is a crucial component of our argument for establishing the perimeter law in the weak coupling limit of Ising lattice gauge theory. 
\begin{lmm}\label{evenlmm}
Let $P$ and $Q$ be two surfaces, and suppose that the dual surface $*P$ is closed. If there is a cube $B$ containing $P$ such that $*{*B}\cap Q$ consists of only internal plaquettes of $Q$, then $|P\cap Q|$ is even. 
\end{lmm}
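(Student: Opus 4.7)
The plan is to translate the problem into discrete exterior calculus and apply the Poincar\'e lemma. Let $f$ and $g$ be the $\zz_2$-valued $2$-forms corresponding to $P$ and $Q$. Writing the hypothesis that $*P$ is closed as $\delta(*f)=0$ and invoking Lemma~\ref{dlmm3} rewrites this as $df=0$ on the primal lattice. Since $P\subset B$, the form $f$ vanishes on every plaquette having a vertex outside $B$; in particular, it vanishes on every plaquette of the boundary of $*{*B}$.

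Next I would apply the last assertion of Lemma~\ref{dlmm2} (with $k=2$, $n=4$) to the cube $*{*B}$: since $f$ is a closed $\zz_2$-valued $2$-form on $*{*B}$ that vanishes on the boundary of $*{*B}$, there is a $1$-form $\alpha$ on $*{*B}$ satisfying $d\alpha=f$ on $*{*B}$ and $\alpha=0$ on the boundary of $*{*B}$. Since $f$ is supported in $B\subset *{*B}$, counting the intersection mod $2$ and swapping the order of summation gives
\[
|P\cap Q|\equiv \sum_{p\subset *{*B}} f(p)\, g(p)\equiv \sum_{p\subset *{*B}}(d\alpha)(p)\, g(p)\equiv \sum_{e}\alpha(e)\,(\delta g)(e)\pmod 2,
\]
where the last step uses the discrete integration-by-parts identity $(d\alpha)(p)\equiv\sum_{e\in\partial p}\alpha(e)\pmod 2$. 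Because $\alpha$ vanishes on boundary edges of $*{*B}$, the final sum effectively ranges only over internal edges of $*{*B}$.

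The remaining step is to see that each surviving term vanishes. Let $e$ be an internal edge of $*{*B}$; a straightforward coordinate check shows that every plaquette of $\zz^4$ containing $e$ lies entirely in $*{*B}$. Therefore any plaquette of $Q$ containing $e$ belongs to $*{*B}\cap Q$ and is, by hypothesis, an internal plaquette of $Q$, which forces $e$ not to be a boundary edge of $Q$. Hence $(\delta g)(e)=0$ for every internal edge of $*{*B}$, so every term $\alpha(e)(\delta g)(e)$ is zero and $|P\cap Q|$ is even. I do not expect a serious obstacle; the one delicate point is the choice of cube on which to apply Poincar\'e: the cube $*{*B}$ is just large enough that $f$ vanishes on its boundary, and just small enough that the hypothesis on $*{*B}\cap Q$ controls precisely the internal edges one needs.
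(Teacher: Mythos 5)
Your proof is correct and runs essentially parallel to the paper's, differing only in which side of the duality it stays on. The paper first applies the coderivative Poincar\'e lemma (Lemma~\ref{poinlmm2}) on the dual cube $*B$ to write $*f=\delta g$ with $g$ a $3$-form vanishing outside $*B$, then passes back via $f=d{*g}$ and decomposes $g$ into elementary $3$-forms; you instead invoke the boundary-constrained Poincar\'e lemma (the last part of Lemma~\ref{dlmm2}) directly on the primal cube $*{*B}$ to produce a $1$-form $\alpha$ with $d\alpha=f$ and $\alpha=0$ on the boundary, then integrate by parts. These are the same idea expressed in primal versus dual language: indeed Lemma~\ref{poinlmm2} is itself proved from the last part of Lemma~\ref{dlmm2}, and your observation that an internal edge of $*{*B}$ is contained only in plaquettes of $*{*B}$ is exactly the paper's observation (via Lemma~\ref{dlmm4}) that the duals of the boundary plaquettes of a $3$-cell of $*B$ are plaquettes of $*{*B}$. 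If anything, your direct primal-side phrasing is mildly more economical, avoiding the detour through elementary $3$-forms.
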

\begin{proof}
Let $f$ be the $2$-form corresponding to the surface $P$. Since $*P$ is closed, $\delta{*f}=0$. Moreover, $*f$ is zero outside $*B$ by Lemma \ref{dlmm4}. By Lemma~\ref{poinlmm2}, this implies that $*f = \delta g$ for some $3$-form $g$ that is zero outside $*B$. Therefore by Lemma \ref{dlmm3}, 
\[
f = *{*f} = d{*g}.  
\]
Let
\[
S := \sum_{p\in  Q} f(p),
\]
where the sum is in $\zz_2$. This is a finite sum since only finitely many terms are nonzero. The proof of the lemma will be complete if we can show that $S=0$. To that end, note that
\eq{
S &= \sum_{p\in Q} d{*g}(p) = \sum_{p\in Q} \sum_{e\in p} *g(e) = \sum_{e\in \gamma} *g(e),
}
where $\gamma$  is the boundary of $Q$. 

Let us say that a $3$-form on the dual lattice is `elementary' if its value is $1$ on a single $3$-cell, and $0$ elsewhere. Clearly, any $3$-form with finitely many $1$'s is a sum of elementary $3$-forms. In particular, the $3$-form $g$ is a finite sum of elementary $3$-forms that are zero outside $*B$. Since the Hodge star operator is additive, this shows that to prove that $S=0$, it suffices to prove    that 
\begin{equation}\label{g0e}
\sum_{e\in \gamma} *g_0(e)=0
\end{equation}
for any elementary $3$-form $g_0$ that is zero outside $*B$. Take any such $g_0$. Then $*g_0$ is an elementary $1$-form of the primal lattice. Let $c$ be the $3$-cell where $g_0$ is $1$ and let $e = *c$ be the edge where $*g_0$ is $1$. Let $P_0$ be the set of all plaquettes containing $e$. Then $P_0$ is the set of duals of the boundary plaquettes of $c$. In particular, the elements of $P_0$ are plaquettes of $*{*B}$. Let $Q_0$ be the set of plaquettes of $Q$ that contain $e$. Then 
\[
Q_0=P_0\cap Q\subseteq *{*B}\cap Q.
\]
Thus, the elements of $Q_0$ are all internal plaquettes of $Q$, and so $e$ cannot be a boundary edge of $Q$. Consequently, $|Q_0|$ must be even. Thus, $e\not\in \gamma$ and hence \eqref{g0e} holds. As noted before, this implies that $S=0$, completing the proof.  
\end{proof}

\subsection{Vortices}
Let $\Sigma$ be the set of all spin configurations on the edges of $\zz^4$. Given a configuration $\sigma\in \Sigma$, we will say that plaquette $p$ is a `negative plaquette' for this configuration if $\sigma_p=-1$. In the following, two plaquettes will be called adjacent if they share a common edge. A surface will be called connected if it is connected with respect to this notion of adjacency.


Given a spin configuration $\sigma$, a surface $P$ will be called a {\it vortex} if the dual surface $*P$  is closed and connected, and every member of $P$ is a negative plaquette for the configuration $\sigma$. The term `vortex' comes from  similar usage in the physics literature. Vortices have played an important role in the physics literature on quark confinement in lattice gauge theories, starting from~\cite{no73, tassie73, nambu74, parisi75, ks74, thooft74b}. 
\begin{lmm}\label{vorlmm}
For any configuration $\sigma$, the set of negative plaquettes of $\sigma$ is a disjoint union of vortices.
\end{lmm}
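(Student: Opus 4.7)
Let $N$ denote the set of negative plaquettes for the given configuration $\sigma$. The strategy is to first show that the dual surface $*N$ is closed, then decompose $*N$ into connected components, verify each component is itself closed, and finally define the vortices as the primal duals of these components.

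The first step is a Bianchi-type identity. Fix any $3$-cell $c$ of $\zz^4$. Its six bounding plaquettes contain each of its twelve edges exactly twice, so $\prod_{p \in \partial c}\sigma_p = 1$ and hence an even number of the plaquettes of $\partial c$ are negative. Under Hodge duality, the six plaquettes of $\partial c$ correspond precisely to the six dual plaquettes containing the dual edge $*c$, so every dual edge is contained in an even number of plaquettes of $*N$. Equivalently, if $f$ denotes the $2$-form corresponding to $N$, then $\delta(*f) = 0$, i.e.\ $*N$ has no boundary and is therefore closed.

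The second step decomposes $*N$ into its connected components $C_1, C_2, \ldots$ under the plaquette-adjacency relation (two plaquettes being adjacent when they share an edge). The key observation is that if a dual edge $e$ is contained in some plaquette of $C_i$, then every plaquette of $*N$ containing $e$ must belong to $C_i$, because all such plaquettes are pairwise adjacent through $e$ and hence lie in the same connected component. Consequently, the number of plaquettes of $C_i$ containing $e$ equals the number of plaquettes of $*N$ containing $e$, which is even by the previous paragraph. This shows $C_i$ is closed.

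Finally, set $P_i := *C_i$. Since $*{*C_i} = C_i$ (we work with $\zz_2$, so there are no sign issues), each $P_i$ satisfies $*P_i = C_i$, which is closed and connected by construction. Moreover $P_i \subseteq *{*N} = N$, so every plaquette of $P_i$ is negative, and each $P_i$ is a vortex. The decomposition $N = \bigsqcup_i P_i$ is disjoint because the $C_i$ are pairwise disjoint as distinct connected components. The only subtle point in the argument is the verification that each connected component $C_i$ inherits closedness from $*N$; this reduces cleanly to the adjacency observation above and does not require any additional structure.
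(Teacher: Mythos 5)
Your proof is correct and takes the same route as the paper's: show that the dual surface $*N$ is closed, then observe that each connected component is itself closed because all plaquettes of $*N$ containing a given dual edge lie in the same component. The only superficial difference is that you derive closedness of $*N$ by a direct Bianchi-identity count on each $3$-cell (each of the twelve edges of a $3$-cell lies in exactly two of its six bounding plaquettes, so $\prod_{p\in\partial c}\sigma_p=1$), whereas the paper invokes the abstract identities $ddg=0$ and $\delta=\pm\,{*d*}$ from Lemmas~\ref{dlmm1} and~\ref{dlmm3}; these are of course the same fact presented at different levels of abstraction.
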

\begin{proof}
Let $P$ be the set of negative plaquettes. Let $f$ be the $2$-form corresponding to $P$ and $g$ be the $1$-form corresponding to $\sigma$. Note that $f=dg$. Thus, by Lemma \ref{dlmm1}, $df=0$ and hence by Lemma \ref{dlmm3}, 
\[
\delta{*f} = {*d}f = 0.
\]
The $2$-form $*f$ corresponds to the dual surface $*P$. The above identity implies that every edge of the dual lattice belongs to an even number of elements of $*P$. In other words, $*P$ is a closed surface. 

Finally, note that since there is no edge that is shared by two plaquettes in two distinct connected components of $*P$, each connected component of $*P$ must also be a closed surface. This completes the proof.
\end{proof}

\section{Duality relations}\label{dualsec}
Recall the four-dimensional Ising lattice gauge theory on $B_N$ defined in Section \ref{isingsec}. We will refer to this model as having `free boundary condition', since no condition is imposed on the spins on boundary edges.  Let $Z_N(\beta)$ be the partition function of Ising lattice gauge theory with free boundary condition on $B_N$ at inverse coupling strength $\beta$. 

There is another kind of boundary condition that will be important for us. This is the condition that $\sigma_p=1$ for all boundary plaquettes $p$ of $B_N$. We will refer to this as the zero boundary condition (since the element $1$ is the zero of the group $\zz_2$). 

\subsection{Duality for the partition function}
We will now express $Z_N(\beta)$ in terms of an Ising lattice gauge theory on the dual cube $*B_N$. Such dualities are well known to physicists. One of the earliest occurrences in the context of lattice gauge theories was in~\cite{wegner71}, where duals of Ising models were constructed using lattice gauge theories in dimensions three and higher. Duality for $U(1)$ lattice gauge  theory with Villain action has been used in several important papers; for example, in~\cite{frohlichspencer82} duality was used for the  proof of the deconfinement transition in four-dimensional $U(1)$ lattice gauge theory. For an extensive discussion of such duality relations, see~\cite{druhlwagner82}.
\begin{thm}\label{zpropthm}
Let $P_N$ be the set of plaquettes of $B_N$ and $E_N$ be the set of edges of $B_N$. Let $a_N$ be the number of closed $\zz_2$-valued $1$-forms on $*B_N$ and $b_N$ be the number of boundary plaquettes of $*B_N$.  Let
\begin{equation}
\lambda := -\frac{1}{2}\log\tanh \beta,\quad \alpha := (\cosh\beta \sinh\beta)^{1/2}, \label{lambdadef}
\end{equation}
and let $Z_N^*(\lambda)$ be the partition function of Ising lattice gauge  theory with zero boundary condition on  $*B_N$ at inverse coupling strength $\lambda$. Then
\eq{
Z_N(\beta) = \frac{2^{|E_N|}\alpha^{|P_N|}e^{-\lambda b_N}}{a_N}  Z_N^*(\lambda).
}
\end{thm}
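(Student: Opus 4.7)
The strategy is a high-temperature (character) expansion of each Boltzmann factor, followed by the primal--dual correspondence of Lemma~\ref{dproplmm} to re-identify the resulting constrained sum as the partition function of Ising gauge theory on the dual cube with zero boundary condition. Concretely, using $e^{\beta\sigma_p} = \cosh\beta\,(1+\sigma_p\tanh\beta)$ and expanding the product over $p\in P_N$ gives
\[
Z_N(\beta) \;=\; (\cosh\beta)^{|P_N|}\sum_{Q\subseteq P_N}(\tanh\beta)^{|Q|}\sum_{\sigma\in\Sigma_N}\prod_{p\in Q}\sigma_p.
\]
Writing $\prod_{p\in Q}\sigma_p=\prod_{e\in E_N}\sigma_e^{n_e(Q)}$ with $n_e(Q)$ the number of plaquettes of $Q$ containing $e$, the inner sum collapses to $2^{|E_N|}$ when every $n_e(Q)$ is even and vanishes otherwise. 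Identifying $Q$ with its $\zz_2$-valued $2$-form $f_Q$ as in Section~\ref{strucsec}, this parity condition is exactly $\delta f_Q = 0$, so
\[
Z_N(\beta) \;=\; 2^{|E_N|}(\cosh\beta)^{|P_N|}\sum_{f:\,\delta f=0}(\tanh\beta)^{|Q_f|}.
\]

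Next, I would apply Lemma~\ref{dproplmm} with $n=4$, $k=2$, $G=\zz_2$. This provides an $a_N$-to-$1$ surjection sending each $\zz_2$-valued $1$-form $g$ on $*B_N$ with $dg = 0$ on the boundary of $*B_N$ to the $2$-form $f$ on $B_N$ satisfying $*f = dg$ on $*B_N$. Identifying such a $g$ with the spin configuration $\tau_e := (-1)^{g(e)}$ on edges of $*B_N$, the constraint $dg = 0$ on the boundary translates to $\tau_{p^*}=1$ on every boundary plaquette $p^*$ of $*B_N$, which is precisely the zero boundary condition. Moreover $|Q_f|$ equals the number of plaquettes $p^*$ of $*B_N$ with $dg(p^*)=1$: by Lemma~\ref{dlmm4} the plaquettes of $P_N$ correspond bijectively to the internal plaquettes of $*B_N$, and on the boundary plaquettes $dg$ vanishes by assumption, so $|Q_f|$ equals the total number of negative plaquettes of $\tau$ in $*B_N$.

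Finally, with $\tanh\beta = e^{-2\lambda}$ and $M = |P_N|+b_N$ the total number of plaquettes of $*B_N$ (by Lemma~\ref{dlmm4}), the relation $|Q_f| = \tfrac{1}{2}(M - \sum_{p^*}\tau_{p^*})$ gives $(\tanh\beta)^{|Q_f|} = e^{-\lambda M}\,e^{-\lambda H_N^*(\tau)}$, where $H_N^*$ denotes the Hamiltonian analogous to \eqref{hamdef} on $*B_N$. Summing over $\tau$ with the zero boundary condition produces $Z_N^*(\lambda)$, and the algebraic identity $(\cosh\beta)^{|P_N|}e^{-\lambda |P_N|}=\alpha^{|P_N|}$ converts the remaining prefactor into $\alpha^{|P_N|}e^{-\lambda b_N}$ as stated. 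The main obstacle is the careful bookkeeping around Lemma~\ref{dproplmm}: one must verify that the boundary constraint it produces matches exactly the zero boundary condition on $*B_N$, that the over-counting multiplicity is precisely $a_N$, and that the plaquettes of $*B_N$ split, via Lemma~\ref{dlmm4}, into $|P_N|$ internal and $b_N$ boundary ones, so that the prefactors rearrange into the form of the theorem.
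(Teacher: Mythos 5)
Your proposal is correct and follows essentially the same route as the paper: a high-temperature expansion of each Boltzmann factor, the observation that summing over $\sigma$ forces the constraint $\delta f = 0$, and the primal--dual correspondence of Lemma~\ref{dproplmm} to reinterpret the constrained sum as the zero-boundary-condition partition function on $*B_N$ with multiplicity $a_N$. The only difference is cosmetic bookkeeping: the paper factors $e^{\beta\sigma_p} = \alpha(e^\lambda + \sigma_p e^{-\lambda})$ so that the weight $e^{\lambda\sum\tau_p}$ appears immediately, while you factor $e^{\beta\sigma_p} = \cosh\beta(1+\sigma_p\tanh\beta)$, carry the weight $(\tanh\beta)^{|Q|}$, and convert to the $\alpha,\lambda$ form at the end; the two are trivially equivalent.
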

\begin{proof}
Let $\Sigma_N := \{-1,1\}^{E_N}$. For $\sigma\in \Sigma_N$ and $p\in P_N$, let $\sigma_p$ be defined as in Section \ref{isingsec}. Then
\begin{equation}
Z_N(\beta) = \sum_{\sigma\in \Sigma_N} \prod_{p\in P_N} e^{\beta \sigma_p}\,.\label{znbeta}
\end{equation}
Since $\sigma_p$ is either $1$ or $-1$, 
\[
e^{\beta \sigma_p} = \cosh \beta + \sigma_p \sinh\beta = \alpha(e^\lambda + \sigma_p e^{-\lambda}).
\]
Substituting this in \eqref{znbeta}, we get
\begin{equation}
Z_N(\beta) = \alpha^{|P_N|}\sum_{\sigma\in \Sigma_N} \prod_{p\in P_N}(e^\lambda + \sigma_p e^{-\lambda}).\label{zexpr}
\end{equation}
Expanding the product on the right gives
\eq{
\prod_{p\in P_N}(e^\lambda + \sigma_p e^{-\lambda}) &= \sum_{\kappa\in \{0,1\}^{P_N}} \prod_{p\in P_N}\sigma_p^{\kappa_p} e^{\lambda(1-2\kappa_p)}.
}
Making the change of variable $\tau_p=1-2\kappa_p$, this gives
\eq{
\prod_{p\in P_N}(e^\lambda + \sigma_p e^{-\lambda}) &= \sum_{\tau \in \Gamma_N} \prod_{p\in P_N}\sigma_p^{(1-\tau_p)/2} e^{\lambda \tau_p},
}
where
\eq{
\Gamma_N := \{-1,1\}^{P_N}.
}
Substituting this back in \eqref{zexpr}, we get
\begin{align}
Z_N(\beta) &= \alpha^{|P_N|}\sum_{\sigma\in \Sigma_N} \sum_{\tau\in \Gamma_N}\prod_{p\in P_N} \sigma_p^{(1-\tau_p)/2}e^{\lambda \tau_p}\nonumber\\
&= \alpha^{|P_N|}\sum_{\tau\in \Gamma_N} e^{\lambda\sum_{p\in P_N} \tau_p}\Big(\sum_{\sigma\in \Sigma_N} \prod_{e\in E_N} \sigma_e^{f(\tau,e)}\Big),\label{znbeta2}
\end{align}
where
\[
f(\tau, e):= \sum_{p\in P_N, p\ni e} \frac{1-\tau_p}{2}.
\]
In other words, $f(\tau,e)$ counts the number of plaquettes $p\in P_N$ containing the edge $e$ for which $\tau_p=-1$. For convenience, let us call this set of plaquettes $P_N(e)$. The number $f(\tau, e)$ is even if and only if 
\begin{equation}
\prod_{p\in P_N(e)} \tau_p = 1.\label{prodcond}
\end{equation}
Consequently,
\begin{align}
\sum_{\sigma\in \Sigma_N} \prod_{e\in E_N} \sigma_e^{f(\tau,e)}  &= \prod_{e\in E_N} (1+ (-1)^{f(\tau,e)}) \nonumber\\
&= 
\begin{cases}
2^{|E_N|} &\text{ if  $\prod_{p\in P_N(e)} \tau_p = 1$ for every $e\in E_N$,}\\
0 &\text{ otherwise.}
\end{cases}
\label{sigmaform}
\end{align}
Let $\Gamma_N'$ be the set of all $\tau\in \Gamma_N$ for which \eqref{prodcond} holds for every $e$. By \eqref{znbeta2} and \eqref{sigmaform}, we get
\begin{equation}
Z_N(\beta) = 2^{|E_N|}\alpha^{|P_N|}\sum_{\tau\in \Gamma_N'} e^{\lambda \sum_{p\in P_N} \tau_p}.\label{znbeta3}
\end{equation}
Now note that any $\tau \in \Gamma_N$ naturally defines a $\zz_2$-valued $2$-form $f$ on  $B_N$, through the correspondence that sends spin $1$ to the element $0$ of $\zz_2$ and spin $-1$ to the element $1$ of $\zz_2$. In terms of $f$, the condition \eqref{prodcond} precisely means that the $1$-form $\delta f$ vanishes on the edge~$e$. Thus, $\tau\in \Gamma_N'$ if and only if the corresponding $f$ satisfies $\delta f = 0$. On the other hand, any $\zz_2$-valued $1$-form $g$ on $*B_N$ corresponds to  a spin configuration $\sigma$ on the set of edges of $*B_N$, and $dg=0$ on the boundary of $*B_N$ if and only if $\sigma$ satisfies the zero boundary condition. Take any such $g$ and suppose that $dg = *f$. Let $P_N^*$ denotes the set of plaquettes of $*B_N$, and let  $*P_N$ be the set of dual plaquettes of $P_N$, as usual. Note that $*P_N\subseteq P_N^*$, and by Lemma \ref{dlmm4}, $P_N^*\setminus {*P_N}$ is the set of boundary plaquettes of $*B_N$. Notice that for any $p\in *P_N$, 
\[
\frac{1-\sigma_{p}}{2} = dg(p) = *f(p)= f(*p) =\frac{1-\tau_{*p}}{2}.
\]
Thus, $\sigma_p = \tau_{*p}$ for all $p\in *P_N$. On the other hand, if $p\in P_N^*\setminus {*P_N}$, then $p$ is a boundary plaquette of $*B_N$ and hence 
\[
\frac{1-\sigma_p}{2}=dg(p)=0,
\]
implying that $\sigma_p=1$. Thus,
\[
\sum_{p\in P_N^*} \sigma_p = b_N + \sum_{p\in P_N} \tau_p.
\]
Let $*\Sigma_N$ be the set of spin configurations on the edges of $*B_N$ that satisfy the zero boundary condition. By Lemma~\ref{dproplmm}, there are exactly $a_N$ configurations $\sigma\in *\Sigma_N$ that correspond to a given $\tau\in \Gamma_N'$ in the above way. Thus, the above identity shows that
\[
\sum_{\tau\in \Gamma_N'} e^{\lambda \sum_{p\in P_N} \tau_p} = \frac{e^{-\lambda b_N}}{a_N}\sum_{\sigma\in *\Sigma_N} e^{\lambda\sum_{p\in P_N^*} \sigma_p}.
\]
Plugging this into \eqref{znbeta3} completes the proof of the theorem.
\end{proof}

\subsection{Duality for expected values}\label{duality2}
Consider Ising lattice gauge theory on the cube $B_N$ with inverse coupling strength $\beta$ and free boundary condition. As usual, let $\Sigma_N$ be the space of configurations. Let $f$ be any real-valued function on $\Sigma_N$. We will denote the expected value of $f$ under this theory by $\smallavg{f}_{N, \beta}$. Similarly, if $*\Sigma_N$ is the set of configurations for Ising lattice gauge theory with zero boundary condition on the dual cube $*B_N$, and $f$ is a real-valued function on $*\Sigma_N$, then the expected value of $f$ at inverse coupling strength $\lambda$ will be denoted by~$\smallavg{f}_{N, \lambda}^*$.

For a finite collection of plaquettes $P$, define
\begin{equation}\label{fundef}
\pi_P(\sigma):= \prod_{p\in P} \sigma_p, \quad \psi_P(\sigma) := \sum_{p\in P} \sigma_p. 
\end{equation}
The following proposition relates the expected value of $\pi_P$ under Ising lattice gauge theory with a certain expectation involving $\psi_{*P}$ in the dual model. 
\begin{thm}\label{exppropthm}
Let $P$ be a finite collection of plaquettes and let $*P$ be the set of dual plaquettes of $P$. Let $N$ be so large that $P$ is contained in $B_N$.  Let $\lambda$ be related to $\beta$ as in \eqref{lambdadef}. Then 
\eq{
\smallavg{\pi_P}_{N,\beta} = \smallavg{e^{-2\lambda \psi_{*P}}}_{N,\lambda}^*.
}
\end{thm}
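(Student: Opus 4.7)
The plan is to repeat the manipulations in the proof of Theorem \ref{zpropthm} while carrying along the extra factor $\pi_P(\sigma)=\prod_{p\in P}\sigma_p$, and then divide by $Z_N(\beta)$ at the end. Writing $\smallavg{\pi_P}_{N,\beta}$ as a ratio with denominator $Z_N(\beta)$ and expanding $e^{\beta\sigma_{p'}}=\alpha(e^{\lambda}+\sigma_{p'}e^{-\lambda})$ exactly as in \eqref{zexpr}, the numerator becomes
\begin{align*}
\alpha^{|P_N|}\sum_{\tau\in \Gamma_N} e^{\lambda\sum_{p'\in P_N}\tau_{p'}}\sum_{\sigma\in \Sigma_N}\prod_{e\in E_N}\sigma_e^{g(\tau,e)},
\end{align*}
where $g(\tau,e):=f(\tau,e)+m_P(e)$ with $m_P(e):=|\{p\in P:p\ni e\}|$. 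The inner $\sigma$-sum again factorizes across edges and equals $2^{|E_N|}$ when every $g(\tau,e)$ is even and vanishes otherwise. The surviving $\tau$'s are exactly those for which the associated $\zz_2$-valued $2$-form $F_\tau$ on $B_N$, defined by $F_\tau(p):=(1-\tau_p)/2$, satisfies $\delta F_\tau=\delta F_P$, where $F_P$ is the $2$-form indicator of $P$.

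To enumerate them, I apply Lemma \ref{dproplmm} to the shifted $2$-form $F_\tau-F_P$, which satisfies $\delta(F_\tau-F_P)=0$. This gives an $a_N$-to-$1$ correspondence between the valid $\tau$'s and $\zz_2$-valued $1$-forms $g$ on $*B_N$ with $dg=0$ on the boundary of $*B_N$, via $dg=*(F_\tau-F_P)$. Translating each such $g$ into a spin configuration $\sigma\in *\Sigma_N$ obeying the zero boundary condition through $(1-\sigma_p)/2=dg(p)$, exactly as in the proof of Theorem~\ref{zpropthm}, one finds that for a plaquette $p\in *P_N$ with primal $*p\in P_N$,
\begin{align*}
\tfrac{1-\sigma_p}{2}\equiv \tfrac{1-\tau_{*p}}{2}+\mathbf{1}_{*p\in P}\pmod 2,
\end{align*}
so $\sigma_p=\tau_{*p}$ when $*p\notin P$ and $\sigma_p=-\tau_{*p}$ when $*p\in P$, while $\sigma_p=1$ at every boundary plaquette of $*B_N$.

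The remaining work is a clean accounting of signs: the insertion $\pi_P$ is precisely what sends $\tau_{*p}\mapsto -\sigma_p$ on $P$, so
\begin{align*}
\sum_{p'\in P_N}\tau_{p'}=\sum_{p'\in P_N\setminus P}\sigma_{*p'}-\sum_{p'\in P}\sigma_{*p'}=\sum_{p\in P_N^*}\sigma_p-b_N-2\psi_{*P}(\sigma),
\end{align*}
since the $b_N$ boundary plaquettes each contribute $\sigma_p=1$. Plugging this in and using the $a_N$-to-$1$ correspondence rewrites the numerator as
\begin{align*}
\frac{2^{|E_N|}\alpha^{|P_N|}e^{-\lambda b_N}}{a_N}\sum_{\sigma\in *\Sigma_N}e^{\lambda\sum_{p\in P_N^*}\sigma_p}e^{-2\lambda\psi_{*P}(\sigma)},
\end{align*}
and dividing by $Z_N(\beta)$ via Theorem \ref{zpropthm} cancels every combinatorial prefactor and produces exactly $\smallavg{e^{-2\lambda\psi_{*P}}}_{N,\lambda}^*$. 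The only real hazard is pinning down the correct sign in the $\tau\leftrightarrow \sigma$ dictionary on $P$, so that the extra factor $\pi_P$ reorganizes into the weight $e^{-2\lambda\psi_{*P}}$ rather than its reciprocal.
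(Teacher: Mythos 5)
Your proposal is correct and follows essentially the same path as the paper's proof: the shift $F_\tau \mapsto F_\tau - F_P$ (i.e. flipping $\tau$ on $P$) and the resulting reorganization of the weight into $e^{-2\lambda\psi_{*P}}$ is exactly the device the paper encodes via the auxiliary vectors $\tau'$ and $\tau''$ and the bijection $\tau\mapsto\tau''$ between $\Gamma_N''$ and $\Gamma_N'$. The only cosmetic difference is that you invoke Lemma~\ref{dproplmm} directly on the shifted $2$-form, whereas the paper routes through the $\Gamma_N'$-to-$*\Sigma_N$ correspondence already set up in the proof of Theorem~\ref{zpropthm}; the sign bookkeeping $\sigma_p=\pm\tau_{*p}$ and the final cancellation against $Z_N(\beta)$ are identical.
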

\begin{proof}
Let all notation be as in the proof of Theorem \ref{zpropthm}. Then 
\eq{
\smallavg{\pi_P}_{N,\beta} &= \frac{1}{Z_N(\beta)}\sum_{\sigma\in \Sigma_N} \prod_{p\in P} \sigma_p \prod_{p\in P_N} e^{\beta\sigma_p}.
}
For each $\tau\in \Gamma_N$, define a vector $\tau'\in \{0,1,2\}^{P_N}$ as:
\eq{
\tau'_p =
\begin{cases}
(1-\tau_p)/2 &\text{if } p\not \in P,\\
(1-\tau_p)/2+ 1&\text{if } p\in P.
\end{cases}
}
Proceeding as in the proof of Theorem \ref{zpropthm}, we get
\begin{align}
\sum_{\sigma\in \Sigma_N} \prod_{p\in P} \sigma_p \prod_{p\in P_N} e^{\beta\sigma_p} &= \alpha^{|P_N|}\sum_{\sigma\in \Sigma_N} \Big(\prod_{p\in P} \sigma_p\Big)\Big(\sum_{\tau\in \Gamma_N}\prod_{p\in P_N} \sigma_p^{(1-\tau_p)/2}e^{\lambda \tau_p}\Big)\nonumber\\
&=  \alpha^{|P_N|}\sum_{\sigma\in \Sigma_N} \sum_{\tau\in \Gamma_N}\prod_{p\in P_N} \sigma_p^{\tau_p'}e^{\lambda \tau_p}\nonumber\\
&= \alpha^{|P_N|}\sum_{\tau\in \Gamma_N} e^{\lambda\sum_{p\in P_N} \tau_p}\Big(\sum_{\sigma\in \Sigma_N} \prod_{e\in E_N} \sigma_e^{h(\tau,e)}\Big),\label{expform}
\end{align}
where
\[
h(\tau, e):= \sum_{p\in P_N(e)} \tau'_p.
\] 
Now, for each $\tau\in \Gamma_N$, define another vector $\tau''\in \Gamma_N$ as 
\eq{
\tau''_p &= 
\begin{cases}
\tau_p &\text{if } p\not \in P,\\
-\tau_p &\text{if } p\in P. 
\end{cases}
}
Note that if $p\not \in P$, then $\tau_p'$ is even if and only if $\tau_p = 1$. On the other hand, if $p\in P$, then $\tau_p'$ is even if and only if $\tau_p = -1$. Thus, for any $p$, $\tau'_p$ is even if and only if $\tau''_p=1$. From this observation it follows easily that $h(\tau, e)$ is even if and only if 
\begin{equation}
\prod_{p\in P_N(e)} \tau_p'' = 1.\label{prodcond2}
\end{equation}
Consequently,
\begin{align}
\sum_{\sigma\in \Sigma_N} \prod_{e\in E_N} \sigma_e^{h(\tau,e)}  &= \prod_e (1+ (-1)^{h(\tau,e)}) \nonumber\\
&= 
\begin{cases}
2^{|E_N|} &\text{ if  $\prod_{p\in P_N(e)} \tau_p'' = 1$ for every $e\in E_N$,}\\
0 &\text{ otherwise.}
\end{cases}
\label{sigmaform3}
\end{align}
Let $\Gamma_N''$ be the set of all $\tau\in \Gamma_N$ that satisfy \eqref{prodcond2}. Then by \eqref{expform} and \eqref{sigmaform3}, 
\eq{
\sum_{\sigma\in \Sigma_N} \prod_{p\in P} \sigma_p \prod_{p\in P_N} e^{\beta\sigma_p} &= 2^{|E_N|} \alpha^{|P_N|} \sum_{\tau\in \Gamma_N''} e^{\lambda \sum_{p\in P_N} \tau_p}.
}
Now recall the set $\Gamma_N'$ from the proof of Theorem \ref{zpropthm}. Note that $\tau\in \Gamma_N''$ if and only if $\tau''\in \Gamma_N'$. Moreover, the map $\tau\mapsto \tau''$ is a bijection, which is its own inverse. Thus, 
\eq{
\sum_{\tau\in \Gamma_N''} e^{\lambda \sum_{p\in P_N} \tau_p} &= \sum_{\tau\in \Gamma_N'} e^{\lambda \sum_{p\in P_N} \tau''_p} = \sum_{\tau\in \Gamma_N'} e^{ - 2\lambda \sum_{p\in P} \tau_p + \lambda \sum_{p\in P_N} \tau_p}.
}
From the last part of the proof of Theorem \ref{zpropthm}, we know how to sum over $\tau\in \Gamma_N'$ by identifying $\tau_p$ with $\sigma_p$ for some configuration $\sigma$ on $*\Sigma_N$ that satisfies the zero boundary condition. Using the formula for $Z_N(\beta)$ from Theorem \ref{zpropthm}, this completes the proof.
\end{proof}

\section{Decay of correlations}\label{corsec}
The duality relations of Section \ref{dualsec} allow us to transfer calculations from the weak coupling regime to the strong coupling regime. This is very helpful since the strong coupling regime has exponential decay of correlations. The easiest way to prove exponential decay of correlations at strong coupling is by using Dobrushin's condition~\cite{dobrushin68, dobrushin70}. Although Dobrushin's condition was originally developed for proving the uniqueness of Gibbs states, it was later realized that the criterion can be used to prove exponential decay of correlations. A variant of this result is presented below. Since I could not find the exact statement in the literature, a short proof  is included. 

\subsection{Correlation decay by Dobrushin's condition}
Let $(\Omega, \mf)$ be a measurable space. Recall that the total variation distance between two probability measures $\nu$ and $\nu'$ on $(\Omega, \mf)$ is defined as
\[
\tv(\nu, \nu') := \sup_{A\in \mf} |\nu(A)-\nu'(A)|.
\]
Recall that the total variation distance can also be represented as
\begin{equation}\label{tvcoup}
\inf\pp(X\ne Y),
\end{equation}
where the infimum is taken over all pairs of $\Omega$-valued random variables $(X,Y)$ defined on the same probability space, such that $X$ has law $\nu$ and $Y$ has law $\nu'$. Yet another representation is
\[
\frac{1}{2}\sup\biggl|\int fd\nu - \int f d\nu'\biggr|,
\]
where the supremum is over all measurable $f:\Omega \to [-1,1]$. (For the equivalence of these representations, see~\cite[Exercise 3.6.2]{durrett}.)

Now suppose that $\Omega$ is a complete separable metric space. Take any $n\ge 1$, and let  $\mu$ and $\mu'$ be two probability measures on $\Omega^n$. In the following, we shall use the notation $\bar{x}^i$ to denote the element of $\Omega^{n-1}$ obtained by dropping the $i^{\mathrm{th}}$ coordinate of a vector $x\in \Omega^n$. For any $x\in \Omega$, $\mu_i(\cdot|\bar{x}^i)$ will denote the conditional law (under $\mu$) of the $i^{\mathrm{th}}$ coordinate given that the vector of the remaining coordinates equals $\bar{x}^i$. Define $\mu_i'$ similarly. Since $\Omega$ is a Polish space, regular conditional probabilities exist (see \cite[Section 5.1.3]{durrett}) and therefore our definitions of $\mu_i$ and $\mu_i'$ make sense.  

Suppose that for any $i$ and any $x,y\in \Omega$, the following condition holds:
\begin{equation}\label{tvcond}
\tv(\mu_i(\cdot|\bar{x}^i), \mu^\prime_i(\cdot|\bar{y}^i)) \le \sum_{j=1}^n \alpha_{ij} 1_{\{x_j\ne y_j\}} + h_i,
\end{equation}
where $\alpha_{ij}$'s and $h_i$'s are fixed nonnegative real numbers, and $1_{\{x_j\ne y_j\}} = 1$ if $x_j\ne y_j$ and $0$ otherwise.
Assume that
\begin{equation}\label{dobcond}
s := \max_{1\le i\le n} \sum_{j=1}^n \alpha_{ij} < 1.
\end{equation}
The above assumption is a version of Dobrushin's condition. Let $Q$ be the matrix $(\alpha_{ij})_{1\le i,j\le n}$ and suppose that $P$ is a Markov transition matrix on $\{1,\ldots,n\}$ such that $Q \le sP$ elementwise. For each $i$ and $j$, let $\tau_{ij}$ be the first hitting time of state $j$ starting from state $i$ of a Markov chain with transition kernel $P$. Then the following holds.
\begin{thm}\label{decaythm}
Let all notation be as above, and suppose that \eqref{tvcond} and \eqref{dobcond} hold. Let $Z$ and $Z'$ be two $\Omega^n$-valued random vectors, with laws $\mu$ and $\mu'$ respectively. Take any $A\subseteq \{1,\ldots,n\}$. Let $\nu$ be the law of $(Z_i)_{i\in A}$ and $\nu'$ be the law of $(Z_i')_{i\in A}$. Then
\[
\tv(\nu, \nu') \le \frac{1}{1-s} \sum_{i\in  A} \sum_{j=1}^n \ee(s^{\tau_{ij}})h_j. 
\]
\end{thm}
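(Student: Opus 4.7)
The plan is to construct a coupling $(Z,Z')$ of $\mu$ and $\mu'$ whose disagreement probabilities $d_i := \pp(Z_i \ne Z'_i)$ satisfy the coordinatewise recursion $d_i \le \sum_j \alpha_{ij} d_j + h_i$, then invert this linear inequality and compare the resulting Neumann series with hitting time generating functions of the chain $P$.

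For the coupling, I would run a random-scan Gibbs sampler on the product space $\Omega^n\times \Omega^n$: at each step, pick a coordinate $I_t$ uniformly in $\{1,\ldots,n\}$, and replace $(X_{I_t},X'_{I_t})$ by a pair drawn from the optimal (maximal) coupling of $\mu_{I_t}(\cdot|\bar{X}^{I_t})$ and $\mu'_{I_t}(\cdot|\bar{X'}^{I_t})$; the other coordinates are left untouched. The marginal chains are the standard random-scan Gibbs samplers for $\mu$ and $\mu'$, so they converge weakly to $\mu$ and $\mu'$ respectively. Writing $d_i(t)=\pp(X_i^{(t)}\ne X_i'^{(t)})$ and using that maximal couplings attain the total variation distance, the Dobrushin hypothesis \eqref{tvcond} gives
\[
d_i(t+1) \le \tfrac{1}{n}\Big(\sum_j \alpha_{ij} d_j(t) + h_i\Big) + (1-\tfrac{1}{n})\,d_i(t).
\]
Since $(d_i(t))$ lies in the compact set $[0,1]^n$, I would extract a subsequential limit $d_i^*$ along a subsequence where the joint laws converge (tightness follows from tightness of the marginals on the Polish space $\Omega^n$). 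The limit couples $\mu$ and $\mu'$, and passing to the limit in the recursion yields $d_i^* \le \sum_j \alpha_{ij}d_j^* + h_i$, i.e.\ $(I-Q)d^* \le h$ in the entrywise sense.

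Next, because $\|Q\|_\infty \le s < 1$, the matrix $I-Q$ is invertible with $(I-Q)^{-1}=\sum_{k\ge 0} Q^k$ having nonnegative entries, so $d^* \le (I-Q)^{-1}h$. Using $Q \le sP$ entrywise and the nonnegativity of $P$, an easy induction gives $Q^k \le s^kP^k$, hence $(I-Q)^{-1}_{ij} \le \sum_{k\ge 0} s^k P^k_{ij}$. The standard first-passage decomposition of a Markov chain factors this Green function as
\[
\sum_{k\ge 0} s^k P^k_{ij} = \ee_i[s^{\tau_{ij}}] \cdot \sum_{k\ge 0} s^k P^k_{jj} = \frac{\ee_i[s^{\tau_{ij}}]}{1-\ee_j[s^{\tau^+_{jj}}]},
\]
where $\tau^+_{jj}\ge 1$ is the first return time; since $\ee_j[s^{\tau^+_{jj}}] \le s$, the denominator is at least $1-s$, giving $\sum_{k\ge 0} s^k P^k_{ij} \le (1-s)^{-1}\ee_i[s^{\tau_{ij}}]$. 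Combining, $d^*_i \le (1-s)^{-1}\sum_j \ee_i[s^{\tau_{ij}}]\,h_j$.

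Finally, for the marginal TV bound, I would use that for any coupling $(Z,Z')$,
\[
\tv(\nu,\nu') \le \pp\bigl(\exists\,i\in A:\ Z_i\ne Z_i'\bigr) \le \sum_{i\in A} \pp(Z_i\ne Z_i'),
\]
applied to the limiting coupling, which yields exactly the asserted bound. The main obstacle is the first step: extracting a stationary (or at least subsequential-limit) coupling for which the marginals are exactly $\mu$ and $\mu'$ while the recursion passes to the limit. This requires invoking tightness and continuity of the conditional maximal-coupling construction with respect to the state; the Polish assumption on $\Omega$ together with the existence of regular conditional probabilities (cited in the excerpt) is exactly what makes this step go through cleanly.
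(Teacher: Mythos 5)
Your overall strategy is the same as the paper's: a random-scan Gibbs-sampler coupling in which the chosen coordinate is updated by the maximal coupling of the two conditionals, leading to the recursion $(I-Q)\ell\le h$, followed by a Neumann-series / Green-function comparison with the chain $P$. Your Green-function step is fine and in fact slightly sharper than what the paper uses (the paper simply bounds $\sum_{k\ge 0}s^k p_{ij}^{(k)}\le \ee\bigl(\sum_{k\ge\tau_{ij}}s^k\bigr)=\ee(s^{\tau_{ij}})/(1-s)$, while you factor through $G_s(j,j)$; both give the same final bound).

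The genuine gap is in the first step, and you have flagged the right place but proposed the wrong fix. You assert that the two marginal chains ``converge weakly to $\mu$ and $\mu'$'' and then extract a subsequential limit of the joint laws to obtain a coupling. Neither claim is justified by the hypotheses: nothing in \eqref{tvcond}--\eqref{dobcond} gives ergodicity of the single-site Gibbs sampler for $\mu$ or $\mu'$ (those conditions compare $\mu_i$ against $\mu'_i$, not $\mu_i$ against itself), and even if the joint laws did converge along a subsequence, passing the disagreement probabilities and the recursion to the limit requires lower-semicontinuity and a continuity property of the state-dependent maximal coupling that is not automatic. The paper avoids all of this with one observation you missed: if you \emph{initialize} with $X_0\sim\mu$ and $X_0'\sim\mu'$ independently, then because a random-scan Gibbs update leaves $\mu$ (resp.\ $\mu'$) invariant, $X_k\sim\mu$ and $X_k'\sim\mu'$ \emph{exactly}, for every finite $k$. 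So each $(X_k,X_k')$ is already a coupling of $\mu$ and $\mu'$, no convergence or tightness is needed, and one simply takes the coordinatewise $\limsup$ of the finite-dimensional vectors $\ell_k$ (just numbers in $[0,1]^n$) to close the recursion, then uses $\tv(\nu,\nu')\le\sum_{i\in A}\ell_{k,i}$ for each $k$. Replacing your convergence/subsequence paragraph with this stationarity observation makes the argument correct and much cleaner.
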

\begin{proof}
Let $X_0$ and $X_0'$ be independent $\Omega^n$-valued random vectors with laws $\mu$ and $\mu'$ respectively. We will now inductively define a Markov chain $(X_k,X_k')_{k\ge 0}$. Given $(X_k, X_k')=(x,y)$ for some $k$, generate $(X_{k+1}, X'_{k+1})$ as follows. Choose a coordinate $J$ uniformly at random from $\{1,\ldots,n\}$. Generate a pair $(W, W')$  of $\Omega$-valued random variables such that $W$ has law $\mu_J(\cdot|\bar{x}^J)$ and $W'$ has law $\mu'_J(\cdot|\bar{y}^J)$, but their joint distribution is such that
\[
\pp(W\ne W') = \tv(\mu_J(\cdot|\bar{x}^J), \mu'_J(\cdot|\bar{y}^J)).
\]
(This can be done because the infimum is attained in the coupling characterization~\eqref{tvcoup} of total variation distance.) 
Having obtained $W$ and $W'$, produce $X_{k+1}$ by replacing the $J^{\mathrm{th}}$ coordinate of $X_k$ by $W$ (keeping all other coordinates the same) and produce $X_{k+1}'$ similarly using $W'$. It is not hard to see that with this construction, $X_k$ has law $\mu$ and $X_k'$ has law $\mu'$ for every~$k$. 

Let $X_{k,i}$ denote the $i^{\mathrm{th}}$ coordinate of $X_k$. Then the above construction and \eqref{tvcond} imply that for any $i$, 
\begin{align*}
&\pp(X_{k+1,i}\ne X'_{k+1,i} |X_k, \xp_k) \\
&\le \biggl(1-\frac{1}{n}\biggr) 1_{\{X_{k,i}\ne X'_{k,i}\}} + \frac{1}{n} \sum_{j=1}^n \alpha_{ij} 1_{\{X_{k,j} \ne X'_{k,j}\}} + \frac{h_i}{n}.
\end{align*}
Now let $\ell_k$ denote the vector in $\rr^n$ whose $i^{\mathrm{th}}$ coordinate is $\pp(X_{k,i}\ne \xp_{k,i})$. From the above inequality, it follows that 
\[
\ell_{k+1} \le \biggl(1-\frac{1}{n}\biggr) \ell_k + \frac{1}{n} Q\ell_k + \frac{1}{n}h, 
\]
where the inequality means that each coordinate of the vector on the left is dominated by the corresponding coordinate on the right. 
Let 
\[
\ell := \limsup_{k\ra \infty} \ell_k,
\]
 where the  $\limsup$ is taken coordinatewise. By the above inequality, it follows that
\[
\ell \le \biggl(1-\frac{1}{n}\biggr) \ell + \frac{1}{n}Q\ell + \frac{1}{n}h,
\]
which simplifies to
\begin{equation}\label{iqeq}
(I-Q)\ell \le h,
\end{equation}
where $I$ is the $n\times n$ identity matrix.
The condition $Q\le sP$ ensures that $Q$ has spectral norm less than $1$. Therefore,  $I-Q$ is invertible and 
\[
(I-Q)^{-1} = \sum_{k=0}^\infty Q^k.
\]
Moreover, this matrix has nonnegative entries. Hence, by \eqref{iqeq},
\[
\ell \le \sum_{k=0}^\infty Q^k h \le \sum_{k=0}^\infty s^k P^k h.
\]
Let $\{Y_k\}_{k\ge 0}$ be a Markov chain on $\{1,\ldots, n\}$ with transition kernel $P$. If $p_{ij}^{(k)}$ stands for the $(i,j)^{\mathrm{th}}$ entry of $P^k$, then
\begin{align*}
\sum_{k=0}^\infty s^k p_{ij}^{(k)} &= \sum_{k=0}^\infty \ee(s^k 1_{\{Y_k = j\}} |Y_0 = i) \\
&\le \ee\biggl(\sum_{k=\tau_{ij}}^\infty s^k\biggr) = \frac{\ee(s^{\tau_{ij}})}{1-s}. 
\end{align*}
The proof is now easily completed by recalling the coupling characterization~\eqref{tvcoup} of total variation distance. 
\end{proof}

\subsection{Correlation decay at strong coupling}
Consider Ising lattice gauge theory in the dual cube $*B_N$ at inverse coupling strength $\lambda$, under zero boundary condition. Let $\mu$ be the probability measure on $*\Sigma_N$ defined by this theory.  Let $B$ be a subcube of $B_N$. Consider Ising lattice gauge theory on $*B$ at inverse coupling strength $\lambda$, under zero boundary condition. The probability measure defined by this theory can be extended to a probability measure $\mu'$ on $*\Sigma_N$ by defining the spins outside $*B$ to be all equal to $1$. We will now prove the following result by  applying Theorem \ref{decaythm} to the pair of measures $(\mu, \mu')$. 
\begin{thm}\label{decay1}
Let all notation be as above. There are positive universal constants $\lambda_0$, and $C_0$ such that the following holds when $\lambda \le \lambda_0$. Let $A$ be any set of internal edges of $*B$. Suppose that all the vertices of all the edges of $A$ are at a distance at least $l$ from the boundary of $*B$. Let $j$ be the width of $*B$. Let $\nu$ be the law of $(\sigma_e)_{e\in A}$ under $\mu$, and let $\nu'$ be the law of $(\sigma_e)_{e\in A}$ under $\mu'$. Then
\[
\tv(\nu,\nu')\le j^3(C_0\lambda)^{C_0l}.
\]
\end{thm}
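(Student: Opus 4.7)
The plan is to apply Theorem 4.1 to the pair $(\mu,\mu')$, viewed as probability measures on the same product space $*\Sigma_N$, with $\mu'$ understood as being extended by setting $\sigma_e = 1$ for every edge $e$ of $*B_N$ that does not lie strictly inside $*B$. For any edge $e$ and configuration $\bar{x}^e$ of the remaining spins, the conditional law $\mu_e(\sigma_e = 1 \mid \bar{x}^e)$ equals the logistic expression $(1 + e^{-2\lambda S_\mu(\bar{x}^e)})^{-1}$, where $S_\mu(\bar{x}^e) := \sum_{p \ni e}\prod_{e' \in p\setminus\{e\}}x_{e'}$ is summed over plaquettes of $*B_N$ containing $e$. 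The analogous formula holds for $\mu_e'$, with the sum restricted to plaquettes of $*B$ (and $\mu_e' = \delta_1$ whenever $e$ is one of the pinned edges).

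To verify Dobrushin's condition, observe that flipping a single $x_j$ changes $S_\mu$ (or $S_{\mu'}$) by $\pm 2$ if $j$ shares a plaquette with $i$ and by $0$ otherwise. Since the derivative of $S \mapsto (1 + e^{-2\lambda S})^{-1}$ is bounded by $\lambda/2$, this yields $\alpha_{ij} \le \lambda$ for such neighbors and $\alpha_{ij} = 0$ elsewhere. In $\zz^4$ each edge belongs to exactly $6$ plaquettes, each containing $3$ other edges, so $s := \max_i \sum_j \alpha_{ij} \le 18\lambda$, and taking $\lambda_0 = 1/36$ forces $s \le 1/2 < 1$. The quantity $h_i = \tv(\mu_i(\cdot \mid \bar{x}^i), \mu_i'(\cdot \mid \bar{x}^i))$ is controlled by $|S_\mu - S_{\mu'}|$, which equals the absolute value of a sum over plaquettes through $i$ that belong to $*B_N \setminus *B$: hence $h_i = 0$ when every plaquette through $i$ lies in $*B$ (i.e.\ when $i$ is inside $*B$ at distance strictly greater than $1$ from $\partial(*B)$), $h_i \le 3\lambda$ for $i$ inside $*B$ within distance $1$ of $\partial(*B)$, and $h_i \le 1$ trivially for all remaining $i$.

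For the Markov kernel required by Theorem 4.1, take $P_{ij} := \alpha_{ij}/s$ for $j \ne i$ and $P_{ii}$ a self-loop chosen to make $P$ stochastic. Since $P$ only moves between shared-plaquette neighbors and each such step shifts the edge midpoint by at most a universal constant in lattice distance $d(\cdot,\cdot)$, the hitting time satisfies $\tau_{ij} \ge c\,d(i,j)$ for a universal $c > 0$, so $\ee(s^{\tau_{ij}}) \le s^{c\,d(i,j)}$. Theorem 4.1 then gives
\[
\tv(\nu,\nu') \le \frac{1}{1-s}\sum_{i \in A}\sum_j s^{c\,d(i,j)} h_j.
\]
For every $i \in A$ and every $j$ with $h_j \ne 0$ we have $d(i,j) \ge l - 1$, strengthening to $d(i,j) \ge l + r - 1$ when $j$ is outside $*B$ at distance $r$ from it. Using $|A| \le O(j^4)$, the bound $O(r^3)$ on the number of lattice sites in a shell of radius $r$ in $\zz^4$, the $O(j^3)$ count of near-boundary $j$'s with $h_j \le 3\lambda$, and the estimate $\sum_{r \ge 0} O((j+r)^3) s^{c(l+r)}$ on the contribution from $j$'s outside $*B$, one arrives at $\tv(\nu,\nu') \le \mathrm{poly}(j,l)(18\lambda)^{cl}$. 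The main technical obstacle is this final step: absorbing the polynomial factors in $j$ and $l$ (coming from the size of $A$ and the shell counts on both sides of $\partial(*B)$) into the exponential decay $(18\lambda)^{c'l}$ for a slightly smaller exponent $c' < c$, so that the estimate collapses to $j^3(C_0\lambda)^{C_0 l}$. For values of $l$ too small to support such absorption, the stated bound exceeds $1$ and the inequality holds trivially.
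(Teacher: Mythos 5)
Your strategy is the same as the paper's (apply Theorem \ref{decaythm} with $\alpha_{ef}\le C\lambda$ for shared-plaquette neighbors, $h_e=0$ inside and $h_e\le 1$ near/outside $\partial(*B)$, then bound $\ee(s^{\tau_{ij}})$ via the exponential decay of a $18L_0\lambda$-contraction walk). However, your final summation step has a genuine gap that cannot be repaired by the ``absorb the polynomial factors'' remark. You bound $\sum_{i\in A}$ by $|A|=O(j^4)$, which produces a total factor of order $j^7 s^{cl}$. This cannot be dominated by $j^3(C_0\lambda)^{C_0 l}$ for a \emph{universal} $\lambda_0$: with $l$ fixed and $j\to\infty$, the left side grows like $j^7$ while the right side grows like $j^3$ until it saturates at $1$, and making the ratio $\le 1$ forces $\lambda_0\to 0$ as $j\to\infty$. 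The trivial-when-$\ge 1$ escape only covers the regime $j\gtrsim (C_0\lambda)^{-C_0/3}$, leaving an unbounded range of $j$ uncovered. The correct way to carry out the sum is to \emph{not} use $|A|$: for a fixed non-internal $j$ at distance $k$ from $\partial(*B)$, write $\sum_{i\in A}\ee(s^{\tau_{ij}})\le\sum_{r\ge l+k} O(r^3)\,s^{L_1 r}=O\bigl((l+k)^3\,s^{L_1(l+k)}\bigr)$, a shell count in $r$ that is independent of $j$. Multiplying by the $O((j+k)^3)$ count of such $j$'s and summing $k$ gives $O\bigl(j^3\, l^3\, s^{L_1 l}\bigr)$, and the $l^3$ factor is now safely absorbed into a slightly weaker exponential rate. (The paper's own write-up is terse on this point and essentially suppresses the $\sum_{i\in A}$, but its stated $j^3$ bound is what this shell argument produces.)

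A secondary inaccuracy: for every internal edge $e$ of $*B$, all six plaquettes of $*B_N$ containing $e$ are automatically plaquettes of $*B$ (an internal edge has an interior endpoint, and it follows from the definitions that none of its plaquettes can have a vertex outside $*B$, nor be a boundary plaquette of $*B$). Hence $S_\mu=S_{\mu'}$ and $h_e=0$ exactly for all internal $e$, not just those at distance $>1$ from $\partial(*B)$; your intermediate category $h_i\le 3\lambda$ is unnecessary. This one does not invalidate the argument, but the $|A|$ issue above does, and it is precisely the step you flag as ``the main technical obstacle.''
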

\begin{proof}
Take any internal edge $e$ of $*B$. Define the set of neighbors $N(e)$ of~$e$ to be the set of all edges that belong to any one of the plaquettes containing~$e$. There are at most $18$ such neighbors. Following the conventions of the previous subsection, let
\[
\bar{\sigma}_e := (\sigma_{f})_{f\ne e} 
\] 
and let $\mu_e(\cdot|\bar{\sigma}_e)$ be the conditional law of $\sigma_e$ given $\bar{\sigma}_e$ under $\mu$. Define $\mu'_e$ similarly. Then it is not difficult to write down the conditional distributions explicitly and verify  that there is a universal constant $L_0$ such that 
\[
\tv(\mu_e(\cdot|\bar{\sigma}_e), \mu_e'(\cdot|\bar{\sigma}_e')) \le L_0\lambda
\]
for any $\bar{\sigma}_e$ and $\bar{\sigma}_e'$. Moreover, if $\sigma_f=\sigma'_f$ for every $f\in N(e)$, then the above distance is zero. Thus, if we take 
\[
\alpha_{ef} = 
\begin{cases}
L_0\lambda &\text{ if } f\in N(e),\\
0 &\text{ otherwise,}
\end{cases}
\]
and $h_e=0$, then the condition \eqref{tvcond} holds for $i=e$.

On the other hand, if $e$ is an edge of $*B_N$ but not an internal edge of $*B$, then we have the trivial inequality
\[
\tv(\mu_e(\cdot|\bar{\sigma}_e), \mu_e'(\cdot|\bar{\sigma}_e')) \le 1,
\]
which allows us to take $\alpha_{ef}=0$ for all $f$ and $h_e=1$. 

Let $Q= (\alpha_{ef})_{e,f\in *E_N}$, where $*E_N$ is the set of edges of $*B_N$.  With the above definition of $\alpha_{ef}$, it is clear that if we take $P$ to be transition kernel of the simple random walk on $*E_N$ where the jumps happen from an edge to one of its neighbors as defined above, then $Q\le sP$, where 
\[
s := 18L_0\lambda.
\]
Choose $\lambda_0$ so small that $s\le 1/2$ when $\lambda \le \lambda_0$. 
Let $A$ be as in the statement of the theorem. Then note that there is a universal constant $L_1$ such that if we start the above random walk from an edge in $A$,  it must take at least $L_1(l+k)$ steps to reach an edge that is not an internal edge of $*B$ and at least one of whose vertices is at a distance exactly $k$ from the boundary of $*B$. There are at most $L_2(j+k)^3$ such edges, where $L_2$ is another universal constant. By Theorem \ref{decaythm}, this shows that
\begin{align*}
\tv(\nu,\nu') &\le \frac{1}{1-s}\sum_{k=0}^\infty s^{L_1(k+l)}L_2(j+k)^3\\
&\le 16L_2s^{L_1l}\sum_{k=0}^\infty s^{L_1k}(j^3+k^3)\\
&\le 16L_2 (18L_0\lambda)^{L_1l} \sum_{k=0}^\infty 2^{-L_1k} (j^3 + k^3).
\end{align*}
This completes the proof of the theorem.
\end{proof}

\subsection{Correlation decay at weak coupling}
Let $B_N$ and $B$ be as in the previous subsection. As before, let $\smallavg{\cdot}_{N, \beta}$ denote expectation with respect to Ising lattice gauge theory on $B_N$ at inverse coupling strength $\beta$ and free boundary condition. Let  $\smallavg{\cdot}_{B, \beta}$ denote expectation with respect to Ising lattice gauge theory on $B$ at inverse coupling strength $\beta$ and free boundary condition. Then we have the following result.
\begin{thm}\label{decay2}
Let all notation be as above. There are positive universal constants $\beta_0$  and $C_0$ such that the following holds when $\beta \ge \beta_0$. 
Let $P$ be a set of plaquettes of $B$, whose vertices are at a distance at least $l$ from the boundary of $B$. Let $j$ be the width of $B$. Let $f$ be a any function of $(\sigma_p)_{p\in P}$. Then 
\begin{equation}\label{decay2bd}
|\smallavg{f}_{N,\beta} - \smallavg{f}_{B,\beta}|\le C_f j^3 (C_0 e^{-2\beta})^{C_0l},
\end{equation}
where $C_f$ is a constant that depends only on $f$ (and implicitly on $P$, since $f$ is a function on $P$). 
\end{thm}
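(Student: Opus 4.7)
The strategy is to use duality (Theorem \ref{exppropthm}) to transport the problem into the dual, strong-coupling regime and then invoke the Dobrushin-type decay result Theorem \ref{decay1}. The main obstacle is purely bookkeeping: translating the geometric and analytic hypotheses through the Hodge star and combining the constants.

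First I would reduce to the case where $f$ is a monomial in the plaquette variables. Each $\sigma_p$ is $\pm 1$-valued, so any function of $(\sigma_p)_{p \in P}$ admits a unique Walsh expansion
\[
f = \sum_{Q \subseteq P} c_Q \, \pi_Q,
\]
with $|c_Q| \le \|f\|_\infty$ and at most $2^{|P|}$ nonzero terms. Proving \eqref{decay2bd} for each $\pi_Q$ with a constant independent of $\beta$ therefore yields the general case, with $C_f$ depending only on $\|f\|_\infty$ and $|P|$.

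Next, I would apply Theorem \ref{exppropthm} to both $\smallavg{\pi_Q}_{N,\beta}$ and $\smallavg{\pi_Q}_{B,\beta}$, obtaining
\[
\smallavg{\pi_Q}_{N,\beta} = \smallavg{e^{-2\lambda \psi_{*Q}}}^*_{N,\lambda}, \qquad \smallavg{\pi_Q}_{B,\beta} = \smallavg{e^{-2\lambda \psi_{*Q}}}^*_{B,\lambda},
\]
where on both sides we have zero boundary condition Ising lattice gauge theory (on $*B_N$ and $*B$ respectively) at inverse coupling $\lambda = -\tfrac{1}{2}\log\tanh\beta$. For $\beta$ large, $\lambda \le 2 e^{-2\beta}$, so we are indeed in the strong-coupling regime of the dual theory, and Theorem \ref{decay1} is applicable.

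The key step is then to set $A := \bigcup_{p \in *Q} \text{edges}(p)$ and check that every vertex of every edge in $A$ lies at distance at least $l - C$ from the boundary of $*B$, for some absolute constant $C$ arising from the half-integer shift between primal and dual vertices and the unit length of an edge. This is immediate from the hypothesis that the vertices of $P$ are at distance at least $l$ from $\partial B$: the vertices of $*p$ sit at the centers of $n$-cells incident to $p$, and the edges of $*p$ traverse one unit between such centers. Having verified this, Theorem \ref{decay1} gives
\[
\tv(\nu_{*B_N}, \nu_{*B}) \le j^3 (C_0 \lambda)^{C_0 (l-C)},
\]
where $\nu_{*B_N}, \nu_{*B}$ are the laws of $(\sigma_e)_{e \in A}$ under the two dual theories. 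Since $e^{-2\lambda \psi_{*Q}}$ is a function of $(\sigma_e)_{e \in A}$ bounded in absolute value by $e^{2\lambda |Q|} \le e^{2\lambda|P|} = O(1)$ for $\beta \ge \beta_0$, the total variation bound translates into the same bound (up to a universal factor) on the difference of expectations.

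Finally I would substitute $\lambda \le 2 e^{-2\beta}$, absorb the universal constant $C$ in the exponent by enlarging $C_0$ and shrinking $\beta_0$, and sum over $Q \subseteq P$ to obtain \eqref{decay2bd} with $C_f$ depending only on $\|f\|_\infty$ and $|P|$. The only subtle point in the argument is the geometric verification in the penultimate step, and even there the loss is a universal additive constant in the effective value of $l$.
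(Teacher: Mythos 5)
Your proposal follows essentially the same route as the paper's proof: Fourier--Walsh reduction to monomials, applying Theorem \ref{exppropthm} to transfer both expectations to the dual strong-coupling model, invoking Theorem \ref{decay1}, and then converting $\lambda$ to $e^{-2\beta}$. The only difference is that you spell out the geometric verification (that the dual edges of $*Q$ remain far from $\partial(*B)$) and the summation over $Q$ more explicitly than the paper, which glosses over both; these are cosmetic rather than substantive differences.
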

\begin{proof}
By the Fourier--Walsh expansion for functions of binary variables (see \cite[Chapter 1]{odonnell}), any function of $(\sigma_p)_{p\in P}$ can be written as a linear combination of $(\pi_Q)_{Q\subseteq P}$. Thus, it suffices to prove the theorem for $f = \pi_P$. By Theorem \ref{exppropthm}, 
\[
\smallavg{\pi_P}_{N,\beta} = \smallavg{e^{-2\lambda\psi_{*P}}}^*_{N,\lambda},
\]
where $\lambda$ is related to $\beta$ as in \eqref{lambdadef}. Similarly, if $\smallavg{\cdot}^*_{B,\lambda}$ denotes expectation with respect to Ising lattice gauge theory on $*B$ at inverse coupling strength $\lambda$ and zero boundary condition, then 
\[
\smallavg{\pi_P}_{B,\beta} = \smallavg{e^{-2\lambda\psi_{*P}}}^*_{B,\lambda}.
\]
Let $\lambda_0$ be as in Theorem \ref{decay1}, and choose $\beta_0$ such that $\beta\ge \beta_0$ if and only if $\lambda \le \lambda_0$. Suppose that $\beta \ge \beta_0$. Then by Theorem \ref{decay1}, 
\[
|\smallavg{e^{-2\lambda\psi_{*P}}}^*_{N,\lambda} - \smallavg{e^{-2\lambda\psi_{*P}}}^*_{B,\lambda}| \le e^{2\lambda_0|P|} j^3 (C_0\lambda)^{C_0l}.
\]
By the definition of $\lambda$, it follows that $\lambda \le Ce^{-2\beta}$ for some universal constant $C$. Plugging this into the above inequality, we get the desired result. 
\end{proof}
\subsection{Existence of the infinite volume limit}\label{infproofsec}
In this subsection, we will prove the existence of the limit \eqref{wilsonexplim}. We will first prove the following stronger result, and then deduce the existence of the limit as a corollary.
\begin{thm}\label{infthm2}
There is some $\beta_0$ such that whenever $\beta\ge \beta_0$, the following holds. Let $P$ be any finite set of plaquettes, and let $f$ be a function of $(\sigma_p)_{p\in P}$. Then $\smallavg{f}_{N,\beta}$ converges to a limit as $N\to \infty$. Moreover, this limit is translation invariant, in the sense that if $g$ is the composition of a translation of the lattice followed by $f$, then $\smallavg{g}_{N,\beta}$ converges to the same limit as $N\to\infty$. 
\end{thm}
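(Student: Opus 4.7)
The plan is to derive both claims from the weak-coupling correlation decay estimate in Theorem \ref{decay2}; no new tools beyond the triangle inequality and the translation symmetry of the free-boundary measure on a single cube are required.

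\textbf{Cauchy property.} First I would fix $P$ finite and $f$ a function of $(\sigma_p)_{p\in P}$, and show that $\{\smallavg{f}_{N,\beta}\}$ is Cauchy. Choose $N_0$ large enough that every vertex of every plaquette in $P$ lies in $B_{N_0}$, and for $N\ge N_0$ let $l_N$ denote the distance from those vertices to the boundary of $B_N$; then $l_N\ge N-C_P$ for a constant $C_P$ depending only on $P$. For any $M\ge N\ge N_0$, applying Theorem \ref{decay2} in the ambient cube $B_M$ with the role of $B$ played by $B_N$ itself (so $j=2N+1$ and $l=l_N$) gives
\[
|\smallavg{f}_{M,\beta} - \smallavg{f}_{N,\beta}| \le C_f (2N+1)^3 (C_0 e^{-2\beta})^{C_0 l_N}.
\]
Choosing $\beta_0$ large enough that $C_0 e^{-2\beta}\le 1/2$ for $\beta\ge \beta_0$, the right-hand side decays faster than any polynomial in $N$ and tends to $0$ uniformly in $M\ge N$. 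Hence the sequence is Cauchy and the limit $\smallavg{f}_\beta := \lim_N \smallavg{f}_{N,\beta}$ exists.

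\textbf{Translation invariance.} Let $T$ denote translation of $\zz^4$ by a fixed lattice vector $v$ and set $g(\sigma):=f(T\sigma)$, a function of the plaquette variables in $T^{-1}P$. For $N\gg |v|$, I would choose an intermediate subcube $B\subset B_N$ of width $\asymp N/2$ containing $P$ at distance $\asymp N/4$ from the boundary of $B$, so that the translate $B':=T^{-1}B$ also lies in $B_N$ and contains $T^{-1}P$ at comparable distance from its own boundary. Because the free-boundary Ising lattice gauge theory depends only on the combinatorial cube structure, translation by $T$ induces a measure-preserving bijection between the measures on $B$ and on $B'$ that carries $f$ to $g$, so
\[
\smallavg{f}_{B,\beta} = \smallavg{g}_{B',\beta}.
\]
Two applications of Theorem \ref{decay2} (one for $(f,B)$ and one for $(g,B')$ inside $B_N$) bound $|\smallavg{f}_{N,\beta}-\smallavg{f}_{B,\beta}|$ and $|\smallavg{g}_{N,\beta}-\smallavg{g}_{B',\beta}|$ by terms of the same form as above, each tending to $0$. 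The triangle inequality then yields $|\smallavg{f}_{N,\beta}-\smallavg{g}_{N,\beta}|\to 0$, which combined with the Cauchy property for both $f$ and $g$ gives $\smallavg{g}_\beta=\smallavg{f}_\beta$.

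The only real obstacle is the geometric bookkeeping in the translation step: one must arrange $B$ and $B'=T^{-1}B$ simultaneously inside $B_N$ with both $P$ and $T^{-1}P$ deep in their interiors, so that Theorem \ref{decay2} applies to both pairs with an $l$ that grows with $N$. Once this setup is in place, everything else is a routine application of the exponential decay and of the manifest translation equivariance of the single-cube free-boundary measure.
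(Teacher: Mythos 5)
Your proposal is correct and follows essentially the same route as the paper: both rest entirely on Theorem \ref{decay2} (for the Cauchy property) plus the manifest translation equivariance of the free-boundary single-cube measure (for the translation step). The only cosmetic difference is that the paper first passes to the limit in \eqref{decay2bd} to get a bound on $|\alpha - \smallavg{f}_{B,\beta}|$ and then invokes $\smallavg{f}_{B,\beta}=\smallavg{g}_{B',\beta}$ as $B$ grows, whereas you compare $\smallavg{f}_{N,\beta}$ and $\smallavg{g}_{N,\beta}$ at finite $N$ through a pair of intermediate cubes; these are the same argument in a slightly different order.
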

\begin{proof}
Let $B_N$ and $B$ be as in Theorem \ref{decay2}, so that we get the bound~\eqref{decay2bd}. Replacing $N$ be a larger number $N'$, we get the same bound, because the bound does not depend on $N$. Thus, the difference between $\smallavg{f}_{N,\beta}$ and $\smallavg{f}_{N',\beta}$ is bounded by twice the quantity on the right side of \eqref{decay2bd}. Since we can choose a large $B$ if $N$ and $N'$ are large, this shows that the sequence $(\smallavg{f}_{N,\beta})_{N\ge 1}$ is a Cauchy sequence. This proves the existence of the limit. Let $\alpha$ denote the limit. To prove translation invariance, simply revisit \eqref{decay2bd} and send $N$ to infinity. This shows that
\[
|\alpha - \smallavg{f}_{B,\beta}|\le C_f  j^3 (C_0 e^{-2\beta})^{C_0l},
\]
where $j$ is the width of $B$. 
Therefore, as $B$ increases in size, $\smallavg{f}_{B,\beta}$ tends to $\alpha$. 
Let $g$ be the composition of a translation of the lattice followed by $f$. Let $B'$ be the same translation applied to the cube $B$. Then clearly,
\[
\smallavg{f}_{B,\beta} = \smallavg{g}_{B',\beta}.
\]
As $B$ increases in size, so does $B'$, and both quantities in the above display converge to their infinite volume limits. This completes the proof of translation invariance. 
\end{proof}
\begin{cor}\label{infcor}
There exists $\beta_0>0$ such that if $\beta \ge \beta_0$, then for any generalized loop $\gamma$ in $\zz^4$, the limit $\smallavg{W_\gamma}_\beta := \lim_{N\to\infty} \smallavg{W_\gamma}_{N,\beta}$ 
exists. Moreover, the limit is translation invariant, in the sense that the Wilson loop variable for a translate of $\gamma$ will have the same limiting expected value.
\end{cor}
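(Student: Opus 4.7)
The plan is to reduce the corollary to Theorem \ref{infthm2} by expressing the Wilson loop variable as a function of plaquette spins. The key observation, which I would record at the start, is that although $W_\gamma = \prod_{e\in\gamma}\sigma_e$ is defined as a function of edge spins, it can be rewritten as a product of plaquette variables whenever $\gamma$ bounds a surface. By Lemma \ref{looplmm}, since $\gamma$ is a generalized loop in $\zz^4$, there exists a finite surface $P$ with boundary $\gamma$. I would then verify the identity
\[
W_\gamma = \prod_{e\in\gamma}\sigma_e = \prod_{p\in P}\sigma_p = \pi_P,
\]
which follows because $\prod_{p\in P}\sigma_p = \prod_{e}\sigma_e^{n(e)}$ where $n(e)$ counts how many plaquettes of $P$ contain $e$; by definition of the boundary, $n(e)$ is odd exactly when $e\in\gamma$, so modulo $2$ (which is all that matters for $\pm 1$ spins) the only surviving factors are those edges in $\gamma$.

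Once this identity is in hand, $W_\gamma$ is a function of finitely many plaquette spins, namely $(\sigma_p)_{p\in P}$. I would then invoke Theorem \ref{infthm2} with $f = \pi_P$ to conclude that $\smallavg{W_\gamma}_{N,\beta} = \smallavg{\pi_P}_{N,\beta}$ converges as $N\to\infty$ for all $\beta\ge\beta_0$, which gives the existence of $\smallavg{W_\gamma}_\beta$.

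For translation invariance, let $\tau$ be any lattice translation and let $\gamma' = \tau(\gamma)$. Then $W_{\gamma'}$ equals the composition of $\tau$ followed by $W_\gamma$ (viewed as functions of the spin configuration). Equivalently, if $P$ is a surface bounding $\gamma$, then $\tau(P)$ is a surface bounding $\gamma'$, and $W_{\gamma'} = \pi_{\tau(P)}$ is simply the translate of $\pi_P$. Applying the translation-invariance clause of Theorem \ref{infthm2} to $f = \pi_P$ and $g = \pi_{\tau(P)}$ yields that $\smallavg{W_{\gamma'}}_{N,\beta}$ converges to the same limit as $\smallavg{W_\gamma}_{N,\beta}$, completing the proof.

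There is essentially no obstacle here beyond setting up the correspondence correctly; the only point requiring care is confirming that $\prod_{p\in P}\sigma_p = \prod_{e\in\partial P}\sigma_e$ in the $\zz_2$-valued setting, which is just the discrete Stokes-type identity underlying the $1$-form/$2$-form correspondence of Section \ref{strucsec}. All the real work has already been done in Theorem \ref{infthm2} via the correlation-decay estimate of Theorem \ref{decay2}.
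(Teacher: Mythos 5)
Your proof is correct and follows essentially the same route as the paper: invoke Lemma \ref{looplmm} to obtain a finite surface $P$ with $\partial P = \gamma$, observe that $W_\gamma = \prod_{p\in P}\sigma_p$, and then apply Theorem \ref{infthm2}. The paper states this more tersely but the argument, including the translation-invariance step, is the same.
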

\begin{proof}
By Lemma \ref{looplmm}, any generalized loop $\gamma$ is the boundary of a surface $P$ of finite size. Consequently, 
\[
W_\gamma = \prod_{p\in P} \sigma_p.
\]
The proof is now easily completed using Theorem \ref{infthm2}.
\end{proof}

\section{Distribution of vortices}\label{vorsec}
In this section, we will make some deductions about the distribution of vortices. Throughout this section, we will fix a $\beta \ge \beta_0$, where $\beta_0$ is as in Theorem \ref{decay2}. We will also fix an infinite volume limit $\mu_\beta$ of Ising lattice gauge theory on $\zz^4$ with free boundary condition at inverse coupling strength $\beta$, obtained by taking a subsequential limit of the probability measures on $\Sigma_N$ as $N\to\infty$. 

To be more precise, let $\Sigma$ be the set of all spin configurations on the edges of $\zz^4$. Equip $\Sigma$ with the product topology and the cylinder $\sigma$-algebra generated by this topology. Under the product topology, $\Sigma$ is a compact metric space. Consequently, any sequence of probability measures on $\Sigma$ has a subsequential weak limit. 
Consider now Ising lattice gauge theory with free boundary condition on $B_N$ at inverse coupling strength $\beta$. The probability measure on $\Sigma_N$ defined by this theory can be lifted to a probability measure on $\Sigma$ by fixing the spins outside $B_N$ to be all equal to $1$. The compactness argument given above allows us to extract a subsequential limit of this sequence of probability measures, which we call $\mu_\beta$.

By Theorem \ref{decay2}, we know that for a function of finitely many plaquettes, the expected value under $\mu_\beta$ does not depend on our choice of $\mu_\beta$. This fact will be invoked implicitly on several occasions. 

\subsection{Rarity of negative plaquettes}
Given a configuration $\sigma$, recall that we call a plaquette $p$ negative if $\sigma_p = -1$. When $\beta$ is large, one may expect that negative plaquettes are rare. The following theorem gives a quantitative estimate for this.
\begin{thm}\label{rarity}
Let $P$ be any finite set of plaquettes. There is a constant $C_P$ depending only on $P$ such that under $\mu_\beta$, the probability of the event that $\sigma_p=-1$ for all $p\in P$ is bounded above by $C_Pe^{-2\beta|P|}$. 
\end{thm}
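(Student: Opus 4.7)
The plan is to convert $\mu_\beta(A_P)$, where $A_P := \{\sigma_p=-1 \text{ for every } p\in P\}$, into a single dual-side expectation of a product of small quantities, via Theorem~\ref{exppropthm}. The starting point is the pointwise identity
\[
\mathbf{1}_{A_P}(\sigma) \;=\; \prod_{p\in P}\frac{1-\sigma_p}{2} \;=\; \frac{1}{2^{|P|}}\sum_{Q\subseteq P}(-1)^{|Q|}\pi_Q(\sigma).
\]
Fix $N$ large enough that $P\subseteq B_N$ and take $\smallavg{\cdot}_{N,\beta}$ of both sides. Applying Theorem~\ref{exppropthm} to each of the finitely many terms $\smallavg{\pi_Q}_{N,\beta}=\smallavg{e^{-2\lambda\psi_{*Q}}}^*_{N,\lambda}$, then pulling the finite signed sum inside the dual-side expectation, the factorization $e^{-2\lambda\psi_{*Q}}=\prod_{q\in Q}e^{-2\lambda\sigma_{*q}}$ collapses the sum into a product:
\[
\mu_{N,\beta}(A_P) \;=\; \frac{1}{2^{|P|}}\smallavg{\,\prod_{p\in P}\bigl(1-e^{-2\lambda\sigma_{*p}}\bigr)\,}^*_{N,\lambda},
\]
with $\lambda$ given by \eqref{lambdadef}.

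Next I would estimate the integrand pointwise. Since $\sigma_{*p}\in\{-1,1\}$, the factor $1-e^{-2\lambda\sigma_{*p}}$ equals either $1-e^{-2\lambda}$ or $1-e^{2\lambda}$, and both have absolute value at most $2\lambda e^{2\lambda}$. For $\beta$ large enough (so $\lambda$ is small) this is bounded by $C_0\lambda$ for a universal constant $C_0$. Taking absolute values inside the expectation and using nonnegativity of $\mu_{N,\beta}(A_P)$ yields
\[
\mu_{N,\beta}(A_P) \;\le\; \Bigl(\tfrac{C_0\lambda}{2}\Bigr)^{|P|}.
\]
The defining relation $\lambda=-\tfrac12\log\tanh\beta$ gives $\lambda\le C_1 e^{-2\beta}$ for $\beta\ge\beta_0$, so the right-hand side has the form $C_P e^{-2\beta|P|}$ with $C_P=(C_0C_1/2)^{|P|}$, which depends only on $P$.

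Finally, each $\pi_Q$ is a function of finitely many plaquettes, so by Theorem~\ref{infthm2}, $\smallavg{\pi_Q}_{N,\beta}\to\smallavg{\pi_Q}_\beta$ as $N\to\infty$; summing the $2^{|P|}$ terms in the expansion of $\mathbf{1}_{A_P}$ shows that $\mu_{N,\beta}(A_P)$ itself converges, so the subsequential limit $\mu_\beta(A_P)$ inherits the same bound. There is no real obstacle here beyond writing down the correct identity: once the indicator is expanded in the $\pi_Q$ basis and duality is applied, the dual-side quantity automatically factorizes into a product of $O(e^{-2\beta})$ terms, so no geometric input (vortex structure, Peierls contours, etc.) is needed. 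The only point requiring a moment of care is that the raw signed expansion has $2^{|P|}$ terms with a priori large cancellations; this is disposed of precisely because the dual-side exponential is multiplicative in $Q$, letting one recombine the signed sum into a single product before estimating.
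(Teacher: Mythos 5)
Your proof is correct, and it takes a genuinely different route from the paper's. The paper first invokes the correlation-decay estimate (Theorem~\ref{decay2}) to replace $\mu_\beta$ by the finite-volume measure on a cube $B$ whose size is determined by $|P|$, and then runs a simple energy/partition-function counting argument there: the all-plus configuration forces $Z_B(\beta)\ge e^{\beta|P(B)|}$, while any configuration with $P$ all negative contributes at most $e^{\beta(|P(B)|-2|P|)}$, and the number of configurations on $B$ is an absolute constant depending only on $P$. Your argument instead stays entirely on the dual side: you expand $\mathbf{1}_{A_P}$ in the Fourier--Walsh basis, push each $\pi_Q$ through Theorem~\ref{exppropthm}, and exploit the multiplicativity $e^{-2\lambda\psi_{*Q}}=\prod_{p\in Q}e^{-2\lambda\sigma_{*p}}$ to recollapse the signed sum into a single product $\prod_{p\in P}(1-e^{-2\lambda\sigma_{*p}})$, which is then bounded pointwise by $(C_0\lambda)^{|P|}$. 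Your approach buys a cleaner, more algebraic bound that sidesteps the Peierls-type counting and does not need Theorem~\ref{decay2} in the main estimate at all (only via Theorem~\ref{infthm2} to identify the $N\to\infty$ limit, which one would invoke under either approach). The paper's approach is conceptually closer to standard statistical-mechanics reasoning and does not require observing the multiplicative cancellation in the Fourier expansion, but at the cost of routing through the decay-of-correlations machinery. Both yield a constant $C_P$ depending only on $|P|$.

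One minor point worth being explicit about: after taking absolute values inside the dual expectation, the nonnegativity of $\mu_{N,\beta}(A_P)$ is indeed the correct justification, since the integrand $\prod_{p\in P}(1-e^{-2\lambda\sigma_{*p}})$ changes sign with the configuration; you handle this correctly, but a careless reader might miss that the left-hand side is a probability, which is what licenses inserting the absolute value.
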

\begin{proof}
Throughout this proof, $C_P$ will denote any constant that depends only on $P$.  The value of $C_P$ may change from line to line. 

Let $\rho$ be the probability of the stated event under $\mu_\beta$. Recall the universal constant $C_0$ from Theorem \ref{decay2}. Take a cube  $B$ containing $P$, whose width is the smallest required to ensure that the distance of any vertex of any plaquette in $P$ to the boundary of $B$ is at least $|P|/C_0$. Let $\rho'$ be the probability of the stated event in the Ising lattice gauge theory on $B$ at inverse coupling strength $\beta$ and free boundary condition. Then by Theorem~\ref{decay2},
\begin{equation}\label{rhorho}
|\rho-\rho'|\le C_P e^{-2\beta |P|}. 
\end{equation}
Let $Z_B(\beta)$ be the normalizing constant for the Ising lattice gauge theory on $B$. The configuration of all plus spins contributes $e^{\beta |P(B)|}$ to this normalizing constant, where $P(B)$ is the set of plaquettes of $B$. Thus, 
\[
Z_B(\beta)\ge e^{\beta|P(B)|}. 
\]
On the other hand, any configuration that has $\sigma_p=-1$ for all $p\in P$ contributes at most $e^{\beta(|P(B)|-2|P|)}$. The number of such configurations depends only on $B$. Since the width of $B$ is determined by $P$, the number of such configurations depends only on $P$. Combining these observations, we get
\[
\rho'\le C_P e^{-2\beta |P|}. 
\]
Combined with \eqref{rhorho}, this completes the proof.
\end{proof}
\subsection{Rarity of large vortices}
An important corollary of the Theorem \ref{rarity} is the following, which shows that large vortices are rare. Incidentally, it also shows that with probability one, there are no infinite vortices.
\begin{cor}\label{vorcor}
For any plaquette $p$, the probability under $\mu_\beta$ that $p$ is contained in a vortex of size $\ge m$ is bounded above by $C_m e^{-2\beta m}$, where $C_m$ is a constant that depends only on $m$. 
\end{cor}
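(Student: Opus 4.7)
The plan is a Peierls-type symmetry argument controlling each possible vortex-shape individually, followed by a lattice-animal union bound. The key estimate is
$$
\mu_\beta\bigl(\sigma_p = -1 \text{ for every } p\in V\bigr) \le e^{-2\beta|V|}
$$
for every finite set of plaquettes $V$ whose dual $*V$ is closed. (The bound of Theorem~\ref{rarity} is not strong enough here, since its constant $C_P$ grows rapidly with $|P|$.)

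To prove this estimate, observe that by Lemma~\ref{dlmm3} the closedness of $*V$ on the dual lattice is equivalent to $df_V = 0$ on the primal lattice, where $f_V$ is the $\zz_2$-valued $2$-form indicating $V$. Working in a finite cube $B \subseteq B_N$ whose interior contains $V$, the form $f_V$ vanishes on the boundary $2$-cells of $B$, so the last clause of Lemma~\ref{dlmm2} yields a $\zz_2$-valued $1$-form $g_V$ on $B$ with $dg_V = f_V$ and $g_V \equiv 0$ on the boundary of $B$. Extend $g_V$ by zero to all edges of $B_N$ and identify $\zz_2$ with $\{\pm1\}$ via $0 \leftrightarrow 1$, $1 \leftrightarrow -1$. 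The edgewise product $\sigma \mapsto \sigma' := \sigma \cdot g_V$ is then an involution on $\Sigma_N$; a direct computation of plaquette products gives $\sigma'_p = (-1)^{f_V(p)}\sigma_p$, so this map flips precisely the plaquettes of $V$. On the event $\{V \subseteq \text{negative plaquettes}\}$ the image therefore lies in the disjoint event $\{V \subseteq \text{positive plaquettes}\}$, and $H_N(\sigma') = H_N(\sigma) - 2|V|$, so the Boltzmann weight gains a factor of $e^{2\beta|V|}$ along each orbit. This yields $\mu_{N,\beta}(V\subseteq\text{neg})\le e^{-2\beta|V|}$ in finite volume, and the bound is local so it persists in the subsequential limit $\mu_\beta$.

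To conclude, by Lemma~\ref{vorlmm} the vortex containing $p$, when it exists, is unique, so if $p$ lies in a vortex of size at least $m$, that vortex is itself a vortex-shape $V\ni p$ with $|V|\ge m$ satisfying $V\subseteq\text{negative plaquettes}$. A union bound then gives
$$
\mu_\beta(p \text{ in vortex of size} \ge m) \le \sum_{k\ge m} N(k,p)\, e^{-2\beta k},
$$
where $N(k,p)$ counts vortex-shapes of size $k$ containing $p$. Each such shape is, via duality, a connected set of $k$ plaquettes in the dual plaquette-adjacency graph, which has bounded degree on $\zz^4$, so the standard lattice-animal estimate gives $N(k,p) \le c^k$ for some universal $c$. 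For $\beta$ large enough that $ce^{-2\beta} \le 1/2$, the geometric series then sums to at most $2c^m e^{-2\beta m}$, giving the claimed bound $C_m e^{-2\beta m}$. The main obstacle is the Peierls step — specifically, producing the flipping $1$-form $g_V$ with the correct boundary vanishing via the last clause of Lemma~\ref{dlmm2}, so that its extension by zero satisfies $dg_V = f_V$ globally on $\zz^4$ and the involution on $\Sigma_N$ is well-defined for every $N$ large enough.
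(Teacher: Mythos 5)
Your proof is correct, but it takes a genuinely different route from the paper's. The paper's proof is a finite union bound via Theorem~\ref{rarity}: since $*P$ is connected, a vortex of size $\ge m$ containing $p$ must contain some $m$ plaquettes all lying within a distance $C_m$ of $p$ (with $C_m$ depending only on $m$); for each of the finitely many $m$-element subsets $Q$ in that ball, Theorem~\ref{rarity} with $|Q|=m$ \emph{fixed} gives $\mu_\beta(Q\subseteq\text{neg})\le C_Q e^{-2\beta m}$, and summing over $Q$ finishes. So the growth of $C_P$ in $|P|$ that you flagged never enters --- the theorem is only ever applied at size exactly $m$, and that is exactly strong enough. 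What you do instead is bypass Theorem~\ref{rarity} entirely with a Peierls/contour estimate $\mu_\beta(V\subseteq\text{neg})\le e^{-2\beta|V|}$ for any shape $V$ with $*V$ closed, obtained by building a flipping $1$-form $g_V$ from the last clause of Lemma~\ref{dlmm2}; the construction is sound (in particular the zero extension of $g_V$ does give $dg_V=f_V$ on all of $B_N$, because any plaquette containing an internal edge of $B$ lies in $B$, and the event is local so the finite-volume bound passes to $\mu_\beta$), and you then do a lattice-animal union bound over all closed-dual connected shapes of size $\ge m$ through $p$. Your route is more self-contained --- it sidesteps the duality and Dobrushin machinery that underlies Theorem~\ref{rarity} --- and gives a cleaner constant-free Peierls bound, though it only applies to closed-dual shapes. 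The one caveat is that your geometric series $\sum_{k\ge m} c^k e^{-2\beta k}$ sums to $C_m e^{-2\beta m}$ only once $c e^{-2\beta}<1$, so you may need to enlarge the $\beta_0$ of the section beyond what Theorem~\ref{decay2} requires; the paper's argument has no such additional threshold. Both are valid proofs.
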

\begin{proof}
Suppose that $p$ is contained in a vortex $P$ of size $\ge m$. Since $*P$ is a connected set of plaquettes, it follows that there are at least $m$ negative plaquettes within a distance $C_m'$ from $p$, where $C_m'$ depends only on $m$. Let $Q$ be any collection of $m$ plaquettes within this distance from $p$. By Theorem \ref{rarity}, the chance that these plaquettes are all negative is bounded by $C_Qe^{-2\beta m}$, where $C_Q$ depends only on $Q$. The proof is completed by summing over all choices of $Q$.
\end{proof}
A consequence of Corollary \ref{vorcor} is the following result, which shows that there are no large vortices is small regions.
\begin{cor}\label{vorcor2}
Under $\mu_\beta$, the chance that there is a vortex of size $\ge m$ intersecting a set of plaquettes of size $j$ is bounded above by $C_mje^{-2\beta m}$, where $C_m$ depends only on $m$. 
\end{cor}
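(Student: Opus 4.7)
The plan is to deduce this directly from Corollary \ref{vorcor} by a union bound over the plaquettes in the given set. The event in question is that there exists some vortex $P$ with $|P|\ge m$ such that $P\cap S \ne \emptyset$, where $S$ is the prescribed set of $j$ plaquettes.

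First I would observe that if such a vortex $P$ exists, then at least one plaquette $p \in S$ must be contained in a vortex of size $\ge m$ (namely, $p$ is contained in $P$ itself). Hence the event under consideration is a subset of
\[
\bigcup_{p \in S} \{p \text{ is contained in a vortex of size } \ge m\}.
\]
Then I would apply a union bound over the $j$ plaquettes in $S$, and invoke Corollary \ref{vorcor} for each $p \in S$, which bounds each individual probability by $C_m e^{-2\beta m}$ for a constant $C_m$ depending only on $m$. This yields a total bound of $j \cdot C_m e^{-2\beta m}$, which is exactly what is claimed (absorbing the factor into a possibly larger $C_m$).

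There is essentially no obstacle here; the only subtle point is making sure that "a vortex intersects $S$" really does force one of the plaquettes of $S$ to lie in a vortex of size $\ge m$, which is immediate from the definition of intersection. This step is a routine consequence of the preceding corollary, so the proof should be quite short.
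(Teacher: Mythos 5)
Your proposal is correct and matches the paper's own proof, which is also just a union bound over the $j$ plaquettes combined with Corollary \ref{vorcor}. Nothing further is needed.
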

\begin{proof}
This is just a union bound,  applying Corollary \ref{vorcor} to each plaquette and summing over all plaquettes.
\end{proof}

\section{Proof of the main result}\label{proofsec}
Throughout this subsection, we will fix some $\beta \ge \beta_0$ as in Theorem \ref{decay2} and an infinite volume Gibbs measure $\mu_\beta$ as in Section \ref{vorsec}. We will use the notation $\mu_\beta(A)$ to denote the probability of an event $A$ under $\mu_\beta$, and $\mu_\beta(X)$ to denote the expected value of a random variable $X$ under $\mu_\beta$. Also, throughout, $C$ will denote any universal constant. The value of $C$ may change from line to line or even within a line.

The proof of Theorem \ref{genthm} is divided into two lemmas, to deal with the cases of short loops and long loops separately. The first lemma gives a bound that is useful when $\ell$ is of order $e^{12\beta}$ or less.
\begin{lmm}\label{genlmm1}
In the setting of Theorem \ref{genthm}, we have
\[
|\smallavg{W_\gamma}_\beta -e^{-2\ell e^{-12\beta}}|\le Ce^{C\ell e^{-12\beta}}\biggl(e^{-2\beta}+\sqrt{\frac{\ell_0}{\ell}}\biggr).
\]
\end{lmm}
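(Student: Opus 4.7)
The plan is to rewrite $W_\gamma$ as a signed product over vortices, argue that only minimal ($6$-plaquette) vortices contribute to leading order, and then carry out a Poisson-type computation over the edges of $\gamma$.

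By Lemma~\ref{looplmm}, choose a finite surface $Q$ with $\partial Q = \gamma$, so that
\[
W_\gamma \;=\; \prod_{p\in Q}\sigma_p \;=\; (-1)^{|Q\cap N|},
\]
where $N$ is the set of negative plaquettes in $\sigma$. By Lemma~\ref{vorlmm}, $N$ is a disjoint union of vortices $V$, hence $W_\gamma = \prod_V(-1)^{|Q\cap V|}$. In $\zz^4$ the smallest closed connected dual surface is the boundary of a unit $3$-cube ($6$ plaquettes); since plaquettes are self-dual in dimension four, the corresponding primal minimal vortex is precisely the set $V_e$ of six plaquettes containing a fixed primal edge $e$. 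A direct counting then gives $|Q\cap V_e| = \#\{p\in Q : p\ni e\}$, which is odd iff $e\in\partial Q = \gamma$. Thus minimal vortices flip the sign of $W_\gamma$ only along edges of $\gamma$. Setting $Y_e := 1_{\{V_e \text{ occurs}\}}$, this yields
\[
W_\gamma \;=\; \prod_{e\in\gamma}(1-2Y_e) \;+\; R,
\]
where $R$ captures contributions from vortices of size $\ge 8$ meeting $Q$; by Corollary~\ref{vorcor2} applied to a neighbourhood of $Q$, $|\smallavg{R}_\beta|\le C\ell e^{-C\beta}$, which will be absorbed into the announced error.

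It remains to estimate $\smallavg{\prod_{e\in\gamma}(1-2Y_e)}_\beta$. Put $\ve := e^{-12\beta}$. By Theorem~\ref{rarity}, together with the duality Theorem~\ref{exppropthm} and the exponential correlation decay Theorem~\ref{decay2}, one obtains $\pp(Y_e=1) = \ve\,(1+O(e^{-2\beta}))$ and, more generally, for $e_1,\dots,e_k\in\gamma$ with pairwise disjoint minimal vortices,
\[
\smallavg{Y_{e_1}\cdots Y_{e_k}}_\beta \;=\; \ve^k\bigl(1+O(k\,e^{-2\beta})\bigr).
\]
Expanding $\prod_{e\in\gamma}(1-2Y_e) = \sum_{S\subseteq\gamma}(-2)^{|S|}\prod_{e\in S}Y_e$, the contribution from subsets $S$ whose $V_e$'s are pairwise disjoint matches the binomial $(1-2\ve)^\ell$, which differs from $e^{-2\ell\ve}$ by at most $Ce^{-2\ell\ve}\cdot\ell\ve^2$. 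The remaining error has two sources: (i) the $O(e^{-2\beta})$ multiplicative corrections to each $\smallavg{Y_{e_1}\cdots Y_{e_k}}_\beta$, which after the $k$-fold combinatorial summation yield a net relative error of order $e^{C\ell\ve}e^{-2\beta}$; and (ii) terms involving pairs $(e,e')$ of corner-adjacent edges, for which $V_e\cap V_{e'}\neq\emptyset$ and the factorisation breaks down, of which there are at most $C\ell_0$. A Cauchy--Schwarz / second-moment estimate on the signed expansion controls these corner contributions by $e^{C\ell\ve}\sqrt{\ell_0/\ell}$, and combining all four error sources gives the stated bound.

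The main obstacle I anticipate is the quantitative handling of the corner-pair contributions in (ii): extracting exactly the $\sqrt{\ell_0/\ell}$ rate (rather than a cruder $\ell_0/\ell$ or $\ell_0 e^{-C\beta}$) requires applying Cauchy--Schwarz at the level of the full signed sum so that the cancellations among the non-corner terms are preserved, rather than bounding corner-pair correlations one at a time.
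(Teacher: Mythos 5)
Your broad outline — reduce to minimal vortices supported on edges of $\gamma$, then do a Poisson-type computation — matches the spirit of the paper's argument (and the parity identity $|Q\cap V_e|$ odd iff $e\in\gamma$ is indeed the paper's Lemma \ref{evenlmm} combined with the boundary-edge observation). But there are two genuine gaps at exactly the points where the hard work lies.

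First, the joint-moment claim $\smallavg{Y_{e_1}\cdots Y_{e_k}}_\beta = \ve^k(1+O(k\,e^{-2\beta}))$ for pairwise disjoint minimal vortices is asserted, not proved, and nothing in Theorems~\ref{rarity}, \ref{exppropthm} or \ref{decay2} hands it to you: those give one-point upper bounds or coupling estimates, not sharp multiplicative factorization with a controllable relative error uniformly in $k$ up to $k\sim\ell$. The paper sidesteps this entirely by a conditioning trick you don't use: condition on all spins except those on the non-corner edges $\gamma_1$ of $\gamma$. Because no two non-corner edges share a plaquette, the spins $(\sigma_e)_{e\in\gamma_1}$ become \emph{exactly} independent under the conditional measure, and the conditional probability that $P(e)$ is entirely negative is \emph{exactly} $e^{-6\beta}/(e^{6\beta}+e^{-6\beta})$ when the six neighbouring plaquettes (minus $e$) are all of one sign, and exactly $0$ otherwise. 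This yields the exact conditional identity $\mu_\beta'(W_\gamma^6)=\theta^{|\gamma_1\setminus\gamma'|}$ with $\theta=\tanh 6\beta$, where $\gamma'$ is the (random, but measurable w.r.t.\ the conditioning) set of "bad" non-corner edges. Your cluster expansion tries to reconstruct this factorization approximately with uncontrolled error; the conditioning gives it for free.

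Second, and more seriously, the $\sqrt{\ell_0/\ell}$ rate is exactly the point you flag as uncertain, and the Cauchy--Schwarz route you gesture at is not how the paper gets it. The paper's mechanism is quite different: having reduced to $\theta^{|\gamma_1|}\mu_\beta(\theta^{-|\gamma'|})$, one shows $\mu_\beta(|\gamma'|)\le C\alpha$ by the vortex-size bound, then estimates $|\mu_\beta(\theta^{-|\gamma'|})-1|$ by truncating at $|\gamma'|\le j$, bounding the tail by Markov's inequality and the main term by $\theta^{-j}-1$, and optimizing at $j=\sqrt{\ell}$ to obtain $Ce^{C\alpha}/\sqrt{\ell}$. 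Combined with the elementary inequality $\ell^{-1}\le \ell_0/\ell$ (a nonempty generalized loop always has a corner), this produces the $\sqrt{\ell_0/\ell}$. There is no second-moment/Cauchy--Schwarz step on a signed cluster expansion anywhere; I don't see how your proposed version would give the stated rate rather than something like $\ell_0\ve$ or $\ell_0/\ell$. You would need to supply that computation, and as written the plan for it is not convincing. A smaller issue: your remainder $R$ hides the careful vortex-size thresholds (the paper uses size $\ge 25$ relative to the whole cube, then size $\ge 7$ near $\gamma$) and the geometric fact from Lemma~\ref{evenlmm} that vortices far from $\gamma$ intersect $Q$ evenly; these details are needed to get the $e^{-2\beta}$ factor, not just $e^{-C\beta}$.
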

The proof of the above lemma is somewhat lengthy, so it is divided into a number of steps. The main idea is to reduce the Wilson loop variable $W_\gamma$ to something more manageable. We will successively reduce $W_\gamma$ to the variables $W_\gamma^0, W_\gamma^3,W_\gamma^4, W_\gamma^5,W_\gamma^6$ of increasing simplicity, such that the variables are all equal to each other with high probability. The simplest variable, $W_\gamma^6$, has the form $(-1)^{N_6}$ where $N_6$ is an approximately binomial random variable with mean $\ell e^{-12\beta}$. This is the key fact that allows us to show that the expected value of $W_\gamma$ is approximately $e^{-2\ell e^{-12\beta}}$. To satisfy the reader's curiosity, let us briefly mention right away that $N_6$ is the number of non-corner edges $e\in \gamma$ such that each plaquette containing $e$ is negative.


Let us define the variables $W_\gamma^0, W_\gamma^3,W_\gamma^4, W_\gamma^5,W_\gamma^6$ before proving anything about them. Since $\gamma$ has length $\ell$, there is a cube $B$ of width $\ell$ that contains $\gamma$.  By Lemma~\ref{looplmm}, there is a surface $Q$ inside $B$ such that $\gamma$ is the boundary of $Q$. Fix $Q$ and $B$. For simplicity of notation, let
\[
\alpha := \ell e^{-12\beta}, \ \  r := \frac{\ell_0}{\ell}.
\]
Let $\sigma$ be a random configuration drawn from the Gibbs measure $\mu_\beta$. Let $V$ be the set of vortices of $\sigma$ that intersect $Q$. Let 
\[
N := \sum_{P\in V} |P\cap Q|.
\]
Let $V_0$ be the set of all members of $V$ that have size $\le 24$. Let
\[
N_0 := \sum_{P\in V_0} |P\cap Q|,
\]
and define 
\[
W_\gamma^0 := (-1)^{N_0}. 
\]
Let $b$ be the smallest number such that any vortex of size $\le 24$ is contained in a cube of width $b$. By the definition of vortex, $b$ is a finite universal constant. Let $Q'$ be the set of plaquettes $p\in Q$ that are so far away from $\gamma$ that any cube of width $b+2$ containing $p$ does not intersect $\gamma$. Note that $Q'$ may be empty. Let $V_1$ be the set of all $P\in V_0$ that intersect $Q'$. Let
\[
N_1 := \sum_{P\in V_1} |P\cap Q|,
\]
with the usual convention that an empty sum equals zero. Let $V_2 := V_0\setminus V_1$ be the set of members of $V_0$ that do not intersect $Q'$, and let 
\[
N_2 := \sum_{P\in V_2}|P\cap Q| =N_0-N_1.
\]
Geometrically, $Q\setminus Q'$ is the part of $Q$ that is `close to the boundary $\gamma$'. Thus, $N_2$ counts the number of negative plaquettes coming from vortices (of size $\le 25$) that are close to the boundary. 

Let us denote the set of plaquettes containing an edge $e$ by $P(e)$.
It is easy to see that the smallest possible closed connected surface is a set of plaquettes bounding a $3$-cell. If $R$ is such a surface, bounding a $3$-cell $c$, then $*R = P(e)$, where the edge $e$ is the dual of  $c$.  Thus, the smallest possible size of a vortex is $6$, and any such vortex must be $P(e)$ for some $e$. Such a vortex will be called a minimal vortex. Let $V_3$ be the set of minimal vortices that intersect $Q$ but not $Q'$, and let
\[
N_3:=\sum_{P\in V_3} |P\cap Q|.
\]
Define
\[
W_\gamma^3 := (-1)^{N_3}.
\]
Next, let $V_4$ be the set of all members of $V_3$ that are $P(e)$ for some $e\in \gamma$. Let
\[
N_4 := \sum_{P\in V_4} |P\cap Q|,
\]
and let
\[
W_\gamma^4 := (-1)^{N_4}.
\]
Let $V_5$ be the set of all members of $V_4$ that are $P(e)$ for some non-corner edge $e$. Let
\[
N_5 := \sum_{P\in V_5} |P\cap Q|
\]
and define
\[
W_\gamma^5 := (-1)^{N_5}. 
\]
Finally, let $N_6$ be the number of non-corner edges $e\in \gamma$ such that all members of $P(e)$ are negative plaquettes (we will henceforth abbreviate this condition as `$P(e)$ is negative'), and let 
\[
W_\gamma^6 := (-1)^{N_6}. 
\]
We will now prove a sequence of simple lemmas to show that $W_\gamma$ is equal to $W_\gamma^6$ with high probability. The proof will proceed roughly as follows. We will begin with the observation that $W_\gamma = (-1)^N$. We will then show that with high probability, no vortex of size $\ge 25$ intersects $Q$. This implies that with high probability, $N=N_0$ and hence $W_\gamma=W_\gamma^0$. The next step is a crucial geometric argument, based on Lemma \ref{evenlmm}, showing that $N_1 = N_0-N_2$ is even. This implies that $W_\gamma^0=(-1)^{N_2}$. Next, we will prove that with high probability, only minimal vortices intersect $Q$ but not $Q'$. This will imply that with high probability, $N_2 = N_3$, and hence $W_\gamma^0 = W_\gamma^3$. After this, we will prove that $N_3 - N_4$ is even, which implies that $W_\gamma^3 = W_\gamma^4$. The final steps will involve showing that $W_\gamma^4 = W_\gamma^5 = W_\gamma^6$ with high probability, which happens because the fraction of corner edges is small, and with high probability, only minimal vortices intersect $Q$ but not $Q'$.
\begin{lmm}\label{a1problmm}
Let $A_1$ be the event that there is no vortex of size $\ge 25$ in the configuration $\sigma$ that intersects $B$. Then
\[
\mu_\beta(A_1)\ge 1-C\alpha^4 e^{-2\beta}.
\]
\end{lmm}
\begin{proof}
By Corollary \ref{vorcor2}, $\mu_\beta(A_1)\ge 1-C\ell^4 e^{-50\beta}$. But $\ell^4 e^{-50\beta} = \alpha^4 e^{-2\beta}$. 
\end{proof}
\begin{lmm}\label{ww0lmm} 
$\mu_\beta(|W_\gamma-W_\gamma^0|) \le C\alpha^4e^{-2\beta}$.
\end{lmm}
\begin{proof}
By Lemma \ref{vorlmm}, $N$ is the number of negative plaquettes of $Q$, and hence
\[
W_\gamma = \prod_{p\in Q} \sigma_p = (-1)^{N}.
\]
If the event $A_1$ happens, then $N_0=N$. Therefore, by Lemma~\ref{a1problmm}, 
\[
\mu_\beta(|W_\gamma-W_\gamma^0|) \le 2(1-\mu_\beta(A_1))\le C\alpha^4e^{-2\beta},
\]
which completes the proof.
\end{proof}

\begin{lmm}\label{wn2lmm}
$W_\gamma^0 = (-1)^{N_2}$. 
\end{lmm}
\begin{proof}
Take any $P\in V_1$.  Then by the definition of $Q'$ and $V_1$, it follows that any cube $B$ of width $b$ that contains $P$ has the property that ${*{*B}}\cap Q$ contains only internal plaquettes of $Q$.  Therefore by Lemma \ref{evenlmm}, $|P\cap  Q|$ is even. Consequently, $N_1$ is even. Since $N_2 = N_0-N_1$ and $W_\gamma^0=(-1)^{N_0}$, this completes the proof.
\end{proof}

\begin{lmm}\label{a2ineqlmm}
Let $A_2$ be the event that no vortex of size $\ge 7$ intersects $Q\setminus Q'$. Then $\mu_\beta(A_2)\ge 1-C\alpha e^{-2\beta}$. 
\end{lmm}
\begin{proof}
Since each plaquette of $Q\setminus Q'$ is contained in a cube of width $b+2$ which intersects $\gamma$, it follows that
\[
|Q\setminus Q'|\le C\ell.
\]
Therefore by Corollary \ref{vorcor2}, $\mu_\beta(A_2)\ge 1-C\ell e^{-14\beta} \ge 1-C\alpha e^{-2\beta}$. 
\end{proof}

\begin{lmm}\label{w0w3lmm}
$\mu_\beta(|W_\gamma^0-W_\gamma^3|) \le C\alpha e^{-2\beta}$.
\end{lmm}
\begin{proof}
If the event $A_2$ happens, then $V_3 = V_2$ and hence $N_3=N_2$. Therefore by Lemma~\ref{wn2lmm}, $A_2$ implies that $W_\gamma^0=W_\gamma^3$. Consequently, by Lemma~\ref{a2ineqlmm},  $\mu_\beta(|W_\gamma^0-W_\gamma^3|) \le 2(1-\mu_\beta(A_2))\le C\alpha e^{-2\beta}$.
\end{proof}

\begin{lmm}\label{w3w4lmm}
$W_\gamma^3 = W_\gamma^4$.
\end{lmm}
\begin{proof}
For any $e$, $P(e)\cap Q$ is the number of plaquettes of $Q$ that contain $e$. In particular, if $e$ is an internal edge of $Q$, then $|P(e)\cap Q|$ is even by the definition of internal edge in Section \ref{strucsec}, since $e$ is contained in an even number of plaquettes of $Q$ and this set is exactly $P(e)\cap Q$. Thus, $N_3-N_4$ is even, and hence $W_\gamma^3 = W_\gamma^4$.
\end{proof}

\begin{lmm}\label{w4w5lmm}
$\mu_\beta(|W_\gamma^4-W_\gamma^5|) \le Cr\alpha$.
\end{lmm}
\begin{proof}
Let $A_3$ be the event that there is no corner edge $e$ such that $P(e)$ is a  vortex.  
By Corollary \ref{vorcor2}, 
\[
\mu_\beta(A_3)\ge 1- C\ell_0e^{-12\beta}.
\]
If $A_3$ happens, then $N_4 = N_5$, and hence 
\[
\mu_\beta(|W_\gamma^4-W_\gamma^5|) \le 2(1-\mu_\beta(A_3)) \le C\ell_0e^{-12\beta} \le Cr\alpha.
\]
This completes the proof of the lemma.
\end{proof}

\begin{lmm}\label{w5lmm}
$W_\gamma^5 = (-1)^{|V_5|}$.
\end{lmm}
\begin{proof}
Notice that if $P\in V_5$, then $P\cap Q$ is odd, and hence $(-1)^{|P\cap Q|} = -1$. 
Thus,
\[
W_\gamma^5 = (-1)^{N_5}= \prod_{P\in V_5} (-1)^{|P\cap Q|} = (-1)^{|V_5|},
\]
completing the proof.
\end{proof}

\begin{lmm}\label{w5w6lmm}
$\mu_\beta(|W_\gamma^5-W_\gamma^6|) \le C\alpha e^{-2\beta}$.
\end{lmm}
\begin{proof}
If the event $A_2$ (from Lemma \ref{a2ineqlmm}) happens, then $N_6=|V_5|$. Therefore by Lemmas~\ref{a2ineqlmm} and~\ref{w5lmm}, $\mu_\beta(|W_\gamma^5-W_\gamma^6|) \le 2(1-\mu_\beta(A_2))\le C\alpha e^{-2\beta}$.
\end{proof}
We will now calculate $\mu_\beta(W_\gamma^6)$ to first order. The key intuition in the following calculation is that $N_6$ is approximately a binomial random variable with mean $\ell e^{-12\beta}$. 
\begin{lmm}\label{w5thetalmm}
Let $\theta := \tanh 6\beta$. Then 
\[
|\mu_\beta(W_\gamma^6) -\theta^{\ell}| \le \frac{Ce^{C\alpha}}{\sqrt{\ell}} + Cre^{C\alpha}. 
\]
\end{lmm}
\begin{proof}
Let $\gamma_1$ be the set of all non-corner edges of $\gamma$, and let $\gamma'$ be the set of edges $e\in \gamma_1$ such that there is at least one positive plaquette and at least one negative plaquette in $P(e)$. In other words, there are some $p,p'\in P(e)$ such that $\sigma_p\sigma_{p'}=-1$. But the product $\sigma_p \sigma_{p'}$ has no dependence on $\sigma_e$. Thus, we can determine whether $e\in \gamma'$ without knowing the value of $\sigma_e$, as long as we know the value of $\sigma_f$ for every edge $f\ne e$ that belongs to some $p \in P(e)$. Since a non-corner edge of $\gamma$ does not share a a plaquette with any other edge of $\gamma$, this implies that we can determine the set $\gamma'$ simply by knowing the values of $\sigma_e$ for all $e\not\in \gamma_1$. 

Let $\mu_\beta'$ denote conditional probability and conditional expectation given  $(\sigma_e)_{e\not\in \gamma_1}$. Since no two non-corner edges belong to the same plaquette, it follows that under this conditioning, $(\sigma_e)_{e\in \gamma_1}$ are independent spins. Moreover, the above paragraph shows that conditioning on the spins outside $\gamma_1$ determines $\gamma'$. 

If $e\in \gamma_1\setminus \gamma'$, a simple calculation gives
\[
\mu_\beta'(\text{$P(e)$ is negative}) =\frac{e^{-6\beta}}{e^{6\beta} + e^{-6\beta}}. 
\]
On the other hand, if $e\in \gamma'$, then
\[
\mu_\beta'(\text{$P(e)$ is negative}) = 0.
\]
Thus, by the conditional independence of $(\sigma_e)_{e\in \gamma_1}$, 
\eq{
\mu_\beta'(W_\gamma^6)  = \theta^{|\gamma_1\setminus \gamma'|},
}
where $\theta = \tanh 6\beta$. This gives
\begin{equation}\label{semifinal}
\mu_\beta(W_\gamma^6)= \mu_\beta(\mu_\beta'(W_\gamma^6)) = \theta^{|\gamma_1|} \mu_\beta(\theta^{-|\gamma'|}).
\end{equation}
Now,
\begin{equation}\label{theta1}
1-Ce^{-12\beta}\le \theta\le 1.
\end{equation}
This shows that for any $0\le j\le \ell$,
\begin{equation}\label{theta2}
\theta^{-j}\le \theta^{-\ell}\le (1-Ce^{-12\beta})^{-\ell}\le e^{C\ell e^{-12\beta}} \le e^{C\alpha},
\end{equation}
assuming that $\beta_0$ is large enough. Consequently, for any $1\le j\le \ell$,
\begin{align}
\theta^{-j}-1&\le j\theta^{-(j-1)}(\theta^{-1}-1)\nonumber\\
&\le Cje^{C\alpha} e^{-12\beta}\nonumber\\
&\le \frac{Cj\alpha e^{C\alpha}}{\ell}\le \frac{Cje^{C\alpha}}{\ell},\label{theta3}
\end{align}
where we used the inequality $xe^x \le Ce^{2x}$ in the last step. By \eqref{semifinal}, \eqref{theta1} and \eqref{theta3}, we get
\begin{align}
|\mu_\beta(W^6_\gamma)-\theta^{\ell}|&\le |\theta^{|\gamma_1|}\mu_\beta(\theta^{-|\gamma'|}) - \theta^\ell|\nonumber \\
&= |\theta^{|\gamma_1|}(\mu_\beta(\theta^{-|\gamma'|}) -1) + \theta^{\ell-\ell_0}- \theta^\ell|\nonumber \\
&\le\theta^{|\gamma_1|}|\mu_\beta(\theta^{-|\gamma'|}) -1| + \theta^\ell(\theta^{-\ell_0}-1)\nonumber\\
&\le |\mu_\beta(\theta^{-|\gamma'|}) -1| + Cre^{C\alpha}. \label{semi2}
\end{align}
Now recall that any negative plaquette is contained in a vortex  and any vortex has size at least $6$. Therefore by Corollary \ref{vorcor}, the probability that any given plaquette is negative (under $\mu_\beta$) is bounded above by $Ce^{-12\beta}$. Consequently,
\begin{align}\label{fmombd}
\mu_\beta(|\gamma'|) \le C\ell e^{-12\beta} \le C\alpha. 
\end{align}
Thus, for any $j>0$,
\eq{
|\mu_\beta(\theta^{-|\gamma'|}) - \mu_\beta(\theta^{-|\gamma'|}1_{\{|\gamma'|\le j\}})| &= \mu_\beta(\theta^{-|\gamma'|}1_{\{|\gamma'|>j\}})\\
&\le \theta^{-\ell} \mu_\beta(\{|\gamma'|>j\})\\
&\le \theta^{-\ell} \frac{C\alpha}{j}\le \frac{C\alpha e^{C\alpha}}{j}\le \frac{Ce^{C\alpha}}{j},
}
where the last two inequalities follow by \eqref{theta2} and $xe^x\le Ce^{2x}$. 
On the other hand, by \eqref{theta1}, \eqref{theta3} and \eqref{fmombd},  
\begin{align*}
|\mu_\beta(\theta^{-|\gamma'|}1_{\{|\gamma'|\le j\}}) - 1| &\le \mu_\beta((\theta^{-|\gamma'|} - 1)1_{\{|\gamma'|\le j\}}) + \mu_\beta(\{|\gamma'|>j\})\\
&\le  \theta^{-j}-1+ \frac{C\alpha}{j} \le \frac{Cje^{C\alpha}}{\ell}  +\frac{C\alpha}{j}.
\end{align*}
Combining the above inequalities and choosing $j=\sqrt{\ell}$, we get
\[
|\mu_\beta(\theta^{-|\gamma'|})-1|\le \frac{Ce^{C\alpha}}{\sqrt{\ell}}. 
\]
Combining this with \eqref{semi2} gives the desired inequality.
\end{proof}
We are now ready to prove Lemma \ref{genlmm1}. 
\begin{proof}[Proof of Lemma \ref{genlmm1}]
Combining Lemmas~\ref{ww0lmm}, \ref{w0w3lmm}, \ref{w3w4lmm}, \ref{w4w5lmm}, \ref{w5w6lmm} and \ref{w5thetalmm}, we have
\[
|\mu_\beta(W_\gamma)-\theta^\ell|\le C(\alpha + \alpha^4)e^{-2\beta} + Cr\alpha +  \frac{Ce^{C\alpha}}{\sqrt{\ell}} + Cre^{C\alpha}.
\]
Since  $\alpha$ and $\alpha^4$ are bounded by $Ce^{C\alpha}$ and $\ell^{-1}\le r$, this simplifies to
\[
|\mu_\beta(W_\gamma)-\theta^\ell|\le Ce^{C\alpha}(e^{-2\beta} + \sqrt{r}). 
\]
To complete the proof, note that as $\beta \to \infty$,
\eq{
\theta &= \frac{e^{6\beta}-e^{-6\beta}}{e^{6\beta}+e^{-6\beta}}= 1 - \frac{2e^{-12\beta}}{1+e^{-12\beta}} \\
&= 1-2e^{-12\beta} + O(e^{-24\beta})\\
&= e^{-2e^{-12\beta}} + O(e^{-24\beta}).
}
Thus,
\[
|\theta-e^{-2e^{-12\beta}}|\le Ce^{-24\beta}. 
\]
By the inequality $|a^\ell-b^\ell|\le \ell|a-b|$ for $a,b\in [0,1]$, this gives
\eq{
|\theta^\ell - e^{-2\ell e^{-12\beta}}| \le C\ell e^{-24\beta}\le C\alpha e^{-12\beta}. 
}
This completes the proof of Lemma \ref{genlmm1}.
\end{proof}
The next lemma is useful when $\ell \gg e^{12\beta}$.
\begin{lmm}\label{genlmm2}
In the setting of Theorem \ref{genthm}, we have
\[
|\smallavg{W_\gamma}_{\beta}|\le e^{-C(\ell-\ell_0) e^{-12\beta}}. 
\]
\end{lmm}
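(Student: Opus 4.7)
The plan is to bound $|\smallavg{W_\gamma}_\beta|$ by a direct conditional-expectation argument on the non-corner edges of $\gamma$, with no need to invoke vortices at all. Work first at finite volume $\mu_{N,\beta}$ with $N$ large enough that $\gamma \subset B_N$. Let $\gamma_0$ be the set of corner edges of $\gamma$ and $\gamma_1$ the set of non-corner edges, so $|\gamma_1| = \ell - \ell_0$, and condition on the spins $(\sigma_f)_{f \notin \gamma_1}$. The crucial structural fact, already exploited in the proof of Lemma \ref{genlmm1}, is that no two non-corner edges share a plaquette (otherwise each would witness the other and be a corner edge). Consequently each plaquette that contains some $e \in \gamma_1$ contains exactly one such edge, and writing
\[
S_e := \sum_{p \in P(e)}\, \prod_{f \in p,\, f \ne e} \sigma_f
\]
(which depends only on the conditioned spins), the Hamiltonian splits as a function of $\sigma_{\gamma_1^c}$ alone plus $-\sum_{e \in \gamma_1} \sigma_e S_e$. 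The conditional density therefore factorizes over $e \in \gamma_1$, so the spins $(\sigma_e)_{e \in \gamma_1}$ are conditionally independent with $\ee[\sigma_e \mid \sigma_{\gamma_1^c}] = \tanh(\beta S_e)$.

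Next, split $W_\gamma = \prod_{e \in \gamma_0} \sigma_e \cdot \prod_{e \in \gamma_1} \sigma_e$. The first product is measurable with respect to the conditioning and the second factorizes in conditional expectation, so
\[
\ee[W_\gamma \mid \sigma_{\gamma_1^c}] = \prod_{e \in \gamma_0} \sigma_e \cdot \prod_{e \in \gamma_1} \tanh(\beta S_e).
\]
Since $|S_e| \le |P(e)| \le 6$, each factor in the right-hand product is bounded in absolute value by $\theta := \tanh 6\beta$. Taking absolute values and then outer expectation gives $|\smallavg{W_\gamma}_{N,\beta}| \le \theta^{\ell - \ell_0}$, and sending $N \to \infty$ via Corollary \ref{infcor} propagates this to $|\smallavg{W_\gamma}_\beta| \le \theta^{\ell - \ell_0}$.

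To finish, note that $1 - \theta = 2/(e^{12\beta} + 1) \ge e^{-12\beta}$ for $\beta$ larger than a universal constant, so $\theta \le 1 - e^{-12\beta} \le \exp(-e^{-12\beta})$, and therefore
\[
|\smallavg{W_\gamma}_\beta| \le \exp\bigl(-(\ell - \ell_0)\, e^{-12\beta}\bigr),
\]
which is the claimed bound (with $C = 1$, say). I do not anticipate a genuine obstacle: the argument is considerably shorter than that of Lemma \ref{genlmm1} because we only need an upper bound on the magnitude of $\smallavg{W_\gamma}_\beta$ rather than a sharp asymptotic, and can therefore dispense with the vortex decomposition entirely. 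The only point to verify carefully is the factorization of the conditional law, which reduces to the observation that the plaquette neighborhoods $P(e)$ of distinct non-corner edges are disjoint.
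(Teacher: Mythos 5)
Your proof is correct and follows essentially the same route as the paper's: condition on the spins outside the non-corner edges $\gamma_1$, use that distinct non-corner edges share no plaquette to get conditional independence of $(\sigma_e)_{e\in\gamma_1}$ with $\ee[\sigma_e\mid\cdot]=\tanh(\beta S_e)$, bound each factor by $\tanh 6\beta$, and finish with $1-x\le e^{-x}$. The only cosmetic difference is that you run the argument at finite volume and pass to the limit via Corollary~\ref{infcor}, whereas the paper conditions directly under $\mu_\beta$; both are sound.
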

\begin{proof}
As in the proof of Lemma \ref{genlmm1}, let $\gamma_1$ be the set of all non-corner edges of $\gamma$, and let $\mu_\beta'$ denote conditional probability and conditional expectation given  $(\sigma_e)_{e\not\in \gamma_1}$. As observed earlier, under this conditioning, $(\sigma_e)_{e\in \gamma_1}$ are independent spins. 

Take any $e\in \gamma_1$. Then it is easy to see that
\[
\mu_\beta'(\text{$P(e)$ is negative}) = \frac{e^{-m\beta}}{e^{m\beta} + e^{-m\beta}},
\]
where $m$ is an integer between $-6$ and $6$, depending on the spins on the edges of the plaquettes that contain $e$ (other than $e$ itself). Therefore
\begin{align*}
|\mu_\beta'(\sigma_e)| &= \biggl|\frac{e^{m\beta} - e^{-m\beta}}{e^{m\beta} + e^{-m\beta}}\biggr|\\
&= \frac{e^{|m|\beta} - e^{-|m|\beta}}{e^{|m|\beta} + e^{-|m|\beta}} \\
&= 1-\frac{2}{1 + e^{2|m|\beta}}\le 1-\frac{2}{1 + e^{12\beta}}. 
\end{align*}
By conditional independence of the $\sigma_e$'s, this gives
\begin{align*}
|\mu_\beta'(W_\gamma) |&= \prod_{e\in \gamma_1}|\mu_\beta'(\sigma_e)|\le \biggl(1-\frac{2}{1 + e^{12\beta}}\biggr)^{\ell-\ell_0}.
\end{align*}
The proof is now completed by applying the inequality $1-x\le e^{-x}$.
\end{proof}

Finally, we are ready to prove Theorem \ref{genthm} by combining Lemma \ref{genlmm1} and Lemma \ref{genlmm2}. 
\begin{proof}[Proof of Theorem \ref{genthm}]
First, suppose that $\ell_0\le \ell/2$. Then by Lemma \ref{genlmm2}, 
\[
|\smallavg{W_\gamma}_\beta -e^{-2\ell e^{-12\beta}}|\le 2e^{-C_1\ell e^{-12\beta}}
\]
for some universal constant $C_1$. On the other hand, by Lemma \ref{genlmm1}, 
\[
|\smallavg{W_\gamma}_\beta -e^{-2\ell e^{-12\beta}}|\le C_2e^{C_2\ell e^{-12\beta}}\biggl(e^{-2\beta}+\sqrt{\frac{\ell_0}{\ell}}\biggr)
\]
for some other constant $C_2$. Combining these inequalities, we get
\begin{align*}
&|\smallavg{W_\gamma}_\beta -e^{-2\ell e^{-12\beta}}|^{1+C_2/C_1}\\
&\le C_2e^{C_2\ell e^{-12\beta}}\biggl(e^{-2\beta}+\sqrt{\frac{\ell_0}{\ell}}\biggr) (2e^{-C_1\ell e^{-12\beta}})^{C_2/C_1}\\
&= 2^{C_2/C_1}C_2 \biggl(e^{-2\beta}+\sqrt{\frac{\ell_0}{\ell}}\biggr). 
\end{align*}
This proves the claim when $\ell_0\le \ell/2$. If $\ell_0>\ell/2$, then the bound is automatic, since $|\smallavg{W_\gamma}_\beta|\le 1$. 
\end{proof}

\section*{Acknowledgments}
I thank Erik Bates and Jafar Jafarov for carefully checking the proofs, and  Persi Diaconis, Erhard Seiler, Scott Sheffield, Steve Shenker and Edward Witten for valuable conversations and comments. I also thank the anonymous referee for a large number of useful comments.

\end{document}